\newif\ifhavegoodboldmath\havegoodboldmathtrue
\title[Quasi-coherent sheaves and stable homotopy]{The derived category of quasi-coherent sheaves and axiomatic stable homotopy}
\author[L. Alonso]{Leovigildo Alonso Tarr\'{\i}o}
\address[L. A. T.]{Departamento de \'Alxebra\\
Facultade de Matem\'a\-ticas\\
Universidade de Santiago de Compostela\\
E-15782  Santiago de Compostela, Spain}
\email{leoalonso@usc.es}
\author[A. Jerem\'{\i}as]{Ana Jerem\'{\i}as L\'opez}
\address[A. J. L.]{Departamento de \'Alxebra\\
Facultade de Matem\'a\-ticas\\
Universidade de Santiago de Compostela\\
E-15782  Santiago de Compostela, Spain}
\email{jeremias@usc.es}
\author[M. P\'erez]{Marta P\'erez Rodr\'{\i}guez}
\address[M. P. R.]{Departamento de Matem\'a\-ticas\\
Esc. Sup. de Enx. Inform\'atica,
Campus de Ourense\\
Universidade de Vigo\\
E-32004 Ourense, Spain}
\email{martapr@uvigo.es}
\author[M. J. Vale]{Mar\'{\i}a J. Vale Gonsalves}
\address[M. J. V.]{Departamento de \'Alxebra\\
Facultade de Matem\'a\-ticas\\
Universidade de Santiago de Compostela\\
E-15782  Santiago de Compostela, Spain}
\email{alvale@usc.es}
\thanks{This work has been partially supported by 
Spain's MEC and E.U.'s
FEDER research project MTM2005-05754 together with Xunta de Galicia's grant PGIDIT06PXIC207056PN}
\subjclass[2000]{14F99 (primary); 14F05, 18E30 (secondary)}
\date{\today}
\theoremstyle{plain}
\newtheorem{thm}{Theorem}[section]
\newtheorem{lem}[thm]{Lemma}
\newtheorem{cor}[thm]{Corollary}
\newtheorem{prop}[thm]{Proposition}
\theoremstyle{remark}
\newtheorem*{rem}{Remark}
\theoremstyle{definition}
\newtheorem*{defn}{Definition}
\newtheorem*{ex}{Example}
\newtheorem{cosa}[thm]{}
\numberwithin{equation}{thm}
\newcommand{\CA}{\mathcal{A}}
\newcommand{\CE}{{\mathcal E}}
\newcommand{\CF}{{\mathcal F}}
\newcommand{\CG}{{\mathcal G}}
\newcommand{\CH}{{\mathcal H}}
\newcommand{\CI}{{\mathcal I}}
\newcommand{\CJ}{{\mathcal J}}
\newcommand{\CK}{{\mathcal K}}
\newcommand{\CM}{\mathcal{M}}
\newcommand{\CO}{\mathcal{O}}
\newcommand{\CP}{\mathcal{P}}
\newcommand{\CS}{\mathcal{S}}
\newcommand{\FU}{\mathfrak U}
\newcommand{\FV}{\mathfrak V}
\newcommand{\FW}{\mathfrak W}
\newcommand{\FX}{\mathfrak X}
\newcommand{\FY}{\mathfrak Y}
\newcommand{\FZ}{\mathfrak Z}
\newcommand{\SC}{\mathsf{C}}
\newcommand{\CCC}{\boldsymbol{\mathsf{C}}}
\newcommand{\D}{\boldsymbol{\mathsf{D}}}
\newcommand{\K}{\boldsymbol{\mathsf{K}}}
\newcommand{\LL}{\boldsymbol{\mathsf{L}}}
\newcommand{\R}{\boldsymbol{\mathsf{R}}}
\newcommand{\T}{\boldsymbol{\mathsf{T}}}
\newcommand{\A}{\mathsf{A}}
\newcommand{\cc}{\mathsf{c}}
\newcommand{\ts}{\mathsf{t}}
\newcommand{\md}{\text{-}\mathsf{Mod}}
\newcommand{\qc}{\mathsf{qc}}
\newcommand{\qct}{\mathsf{qct}}
\newcommand{\op}{\mathsf{o}}
\newcommand{\ab}{\mathsf{Ab}}
\newcommand{\PR}{\mathbf{P}}
\newcommand{\AF}{\mathbf{A}}
\newcommand{\NN}{\mathbb{N}}
\newcommand{\ZZ}{\mathbb{Z}}
\newcommand{\ia}{{\mathfrak a}}
\newcommand{\dirlim}[1]{\begin{array}[t]{c} {\rm lim}\\[-7.5 pt]
 {\longrightarrow} \\[-7.5 pt] {\scriptstyle {#1}} \end{array}}
\newcommand{\lto}{\longrightarrow}
\newcommand{\xto}{\xrightarrow}
\newcommand{\ot}{\leftarrow}
\newcommand{\lot}{\longleftarrow}
\newcommand{\noqed}{\renewcommand{\qed}{}}
\newcommand{\inc}{\hookrightarrow}
\newcommand{\iso}{\tilde{\to}}
\newcommand{\liso}{\tilde{\lto}}
\newcommand{\losi}{\tilde{\lot}}
\newcommand{\imp}{\Rightarrow}
\newcommand{\st}{\mathsf{Ho}\boldsymbol{\mathsf{Sp}}}
\DeclareMathOperator{\Hom}{Hom}
\DeclareMathOperator{\shom}{\CH\mathit{om}}
\DeclareMathOperator{\rshom}{\R\!\shom}
\DeclareMathOperator{\dhom}{\boldsymbol{\CH}\mathsf{om}}
\DeclareMathOperator{\rhom}{\R{}Hom}
\DeclareMathOperator{\spec}{Spec}
\DeclareMathOperator{\spf}{Spf}
\DeclareMathOperator{\h}{H}
\DeclareMathOperator{\id}{id}
\newcommand{\BL}{{\boldsymbol\Lambda}}
\newcommand{\BG}{{\boldsymbol\Gamma}}
\newcommand{\ie}{{\it i.e.\/} }
\newcommand{\cfr}{{\it cfr.\/} }
\newcommand{\lc}{{\it loc.cit.\/} }
\begin{document}

\begin{abstract} We prove in this paper that for a quasi-compact and semi-separated (non necessarily noetherian) scheme $X$, the derived category of quasi-coherent sheaves over $X$, $\D(\A_\qc(X))$, is a stable homotopy category in the sense of Hovey, Palmieri and Strickland, answering a question posed by Strickland. Moreover we show that it is unital and algebraic. We also prove that for a noetherian semi-separated formal scheme $\FX$, its derived category of sheaves of modules with quasi-coherent torsion homologies $\D_\qct(\FX)$ is a stable homotopy category. It is algebraic but if the formal scheme is not a usual scheme, it is not unital, therefore its abstract nature differs essentially from that of the derived category $\D_\qc(X)$ (which is equivalent to $\D(\A_\qc(X))$) in the case of a usual scheme.
\end{abstract}

\maketitle

\section*{Introduction}
A basic structure that arises in homological algebra and homotopy theory is that of \emph{triangulated category}. Part of its axioms were stated by Puppe \cite{P}, and the crucial octahedral axiom was established by Verdier in his thesis (\cite{vtt}, published only recently). It is interesting to note that only the abridged version \cite{vtc} and an account of the theory in \cite{RD} was available in the sixties and the seventies. Though this structure alone gives a lot of tools and permits the generalization of previous results only made previously explicit in algebraic or topological terms (the book \cite{Ntc} is a nice example), it is very frequent that in real applications triangulated categories come together with a richer structure. Two essential examples that come to mind are $\D(R)$, the derived category of complexes of modules over a commutative ring $R$, and $\st$, the category of (non-connective) spectra up to homotopy. There are parallel constructions in both categories that could be, in principle, transported to other contexts. In this vein, Hovey, Palmieri and Strickland  have defined in \cite{hps} the concept of stable homotopy category. It consists of a list of additional properties and structure for a triangulated category.

The rich theory exposed in \lc\!\!\!, together with the list of examples of categories where these axioms are fulfilled show the great interest of this notion. Let us cite some additional examples: categories of equivariant spectra, categories of local spectra (for generalized homology theories like $K(n)$ or $E(n)$), the homotopy category of modules over a ring spectrum, the stable category of representations of a finite group over a field. This examples are taken from the introduction of \cite{str}. In his paper, Strickland points out a conspicuous case that was left out of this list, namely the ``derived category of quasi-coherent sheaves over a nonaffine scheme'' which we will denote $\D(\A_\qc(X))$ for a scheme $X$. Much to our surprise this all-important case had not been settled already in the literature ---not even implicitly.

One issue that arises frequently is that in general, if  $\A$ is an abelian category, its derived category $\D(\A)$ may not have ``small hom-sets'' due to its construction via calculus of fractions.\footnote{Though we prefer the framework of sets and classes \textit{\`a la} von Neumann-G\"odel-Bernays, for those readers fond of universes, it means that to construct $\D(\A)$ we may need to change our universe.} In \cite[Corollary 5.6]{AJS} it is shown that if $\A$ is a Grothendieck category, \ie abelian with a generator and exact filtered directed limits, then $\D(\A)$ exists, \ie it has ``small hom-sets''. This is shown by presenting $\D(\A)$ as a full subcategory of $\K(\A)$ (the category of complexes in $\A$ with maps up to homotopy), a consequence of the fact that $\D(\A)$ is a Bousfield localization of $\D(R)$ where $R$ is the ring of endomorphisms of a generator which is known to have ``small hom-sets'' by an explicit construction of unbounded resolutions. This, together with the fact that $\A_\qc(X)$ is a Grothendieck category, paves the way to check most of the conditions of stable homotopy category. There are, however, at least two conditions whose status does not follow from this, namely, the closed structure and the existence of strongly dualizable objects.

In this paper we prove that the category $\D(\A_\qc(X))$ is a ``stable homotopy category'' in the sense of \cite{hps}, at least when $X$ is quasi-compact and separated. In fact we do not need the full separation hypothesis but a slightly weaker condition called ``semi-separation'', already used by Thomason and his collaborators and students in K-theory of schemes, see \cite{tt}. It was also considered independently with the characterization used here in the context of moduli problems. 

We prove an analogous result for formal schemes, namely, if $\FX$ is a noetherian semi-separated formal scheme the derived category of sheaves with  quasi-coherent torsion homologies, $\D_\qct(\FX)$ (\cfr \cite{dfs}), is a stable homotopy category.

We also show that for a quasi-compact and separated scheme $X$ the category $\D(\A_\qc(X))$ is unital and algebraic in the sense of \cite{hps}. On the other hand if $\FX$ is a noetherian semi-separated formal scheme the category $\D_\qct(\FX)$ is algebraic but most often non unital. This gives a clue on the difference between the structure of the derived categories associated to usual schemes compared to those associated to formal schemes.

There is an aspect that we have not treated at all in this paper, and that seems to be very important for some people working in the field, namely the existence of a good model category whose associated homotopy category is $\D(\A_\qc(X))$. In any case, for us it is enough to use implicitly the DG structure of the category $\CCC(\A_\qc(X))$ that allow us to perform all of the relevant constructions. For the work on model category structures on $\CCC(\A_\qc(X))$, we will content ourselves with mention the papers \cite{hov} and \cite{gill}.

Let us describe in finer detail the contents of this paper. In the first section we recall the axioms in \cite{hps} and the fact that for any scheme $X$ the category $\D(\A_\qc(X))$ is triangulated, possesses coproducts (because we are dealing with unbounded complexes) and that Brown representability holds, being $\A_\qc(X)$ a Grothendieck category.

The second section, which may have an independent interest, gives the essential \emph{sorites} of the semi-separated maps of schemes. They are maps in which the diagonal is affine, though the original definition by Thomason was defined by the existence special bases of open subsets. This point is settled in the remark after Proposition \ref{covss}. A scheme $X$ is semi-separated, \ie the canonical map $X \to \spec(\ZZ)$ is semi-separated, if and only if the intersection of two affine open subsets of $X$ remains affine. This is the consequence of separation that is used in (\v{C}ech) computations in the cohomology of schemes. We have included this section here due to a lack of a complete treatment in the literature.

The third section deals with the closed structure. While it is well-known that the derived tensor product respects quasi-coherence, it is clear that the usual functor $\R{}\shom_X(-,-)$ does not. But it can be fixed composing with the quasi-coherator functor (recalled in \ref{qctor}). This definition of the internal hom functor may seem clumsy, but it is convenient for checking the adjoint property. In the fifth section we show that the internal hom previously defined can be identified with the derived functor of the internal hom in the abelian category $\A_\qc(X)$, which is $Q_X\shom_X(-,-)$ as long as we consider the derived funtor with quasi-coherent K-injective resolutions. This can be considered a technical point and the uninterested reader may skip it.

The fourth section shows the existence of strongly dualizable generators. We proceed recalling the fact proved by Neeman that $\D(\A_\qc(X))$ is compactly generated. In this category a compact object is a perfect complex. Then, we show that a perfect complex is strongly dualizable concluding that $\D(\A_\qc(X))$ is generated by strongly dualizable objects. Together with the previous remarks we obtain our main result stated as Corollary \ref{main}.

The last section deals with the case of formal schemes. Here quasi-coherent torsion sheaves ---introduced and studied in \cite{dfs}--- come into being a natural alternative extending the concept of quasi-coherent sheaves on a ordinary scheme. We prove that for a noetherian semi-separated formal scheme $\FX$ its derived category of sheaves of modules with quasi-coherent torsion homologies, $\D_\qct(\FX)$, is a stable homotopy category following a similar path to the case of usual schemes with an extra twist here and there due to the peculiarities of quasi-coherent torsion sheaves. Note that the lack of \emph{torsion} flat resolutions forces us to work with $\D_\qct(\FX)$ instead of its equivalent subcategory $\D(\A_\qct(\FX))$. Again the closed structure and the existence of strongly dualizable generators are the parts that need to be dealt carefully with.

%

\section{The axioms}

In this section we set the stage by recalling the definition of stable homotopy category and some basic definitions that will be used throughout the paper. We also show that part of the conditions that define a stable homotopy category are well-known to hold for the derived category of quasi-coherent sheaves on any scheme. 

\begin{cosa}\label{axioms} Let us enumerate briefly the five axioms of an abstract stable homotopy category. Let $\T$ be a category, we say that $\T$ is a stable homotopy category if the following hold:
\begin{enumerate}
\item \label{tri} $\T$ is a triangulated category \cite{vtc}.
\item \label{scl} $\T$ is a symmetric closed category in the sense of \cite{lajolla}.
\item \label{sdg} $\T$ possesses a system of \emph{strongly dualizable} generators\footnote{\emph{Generators} in the sense that $\T$ is the smallest localizing subcategory of itself that contains them, see \ref{strong} below.} \cite[Definition 1.1.2]{hps}.
\item \label{cop} $\T$ possesses arbitrary coproducts.
\item \label{brw} A cohomological functor taking values in $\T$ (see below) is representable, \ie it is of the form $\Hom_{\T}(-,X)$ with $X \in \T$.
\end{enumerate}

The definition of \emph{triangulated category} is recalled in \cite[Definition A.1.1.1]{hps}

A \emph{symmetric monoidal category} is a category together with an internal bifunctor (``tensor'' or ``smash'' bifunctor) associative and commutative together with a unit object. This data given up to natural equivalence satisfying certain coherence commutative diagrams. It is said to be \emph{closed} if it possesses another bifunctor (``internal hom'' or ``function space'') suitably adjoint to the previous one. For more details see \cite[Definition A.2.1]{hps} or the original source \cite{lajolla}.

The different notions of generator that will be used are discussed throughout the paper, specially in \ref{genadd} and \ref{strong}.

Denote by $\ab$ the category of abelian groups. A functor from a triangulated category to $\ab$ that takes triangles to exact sequences is often called a homological functor. In \cite{hps} it is called an exact functor. A cohomological functor is a contravariant such functor that takes coproducts to products. To summarize, given a triangulated category $\T$, we say that a functor $\T^{\op} \to \ab$ is cohomological if it takes triangles to exact sequences and if it takes coproducts in $\T$ to products in $\ab$, otherwise said, if the corresponding functor $\T \to \ab^{\op}$ preserves coproducts.
\end{cosa}

Along this paper, for a scheme $(X, \CO_X)$, we will consider the category of sheaves of $\CO_X$-Modules and denote it by $\A(X)$. We will mainly consider its full subcategory of quasi-coherent $\CO_X$-Modules and denote it by $\A_\qc(X)$. For their basic properties see \cite[\S 2.2]{ega1}\footnote{We will add a reference to \cite{ega160} for the benefit of readers using the free \texttt{www.numdam.org} version. Unfortunately, in this case there is no equivalence though the results follow easily from \cite[\S 1]{ega160} together with the well-known fact that a sheaf $\CF$ is quasi-coherent if and only if its restriction to an affine open set $U = \spec(A)$ is such that $\CF|_U \cong \widetilde{M}$ for a certain $A$-module $M$.}.

\begin{cosa} \label{genadd}
On an additive category $\SC$, a set of objects $\{E_{\alpha} \, / \, \alpha \in L\}$ is called a set of generators if, for an object $M \in \SC$, $\Hom_{\SC}(E_{\alpha}, M) = 0$, for all $\alpha \in L$ implies $M = 0$. If $\SC$ possesses coproducts, the existence of a single generator is equivalent to the existence of a set of generators taking as single generator the coproduct of all the objects in the family. Usually, it is more convenient to work with the set of generators than with its coproduct. That is because often, each object in the set may have a finiteness property that is not shared by the whole coproduct.

Recall that an abelian category $\A$ is a Grothendieck category if it possesses a generator and satisfies Grothendieck's axiom AB5 from \cite[1.5]{toh} which is equivalent to the existence of coproducts and the  exactness of filtered direct limits \cfr \cite[Proposition 1.8]{toh}.
\end{cosa}

\begin{lem}\label{grcat}
Let $X$ be a scheme, the category $\A_\qc(X)$ is a Grothendieck category.
\end{lem}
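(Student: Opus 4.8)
The plan is to realise $\A_\qc(X)$ as a well-behaved abelian subcategory of the Grothendieck category $\A(X)$ and then to produce a set of generators by a transfinite ``small object'' argument adapted to quasi-coherence. First I would check that $\A_\qc(X)$ is abelian with the inclusion into $\A(X)$ exact; since $\A(X)$ is abelian it suffices to see that $\A_\qc(X)$ is closed under kernels and cokernels formed in $\A(X)$. This is local, since restriction to an open subset is exact, and on an affine open $U=\spec(A)$ the functor $M\mapsto\widetilde M$ is an exact, fully faithful equivalence onto $\A_\qc(U)$; thus the kernel (resp.\ cokernel) in $\A(X)$ of a morphism of quasi-coherent sheaves restricts over each affine $U$ to $\widetilde K$, with $K$ the kernel (resp.\ cokernel) of the corresponding morphism of modules, hence is quasi-coherent.

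Next I would establish AB5, using that $\A(X)$ is itself a Grothendieck category. The key point is that $\A_\qc(X)$ is closed under arbitrary coproducts taken in $\A(X)$: restriction to an open set preserves coproducts, and over an affine $U=\spec(A)$ the canonical morphism $\bigoplus_\alpha\widetilde{M_\alpha}\to\widetilde{\bigoplus_\alpha M_\alpha}$ is an isomorphism on stalks --- localisation commutes with direct sums --- hence an isomorphism. Combined with closure under cokernels this shows $\A_\qc(X)$ has all colimits and that they are computed as in $\A(X)$; as the inclusion is exact and faithful and filtered colimits are exact in $\A(X)$, they are exact in $\A_\qc(X)$.

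The heart of the matter, and the main obstacle, is the existence of a set of generators. Fix an affine open covering $X=\bigcup_{i\in I}U_i$, $U_i=\spec(A_i)$, and an infinite cardinal $\kk$ with $\kk\ge|I|$ and $\kk\ge|A_i|$ for all $i$; call a quasi-coherent subsheaf $\CG$ of a quasi-coherent sheaf $\CF$ \emph{$\kk$-bounded} when $|\CG(U_i)|\le\kk$ for all $i$. I would prove that every $\CF\in\A_\qc(X)$ is the filtered union of its $\kk$-bounded quasi-coherent subsheaves: given a section $\sigma$ of $\CF$ over some $U_i$, one builds an increasing chain $\CG_0\subseteq\CG_1\subseteq\cdots$ of $\kk$-bounded quasi-coherent subsheaves of $\CF$, where $\CG_{n+1}$ enlarges $\CG_n$ by adjoining, over each $U_j$, the $A_j$-submodule spanned by at most $\kk$ new sections, chosen so that the restrictions to the overlaps of the sections already present in $\CG_n$ are accounted for on both sides; then $\CG=\bigcup_n\CG_n$ is a $\kk$-bounded quasi-coherent subsheaf of $\CF$ containing $\sigma$. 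Since $\kk$-bounded quasi-coherent subsheaves are closed under finite sums the family is filtered, and its union is all of $\CF$ because its restriction to each $U_i$ contains every section of $\CF(U_i)$. Finally there is a set $\CS$ of $\kk$-bounded quasi-coherent sheaves meeting every isomorphism class (each such sheaf is glued from $\le\kk$-generated modules over the $A_i$ along descent data, of which there is only a set), and then $\CS$ is a set of generators: a nonzero $\CF$ has a nonzero $\kk$-bounded quasi-coherent subsheaf, isomorphic to some $E\in\CS$, so $\Hom_{\A_\qc(X)}(E,\CF)\ne0$; by \ref{genadd} their coproduct is a single generator.

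The delicate step is the transfinite construction above. When $X$ is not separated the overlaps $U_i\cap U_j$ need be neither affine nor quasi-compact, so ``accounting for overlap restrictions'' has to be performed on a common refinement of $U_i$ and $U_j$ by principal open sets, and one must verify that the process stabilises to a genuine quasi-coherent sheaf without breaking the cardinality bound --- which is why $\kk$ is taken so generously. For the quasi-compact and semi-separated schemes relevant to the rest of this paper the covering is finite and the overlaps are again affine, so this bookkeeping is light. In any event the statement is classical; see \cite{tt} and \cite[\S 2.2]{ega1}.
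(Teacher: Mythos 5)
Your proof is correct in substance, but the decisive step --- the existence of a set of generators --- is handled by a genuinely different argument from the paper's. The paper disposes of the abelian and AB5 conditions exactly as you do (closure under kernels, cokernels and colimits in $\A(X)$, checked affine-locally, citing \cite[Corollaire (2.2.2)]{ega1} and \cite[Proposition 3.1.1]{toh}), but for the generator it simply invokes \cite[Corollary 3.5]{ee}: Enochs and Estrada identify $\A_\qc(X)$ with the category of quasi-coherent module representations of a quiver with a ring attached to each vertex, and prove that any such category has a generator. You instead reprove the generator from scratch by the transfinite ``$\kk$-bounded subsheaf'' method --- showing every $\CF\in\A_\qc(X)$ is the filtered union of quasi-coherent subsheaves of bounded size and taking a set of isomorphism-class representatives --- which is essentially Gabber's original argument, credited in the paper only as a historical remark (\cite[p.~28]{bc}). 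What each route buys: the citation to \cite{ee} is short and dovetails with the quiver description the paper exploits immediately afterwards, while your argument is self-contained and makes the cardinality mechanism explicit; the latter is valuable but also where the only real care is needed. In particular, your intermediate stages $\CG_n$ need not themselves be quasi-coherent subsheaves --- at stage $n$ the chosen submodules $M_j\subseteq\CF(U_j)$ generally fail the compatibility $\widetilde{M_i}|_{U_i\cap U_j}=\widetilde{M_j}|_{U_i\cap U_j}$, which is only achieved in the colimit --- so you should phrase the chain as a chain of families of submodules (or of not-necessarily-quasi-coherent subsheaves) whose union is quasi-coherent, rather than asserting quasi-coherence at every finite stage. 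With that adjustment, and the bookkeeping over non-affine overlaps that you already flag, the argument goes through for an arbitrary scheme, matching the generality of the lemma.
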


\begin{proof}
First of all, $\A_\qc(X)$ is an abelian category, by \cite[Corollaire (2.2.2)]{ega1}\footnote{See the previous footnote.}. The category $\A(X)$ possesses exact direct limits thus also does $\A_\qc(X)$ because direct limits of quasi-coherent sheaves are quasi-coherent sheaves by \emph{loc.~cit.} Furthermore, direct limits are exact in the full category of sheaves of modules \cite[Proposition 3.1.1]{toh}, in other words $\A_\qc(X)$ is an AB5 category. Finally, it possesses a generator by \cite[Corollary 3.5]{ee}.
\end{proof}

\begin{cosa}
An interesting feature in Enochs and Estrada proof of the existence of a generator for $\A_\qc(X)$ is its expression as the category of quasi-coherent modules over a quiver ring representation. Given a scheme $X$, take a covering by affine open subsets $\{U_\alpha\}_{\alpha \in L_0}$ and cover each $U_\alpha \cap U_\beta$ by affine subsets $\{V_{\alpha \beta \gamma}\}_{\gamma \in L_{\alpha, \beta}}$. Now take a vertex for every $U_\alpha$ and every $V_{\alpha \beta \gamma}$ with $\alpha, \beta \in L_0$ and $\gamma \in L_{\alpha, \beta}$. Draw an edge $\bullet \ot \bullet$ for every inclusion $V_{\alpha \beta \gamma} \inc U_{\alpha}$. This gives a quiver that expresses the underlying structure of the covering. This quiver carries a natural ring representation, take for every vertex corresponding to $U_\alpha$ the ring $\CO_X(U_\alpha)$ and for every vertex corresponding to $V_{\alpha \beta \gamma}$ the ring $\CO_X(V_{\alpha \beta \gamma})$, to every edge take the restriction homomorphism $\CO_X(U_{\alpha}) \to \CO_X(V_{\alpha \beta \gamma})$. Let $R = R(X, \mathbf{U})$ be the ring representation associated to the scheme $X$ and the covering $\mathbf{U} = \{U_{\alpha}, V_{\alpha \beta \gamma}\}_{\alpha, \beta \in L_0, \gamma \in L_{\alpha, \beta}}$. A module over a ring representation of a graph $R$ is the data of an $R(v)$-module $M(v)$ for each vertex $v$ and a morphism $M(r) \colon M(v) \to M(w)$ for each edge $r \colon v \to w$ that is $R(v)$-linear. Such a representation $M$ is called quasi-coherent if for each edge $r$ as above the morphism $1\otimes M(r) \colon R(w) \otimes_{R(v)} M(v) \to M(w)$ is an isomorphism of $R(w)$-modules. Then there is an equivalence of categories between quasi-coherent sheaves on $X$ and quasi-coherent module representations on $R(X, \mathbf{U})$. Actually, what is proved in \cite{ee} is that the category of quasi-coherent module representations of an arbitrary \emph{representation by rings} $R$ of a quiver possesses a generator.

Note that if $X$ is quasi-compact and quasi-separated we may choose a finite quiver to represent $\A_\qc(X)$ as its quasi-coherent module representations. As an example, take $X = \PR^1_K$. Using the obvious two affine open subsets, we obtain the quiver $\bullet \to \bullet \ot \bullet$ 
and its representation 
\[
K[t] \to K[t, t^{-1}] \ot K[t^{-1}].
\]
\end{cosa}

\begin{rem}
The existence of a generator in $\A_\qc(X)$ has been well-known under some hypothesis over the scheme $X$ for a long time. The first to prove it in general was Gabber in an unpublished letter to Conrad \cite[p. 28]{bc}. For a quasi-compact and quasi-separated scheme it is shown to exist in \cite[B.3.]{tt}, that uses \cite[Corollaires (6.9.9), (6.9.12)]{ega1}\footnote{See \cite[(9.4.9)]{ega160} together with \cite[(1.7.7)]{ega41}.}.
\end{rem}

\begin{cosa}
If $\A$ is an abelian category we will denote by $\CCC(\A)$ its category of complexes and by $\D(\A)$ its (unbounded) derived category. Conditions on its homologies will be denoted by a subscript in $\D$. 
We will denote by $\K(\A)$ the homotopy category of complexes of $\A$. As we recalled in the introduction, the construction of $\D(\A)$ via calculus of fractions does not guarantee that it has small hom-sets. However, if $\A$ is a Grothendieck category then $\D(\A)$ has small hom-sets by \cite[Corollary 5.3]{AJS}. This is achieved by identifying $\D(\A)$ with the subcategory of $\K(\A)$ formed by K-injective complexes. A complex $I^\bullet$ is K-injective if for every acyclic complex $A^\bullet \in \K(\A)$ the complex $\Hom^\bullet(A^\bullet, I^\bullet)$ is acyclic. The dual notion is called K-projective. These notions are due to Spaltenstein \cite[1.5]{S}. An important property of K-injective objects is that
\[
\Hom_{\K(\A)}(M^\bullet, I^\bullet) \iso \Hom_{\D(\A)}(M^\bullet, I^\bullet).
\]
for every $M^\bullet \in \K(\A)$.
The notion of K-injective complex is extremely useful as it allows to compute right derived functors. A list of equivalent characterization of these complexes can be found in \cite[Proposition (2.3.8)]{LDC}; note that in \lc K-injective is called $q$-injective.
\end{cosa}

\begin{thm} \label{asht145}
Let $X$ be a scheme. For the category $\D(\A_\qc(X))$ the axioms (\ref{tri}),  (\ref{cop}) and (\ref{brw}) from \ref{axioms} always hold.
\end{thm}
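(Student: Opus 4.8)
The plan is to check the three axioms separately; none requires the semi-separation hypothesis, and all three follow from Lemma \ref{grcat} together with standard properties of derived categories of Grothendieck categories.

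Axiom (\ref{tri}) needs essentially only a citation: for any abelian category $\A$ the derived category $\D(\A)$ carries Verdier's triangulated structure \cite{vtc}, with translation the shift of complexes and distinguished triangles those isomorphic to mapping-cone triangles. The single subtle point, since we work with unbounded complexes, is whether $\D(\A_\qc(X))$ has small hom-sets; here I would invoke Lemma \ref{grcat} and \cite[Corollary 5.3]{AJS}, which identify $\D(\A_\qc(X))$ with the full subcategory of K-injective complexes of $\K(\A_\qc(X))$ and so guarantee small hom-sets.

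For axiom (\ref{cop}): by Lemma \ref{grcat} the category $\A_\qc(X)$ satisfies AB5, so small coproducts exist in it and are exact. Hence for a set-indexed family $\{M_i^{\bullet}\}$ of complexes the termwise coproduct $\bigoplus_i M_i^{\bullet}$ commutes with cohomology and sends quasi-isomorphisms to quasi-isomorphisms; a routine check of the universal property, using a K-injective resolution of the target to compute maps out of $\bigoplus_i M_i^{\bullet}$, shows that its image in $\D(\A_\qc(X))$ is the coproduct there. (This is also recorded in \cite{AJS}.)

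For axiom (\ref{brw}), Brown representability, I would combine the first two points with the generation hypothesis: since $\A_\qc(X)$ is Grothendieck it has a set of generators, and their images together with all translates form a set of generators of $\D(\A_\qc(X))$ which in fact makes it a well-generated triangulated category in the sense of Neeman. Neeman's Brown representability theorem for well-generated categories \cite{Ntc} --- or, directly in this generality, the results of \cite{AJS} --- then gives that every cohomological functor $\D(\A_\qc(X))^{\op}\to\ab$ is representable, \ie of the form $\Hom_{\D(\A_\qc(X))}(-,Y)$. I expect this last point to be the only one with real content; the ``obstacle'' is purely a matter of quoting the right general theorem, and it is entirely subsumed once we know, by Lemma \ref{grcat}, that $\A_\qc(X)$ is a Grothendieck category.
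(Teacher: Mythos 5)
Your proposal is correct and follows essentially the same route as the paper: each axiom is verified separately from the fact (Lemma \ref{grcat}) that $\A_\qc(X)$ is a Grothendieck category, with coproducts computed termwise using AB5 and Brown representability quoted from \cite[Theorem 5.8]{AJS}. Your detour through well-generated categories for axiom (\ref{brw}) is unnecessary but harmless, since the cited result of \cite{AJS} applies directly to the derived category of any Grothendieck category, which is exactly what the paper does.
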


\begin{proof} Most of this assertions are classic. 

The statement (\emph{\ref{tri}}) is trivial because a derived category is triangulated by construction (\cfr \cite[Example (1.4.4)]{LDC}).

The existence of coproducts (\emph{\ref{cop}}) is due to the fact that a coproduct in $\A(X)$ of a family of objects in $\A_\qc(X)$, remains quasi-coherent by \cite[Corollaire (2.2.2)]{ega1}\footnote{\textit{Cfr}. footnote on page \pageref{grcat}.}. But coproducts of sheaves are exact because the category of sheaves of modules $\A(X)$ satisfies AB5 by \cite[Proposition 3.1.1]{toh}, as recalled before. Thus coproducts of complexes represent coproducts in the derived category.

Finally, (\emph{\ref{brw}}) is satisfied because a cohomological functor from the derived category of a Grothendieck category to $\ab$ is representable by \cite[Theorem 5.8]{AJS}.
\end{proof}


\section{Semi-separated maps}

In this section we will discuss a mild generalization of separated maps that it is stronger than quasi-separated and that encompasses the most useful property of separated maps for cohomology. Semi-separated schemes have already been considered in the context of cohomology and $K$-theory by Thomason and Trobaugh \cite{tt} and Thomason's students through a condition on the existence of certain affine bases of the topology of $X$ (see Proposition \ref{sskey} below and the remark after \ref{sssch}). In the context of moduli problems they arise as schemes with affine diagonal, see, for instance, the introduction of \cite{to}. Neither of these references give a systematic development of its main properties nor they give the equivalence between both characterizations, so we do it here.

\begin{defn}
A map of schemes $f \colon X \to Y$ is called \emph{semi-separated} if the associated diagonal map $\Delta_f \colon X \to X \times_Y X$ is affine. For the notion of affine map and its basic properties, see \cite[\S 9.1]{ega1}\footnote{\cite[\S 1]{ega2}}. We will use the notation $\Delta_{X|Y}$ for $\Delta_{f}$ indifferently.
\end{defn}

\begin{rem} Note that a separated morphism is semi-separated because a closed embedding is an affine map \cite[Proposition (9.1.16) (i)]{ega1}. Also, an affine morphism is quasi-compact \cite[Proposition (9.1.3)]{ega1}, so it follows that a semi-separated morphism is quasi-separated.\footnote{You may susbstitute the citations in this paragraph by \cite[Proposition (1.6.2)]{ega2} and \cite[Proposition (1.2.4)]{ega2}}
\end{rem}

\begin{lem}\label{deltafin} 
Let $f \colon X \to Y$ and $g \colon Y \to Z$ be morphisms of schemes. If g is semi-separated, then the canonical morphism $h \colon X \times_Y X \to X \times_Z X$ is affine.
\end{lem}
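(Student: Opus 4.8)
The plan is to realise $h$ as a base change of the diagonal $\Delta_g \colon Y \to Y \times_Z Y$. Since $g$ is semi-separated this diagonal is affine, and affine morphisms are stable under base change, so $h$ will be affine.

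First I would recall how $h$ arises: the two projections $p_1, p_2 \colon X \times_Y X \to X$ become equal after composition with $g \circ f$, so by the universal property of $X \times_Z X$ the pair $(f p_1, f p_2)$ induces the canonical morphism $h \colon X \times_Y X \to X \times_Z X$; note also that $f p_1 = f p_2 =: \pi \colon X \times_Y X \to Y$ since both copies of $X$ in $X\times_Y X$ map to $Y$ via $f$. I claim that the square
\[
\begin{array}{ccc}
X \times_Y X & \xto{\pi} & Y \\
\downarrow{\scriptstyle h} & & \downarrow{\scriptstyle \Delta_g} \\
X \times_Z X & \xto{f \times_Z f} & Y \times_Z Y
\end{array}
\]
is cartesian. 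It commutes because both composites send a point of $X\times_Y X$ to $(f p_1, f p_1)$ in $Y \times_Z Y$. For the universal property I would argue directly: a pair of morphisms $u \colon T \to X \times_Z X$ and $v \colon T \to Y$ with $(f \times_Z f)\circ u = \Delta_g \circ v$ is the same as a pair $a_1, a_2 \colon T \to X$ with common image in $Z$ (the datum of $u$) together with $v$, subject to $f a_1 = v = f a_2$; eliminating $v$, this is precisely the datum of a pair $a_1, a_2 \colon T \to X$ with $f a_1 = f a_2$, i.e.\ of a single morphism $T \to X \times_Y X$, and one checks at once that the resulting bijection is compatible with $\pi$ and $h$. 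Hence the square is cartesian and $h$ is the base change of $\Delta_g$ along $f \times_Z f$ (equivalently, $X\times_Y X \cong (X\times_Z X)\times_{Y\times_Z Y} Y$).

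Finally, as $g$ is semi-separated, $\Delta_g$ is affine by definition, and affine morphisms are stable under base change (\cite[\S 9.1]{ega1}); therefore $h$ is affine, as claimed. I do not expect a genuine obstacle here: the only step that is not purely formal is the verification that the displayed square is cartesian, and even that reduces to a routine comparison of universal properties.
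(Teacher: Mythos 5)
Your proposal is correct and follows essentially the same route as the paper: exhibiting $h$ as the base change of $\Delta_g$ along $f \times_Z f$ via the cartesian square (which the paper justifies by citing EGA or Yoneda's lemma, exactly the verification you carry out by hand), and then invoking stability of affine morphisms under base change.
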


\begin{proof}
By \cite[Proposition (\textbf{0}.1.4.8)]{ega1} (or use Yoneda's lemma) we have a cartesian square
\begin{diagram}[height=2.2em,w=3em,p=0.3em,labelstyle=\scriptstyle]
X \times_Y X & \rTo^{h}         & X \times_Z X \\
\dTo_p       &                  & \dTo_{f \times_Z f} \\ 
Y            & \rTo^{\Delta_g}  & Y \times_Z Y  \\  
\end{diagram}
where $p$ and $h$ denote the obvious natural morphisms. Now  $\Delta_{g}$ is affine by assumption and therefore $h$ is affine by base-change \cite[Proposition (9.1.16)]{ega1}\footnote{\cite[Proposition (1.6.2)]{ega2}}.
\end{proof}

\begin{prop}\label{sorissep}
In the category of schemes, we have the following:
\begin{enumerate}
\item An embedding (or more generally a radical morphism) is semi-separated.
\item The composition of semi-separated maps is semi-separated.
\item If $f \colon X \to Y$ is a semi-separated $S$-morphism and $g \colon S' \to S$ is a scheme map, then $f_{S'} \colon X \times_S S' \to Y \times_S S'$ is semi-separated.
\item If $f \colon X \to X'$ and $g \colon Y \to Y'$ are semi-separated $S$-morphisms, then $f\times_S g \colon X \times_S Y \to X' \times_S Y'$ is semi-separated.
\item If the composition of two morphisms $g \circ f$ is semi-separated 
then $f$ is semi-separated.
\item If a morphism $f$ is semi-separated then the same is true for its reduced associated morphism $f_\mathrm{red}$.
\end{enumerate}
\end{prop}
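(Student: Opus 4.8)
The plan is to deduce all six statements formally from the standard stability properties of the class of affine morphisms --- it is stable under composition and under arbitrary base change, and every closed immersion is affine, see \cite[\S 9.1]{ega1} --- together with two classical facts: the diagonal of any morphism of schemes is an immersion (hence a monomorphism), and the formation of the fibred product $X\times_Y X$ commutes with base change. For \textit{(i)}, recall that $f$ is radical (\ie universally injective) precisely when $\Delta_f$ is surjective; being always an immersion, $\Delta_f$ is then a surjective immersion, hence a closed immersion, hence affine, and an embedding is a fortiori radical. For \textit{(ii)}, given $X\xrightarrow{f}Y\xrightarrow{g}Z$ with $f$, $g$ semi-separated, I would use the factorization $\Delta_{X|Z}=h\circ\Delta_{X|Y}$, where $h\colon X\times_YX\to X\times_ZX$ is the morphism of Lemma \ref{deltafin}: that lemma gives $h$ affine (since $g$ is semi-separated), $\Delta_{X|Y}$ is affine (since $f$ is semi-separated), and a composition of affine morphisms is affine. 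For \textit{(iii)}, the diagonal of the base change $f_{S'}$ is canonically the base change of $\Delta_f$ along $S'\to S$, so it is affine because affine morphisms are stable under base change; in particular semi-separatedness is stable under arbitrary base change.

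Statements \textit{(iv)} and \textit{(vi)} are then purely formal. For \textit{(iv)}, factor $f\times_S g$ as $X\times_SY\xrightarrow{f\times_S\id_Y}X'\times_SY\xrightarrow{\id_{X'}\times_S g}X'\times_SY'$; here $f\times_S\id_Y$ (resp.\ $\id_{X'}\times_S g$) is obtained from $f$ (resp.\ $g$) by base change along the projection $X'\times_SY\to X'$ (resp.\ $X'\times_SY\to Y$), hence is semi-separated by \textit{(iii)}, and one concludes by \textit{(ii)}. For \textit{(vi)}, writing $i_X\colon X_\mathrm{red}\to X$ and $i_Y\colon Y_\mathrm{red}\to Y$ for the canonical closed immersions, one has $i_Y\circ f_\mathrm{red}=f\circ i_X$; the right-hand side is semi-separated by \textit{(i)} and \textit{(ii)}, so $i_Y\circ f_\mathrm{red}$ is semi-separated, and then \textit{(v)} forces $f_\mathrm{red}$ to be semi-separated.

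The delicate point, and what I expect to be the main obstacle, is the cancellation property \textit{(v)}: since nothing is assumed on $g$, one cannot invoke Lemma \ref{deltafin} to conclude that $h\colon X\times_YX\to X\times_ZX$ is affine. Instead I would use once more $\Delta_{X|Z}=h\circ\Delta_{X|Y}$ together with the observation that $h$ is a \emph{monomorphism} --- indeed, by the cartesian square of Lemma \ref{deltafin}, $h$ is the base change of the diagonal $\Delta_g$, which is always an immersion --- and then apply the elementary fact that if $v\circ u$ is affine and $v$ is a monomorphism then $u$ is affine. This last fact follows by factoring $u$ through its graph $\Gamma_u$ relative to $v\circ u$: the relevant projection is a base change of the affine morphism $v\circ u$, while $\Gamma_u$ is an isomorphism, being a section of the monomorphism obtained from $v$ by base change. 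Taking $u=\Delta_{X|Y}$, $v=h$ and $v\circ u=\Delta_{X|Z}$ (which is affine because $g\circ f$ is semi-separated) gives that $\Delta_{X|Y}$ is affine, \ie $f$ is semi-separated. (One could alternatively quote the corresponding sorite for affine morphisms from \cite[\S 9.1]{ega1}, using that the immersion $h$ is separated.) Everything else is routine bookkeeping with fibred products.
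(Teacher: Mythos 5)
Your proof is correct, and for parts (i)--(iv) and (vi) it essentially coincides with the paper's: (i) via radical $\Rightarrow$ separated (you spell out the EGA fact the paper only cites, by noting that a surjective immersion is a closed immersion), (ii) via the factorization $\Delta_{X|Z}=h\circ\Delta_{X|Y}$ and Lemma \ref{deltafin}, (iii) via compatibility of the diagonal with base change, and (vi) from (i), (ii) and (v); for (iv) you compose two base changes where the paper invokes the formal equivalence of (iii) and (iv) from EGA~I (\textbf{0}.1.3.9), but that is the same sorite. The one genuine divergence is (v). The paper stays at the level of the morphisms themselves: it factors $f$ as $X\xto{\Gamma_f}X\times_ZY\xto{p_2}Y$, notes that $p_2=(g\circ f)\times_Z\id_Y$ is semi-separated by (iv) and that $\Gamma_f$ is an immersion, hence semi-separated by (i), and concludes by (ii). You instead work one level down, with the diagonals: from $\Delta_{X|Z}=h\circ\Delta_{X|Y}$, with $h$ a monomorphism because it is a base change of the immersion $\Delta_g$, you deduce that $\Delta_{X|Y}$ is affine via the cancellation property for affine morphisms along a monomorphism. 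Both arguments are sound; indeed your auxiliary fact (``$v\circ u$ affine and $v$ a monomorphism imply $u$ affine'') is proved by exactly the graph trick the paper applies to $f$, so the two proofs are the same idea deployed at different levels. The paper's version has the minor advantage of reusing only the already-established items (i)--(iv) with no extra lemma; yours isolates a reusable cancellation statement for affine morphisms and does not need (iv) as an input to (v).
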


\begin{proof}
To see (i), note that a radical morphism, in particular, an embedding is separated by \cite[Proposition (5.3.1), (i)]{ega1}, so it is semi-separated. For (ii), let $f \colon X \to Y$ and $g \colon Y \to Z$ be semi-separated maps. We have the following commutative diagram
\begin{equation}\label{dosdelta}
\begin{diagram}[height=2em,w=2em,p=0.3em,labelstyle=\scriptstyle]
X &                      & \rTo^{\Delta_{X|Z}} &           & X \times_Z X \\
  & \rdTo_{\Delta_{X|Y}} &                     & \ruTo_{h} &\\ 
  &                      & X \times_Y X        &           &\\  
\end{diagram}
\end{equation}
By hypothesis $\Delta_{X|Y}$ is affine and, being $g$ semi-separated, $h$ is also affine by Lemma \ref{deltafin}. As a consequence, $\Delta_{X|Z}$ is affine as wanted.
With (i) and (ii) proved, the argument \cite[Remarque (\textbf{0}.1.3.9)]{ega1} proves that (iii) and (iv), are equivalent, so let us prove the former. Note that
\((X \times_S S') \times_{(Y \times_S S')} (X \times_S S') \cong 
(X \times_Y X) \times_S S'\) and 
\( \Delta_{X \times_S S'|Y \times_S S'} \cong \Delta_{X|Y} \times_S \id_{S'}\) so
the result follows from \cite[Proposition (9.1.16) (iii)]{ega1}.
To prove (v) set $f \colon X \to Y$ and $g \colon Y \to Z$. Factor $f$ as
\[ X \xto{\Gamma_f} X \times_Z Y \xto{p_2} Y.
\]
Using (iv) we see that $p_2 = (g \circ f) \times_Z \id_Y$ is semi-separated and $\Gamma_f$ is semi-separated by (i), therefore, by (ii), $f$ is semi-separated.

Finally, the diagram
\begin{diagram}[height=2.2em,w=2.2em,p=0.3em,labelstyle=\scriptstyle]
X_\mathrm{red} & \rTo^{f_\mathrm{red}} & Y_\mathrm{red}  \\
\dTo_{j_X}     &                       & \dTo_{j_Y} \\ 
X              & \rTo^{f}              & Y \\  
\end{diagram}
where $j_X$ and $j_Y$ are the canonical embeddings, commutes. By (i), (ii) and (v) it follows that $f_\mathrm{red}$ is semi-separated\footnote{For the first cite to EGA in this proof we could use \cite[Proposition (5.5.1)]{ega2}, for the second \cite[(3.5.1)]{ega160} and for the last \cite[Proposition (1.6.2)]{ega2}}.
\end{proof}

\begin{prop} \label{covss} 
To be semi-separated is local on the base. In other words, let $f \colon X \to Y$ be a morphism of schemes and let $\{ V_{\alpha} \}_{\alpha \in L}$ be an open covering of $Y$, then $f$ is semi-separated if, and only if, its restrictions $f^{-1}(V_{\alpha}) \to  V_{\alpha}$ are semi-separated for all $\alpha \in L$.
\end{prop}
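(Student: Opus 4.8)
The plan is to reduce the statement to the diagonal morphism and then invoke the fact (already available from \cite{ega1}) that being an affine morphism is local on the base. First I would set up notation: write $X_\alpha = f^{-1}(V_\alpha)$ and $f_\alpha \colon X_\alpha \to V_\alpha$ for the restriction, and note the elementary identity
\[
X_\alpha \times_{V_\alpha} X_\alpha \;=\; f^{-1}(V_\alpha) \times_Y f^{-1}(V_\alpha),
\]
so that $\{X_\alpha \times_{V_\alpha} X_\alpha\}_{\alpha \in L}$ is an open covering of $X \times_Y X$ (each term being the preimage of $V_\alpha$ under the composite $X\times_Y X \to X \to Y$, or equivalently the preimage of the open set $X_\alpha\times X_\alpha$ intersected appropriately). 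Under this identification the diagonal $\Delta_{f_\alpha}\colon X_\alpha \to X_\alpha\times_{V_\alpha} X_\alpha$ is precisely the restriction of $\Delta_f\colon X \to X\times_Y X$ over this open piece, i.e. $\Delta_{f_\alpha} = \Delta_f|_{X_\alpha}$ and the square relating them is cartesian.

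The forward implication is immediate: if $f$ is semi-separated, $\Delta_f$ is affine, and the restriction of an affine morphism to an open subscheme of the target is affine (affineness is stable under base change, \cite[Proposition (9.1.16)]{ega1}), hence each $\Delta_{f_\alpha}$ is affine and each $f_\alpha$ is semi-separated. For the converse, suppose every $f_\alpha$ is semi-separated, so every $\Delta_{f_\alpha}$ is affine. Since $\{X_\alpha\times_{V_\alpha} X_\alpha\}_\alpha$ covers the target $X\times_Y X$ of $\Delta_f$, and on each member of the cover the morphism $\Delta_f$ restricts to the affine morphism $\Delta_{f_\alpha}$, the locality of affineness on the base (\cite[Proposition (9.1.10)]{ega1}, or the characterization via an affine covering of the target) shows $\Delta_f$ is affine, i.e. $f$ is semi-separated.

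I do not expect a genuine obstacle here; the whole proof is bookkeeping with fibered products. The one point that deserves a careful sentence is the claim that $\{X_\alpha\times_{V_\alpha} X_\alpha\}$ is an \emph{open} covering of $X\times_Y X$: one checks that $X_\alpha\times_{V_\alpha}X_\alpha$ is the open subscheme $\pi^{-1}(V_\alpha)$ where $\pi\colon X\times_Y X \to Y$ is the structure map, and these cover $X\times_Y X$ because the $V_\alpha$ cover $Y$. Everything else is a direct application of the standard sorites for affine morphisms recalled in \cite[\S 9.1]{ega1}. Hence the proposition follows.

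\begin{proof}
Write $X_\alpha := f^{-1}(V_\alpha)$ and let $f_\alpha \colon X_\alpha \to V_\alpha$ be the induced morphism. Denote by $\pi \colon X\times_Y X \to Y$ the canonical projection. For each $\alpha$ there is a canonical identification
\[
X_\alpha \times_{V_\alpha} X_\alpha \;\cong\; \pi^{-1}(V_\alpha),
\]
which is an open subscheme of $X \times_Y X$; as $\{V_\alpha\}_{\alpha\in L}$ covers $Y$, the family $\{X_\alpha\times_{V_\alpha} X_\alpha\}_{\alpha\in L}$ is an open covering of $X\times_Y X$. Under this identification the square
\begin{diagram}[height=2.2em,w=3em,p=0.3em,labelstyle=\scriptstyle]
X_\alpha & \rTo^{\Delta_{f_\alpha}} & X_\alpha\times_{V_\alpha} X_\alpha \\
\dInc    &                          & \dInc \\
X        & \rTo^{\Delta_f}          & X\times_Y X \\
\end{diagram}
is cartesian, so $\Delta_{f_\alpha}$ is the restriction of $\Delta_f$ over the open set $X_\alpha\times_{V_\alpha} X_\alpha$.

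If $f$ is semi-separated, $\Delta_f$ is affine, hence so is its restriction $\Delta_{f_\alpha}$ to an open subscheme of the target (affineness is stable under base change, \cite[Proposition (9.1.16)]{ega1}); thus each $f_\alpha$ is semi-separated. Conversely, if each $f_\alpha$ is semi-separated, then each $\Delta_{f_\alpha}$ is affine; since $\{X_\alpha\times_{V_\alpha} X_\alpha\}_{\alpha\in L}$ is an open covering of the target of $\Delta_f$ and $\Delta_f$ restricts to the affine morphism $\Delta_{f_\alpha}$ over each member, the locality of affineness on the base \cite[Proposition (9.1.10)]{ega1} shows that $\Delta_f$ is affine, i.e.\ $f$ is semi-separated.
\end{proof}
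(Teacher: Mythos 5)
Your proof is correct and follows essentially the same route as the paper's: both directions reduce to the diagonal, with the converse resting on the identification $X_\alpha\times_{V_\alpha}X_\alpha \cong X_\alpha\times_Y X_\alpha$ and the fact that these open sets cover $X\times_Y X$, so that affineness of $\Delta_f$ can be checked locally on its target. Your explicit verification that $X_\alpha\times_{V_\alpha}X_\alpha = \pi^{-1}(V_\alpha)$ is a welcome expansion of the covering claim that the paper only sketches by reference to EGA.
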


\begin{proof}
The ``only if'' part follows easily from Proposition \ref{sorissep}. For the ``if'' part, let us check that the map $\Delta_{X|Y}$ is affine. Let $U_\alpha := f^{-1}(V_{\alpha})$ for each $\alpha \in L$. Note that $U_{\alpha} \times_Y U_{\alpha} \cong U_{\alpha} \times_{V_{\alpha}} U_{\alpha}$, therefore if we check that $\{ U_{\alpha} \times_{V_{\alpha}} U_{\alpha}\}_{\alpha \in L}$ cover $X \times_Y X$, we are done. And this follows from an argument similar to the last part of the proof of \cite[Proposition (5.3.5)]{ega1}\footnote{\cite[Proposition (5.5.5)]{ega160}}.
\end{proof}

\begin{prop}\label{sskey}
Let $Y$ be an \emph{affine} scheme. Let $\{U_{\alpha}\}_{\alpha \in L}$ be an affine open covering of $X$. A morphism of schemes $f \colon X \to Y$ is semi-separated if, and only if, for any pair of indices $\alpha, \beta \in L$ the open subset $U_{\alpha} \cap U_{\beta}$ is affine.
\end{prop}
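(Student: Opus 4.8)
The plan is to reduce the statement to the case $Y = \spec(\ZZ)$ (i.e.\ to absolute semi-separation of $X$) and then to test affineness of $\Delta_{X|Y}$ by restricting to a natural affine open covering of $X\times_Y X$. First I would observe that, since $Y$ is affine, a morphism $f\colon X\to Y$ is semi-separated if and only if $X$ is a semi-separated scheme: indeed $X\to\spec(\ZZ)$ factors as $X\xto{f}Y\to\spec(\ZZ)$ with $Y\to\spec(\ZZ)$ separated (affine schemes are separated), so by parts (ii) and (v) of Proposition \ref{sorissep} the composite is semi-separated exactly when $f$ is. Thus it suffices to prove the equivalence for the structural morphism, and from now on ``semi-separated'' means $\Delta_X\colon X\to X\times_{\spec\ZZ}X =: X\times X$ affine.

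Next I would use the affine open covering $\{U_\alpha\}_{\alpha\in L}$ of $X$ to build the affine open covering $\{U_\alpha\times U_\beta\}_{(\alpha,\beta)\in L\times L}$ of $X\times X$ (products of affine opens are affine since $U_\alpha\times U_\beta = \spec(\CO_X(U_\alpha)\otimes_{\ZZ}\CO_X(U_\beta))$, and these cover $X\times X$ because the $U_\alpha$ cover $X$). The key identification is the cartesian square expressing $\Delta_X^{-1}(U_\alpha\times U_\beta) = U_\alpha\cap U_\beta$, so that the pullback of $\Delta_X$ along $U_\alpha\times U_\beta\inc X\times X$ is precisely the diagonal-type map $U_\alpha\cap U_\beta \to U_\alpha\times U_\beta$. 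Since affineness of a morphism is local on the target (by \cite[\S 9.1]{ega1}, cited already in the paper), $\Delta_X$ is affine if and only if each $U_\alpha\cap U_\beta \to U_\alpha\times U_\beta$ is affine.

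Finally I would close the loop: the target $U_\alpha\times U_\beta$ is affine, and a morphism with affine target is affine precisely when its source is an affine scheme; hence $U_\alpha\cap U_\beta \to U_\alpha\times U_\beta$ is affine if and only if $U_\alpha\cap U_\beta$ is affine. This gives the ``if'' direction directly. For the ``only if'' direction, if $f$ is semi-separated then each $\Delta_X^{-1}(U_\alpha\times U_\beta)=U_\alpha\cap U_\beta$ is affine over the affine scheme $U_\alpha\times U_\beta$, hence affine. The main obstacle — really the only subtle point — is being careful that one is testing affineness of $\Delta_X$ on a genuine open covering of the \emph{whole} of $X\times X$ rather than just of the image of $\Delta_X$; this is why one wants $\{U_\alpha\times U_\beta\}$ rather than only the ``diagonal'' pieces $\{U_\alpha\times U_\alpha\}$, and it is exactly the phenomenon already handled in Proposition \ref{covss}. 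Everything else is a routine assembly of base change for affine morphisms and the local-on-the-target criterion.
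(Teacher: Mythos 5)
Your proof is correct and is essentially the paper's own argument: both test affineness of the diagonal on the affine open covering $\{U_\alpha\times_Y U_\beta\}$ of $X\times_Y X$ and use the identification $\Delta_f^{-1}(U_\alpha\times_Y U_\beta)=U_\alpha\cap U_\beta$ together with the fact that affineness is local on the target. Your preliminary reduction to the absolute case $Y=\spec(\ZZ)$ via Proposition \ref{sorissep} is valid but unnecessary, since the same covering argument works directly over the affine base $Y$.
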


\begin{proof}
Note that the open subsets $U_{\alpha} \times_Y U_{\beta}$ constitute an affine covering of $X \times_Y X$ and $\Delta_f^{-1}(U_{\alpha} \times_Y U_{\beta}) = U_{\alpha} \cap U_{\beta}$ which is affine by hypothesis. So, this means that $\Delta_f$ is affine \ie $f$ is semi-separated.
\end{proof}

\begin{cosa}\label{sssch}
In the previous proposition the affine base scheme plays a very limited role, in fact, the characterization is independent of it, so the condition is equivalent to saying that the canonical map $X \to \spec(\ZZ)$ is semi-separated. In this case we say simply that the \emph{scheme} $X$ is \emph{semi-separated}.
\end{cosa}

\begin{rem}
Proposition \ref{sskey}. shows us that being a semi-se\-pa\-ra\-ted scheme in our sense is precisely the same as Thomason and Trobaugh's notion in \cite[Appendix B.7]{tt}. Note that the same applies to the definition of semi-separated morphism.
\end{rem}

\begin{cor}
A scheme $X$ is separated if, and only if, $X$ is semi-separated and given an affine open covering $\{U_{\alpha}\}_{\alpha \in L}$ of $X$ the canonical morphism
\[
\Gamma(U_{\alpha}, \CO_X) \otimes \Gamma(U_{\beta}, \CO_X) \to 
\Gamma(U_{\alpha} \cap U_{\beta}, \CO_X)
\]
is surjective.
\end{cor}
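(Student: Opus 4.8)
The plan is to test separatedness on the affine open cover $\{U_{\alpha}\times_{\spec(\ZZ)}U_{\beta}\}_{\alpha,\beta\in L}$ of $X\times_{\spec(\ZZ)}X$ and to translate everything into a statement about rings, exactly as in the classical treatment of the separatedness criterion. Recall that $X$ is separated if and only if the diagonal $\Delta\colon X\to X\times_{\spec(\ZZ)}X$ is a closed immersion, and that being a closed immersion is a property local on the target. Writing $A_{\alpha}=\Gamma(U_{\alpha},\CO_X)$, so that $U_{\alpha}=\spec(A_{\alpha})$ and $U_{\alpha}\times_{\spec(\ZZ)}U_{\beta}=\spec(A_{\alpha}\otimes A_{\beta})$, these products form an open cover of $X\times_{\spec(\ZZ)}X$ and $\Delta^{-1}(U_{\alpha}\times_{\spec(\ZZ)}U_{\beta})=U_{\alpha}\cap U_{\beta}$. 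Composing $\Delta$ with the two projections one sees that the morphism $U_{\alpha}\cap U_{\beta}\to U_{\alpha}\times_{\spec(\ZZ)}U_{\beta}$ induced by $\Delta$ corresponds, on sections over these affine opens, to the canonical map $A_{\alpha}\otimes A_{\beta}\to\Gamma(U_{\alpha}\cap U_{\beta},\CO_X)$ of the statement, namely $a\otimes b\mapsto (a|_{U_{\alpha}\cap U_{\beta}})\cdot(b|_{U_{\alpha}\cap U_{\beta}})$.

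For the ``only if'' direction, suppose $X$ is separated. Then $\Delta$ is a closed immersion, hence affine (a closed immersion is an affine map), so $X$ is semi-separated; by Proposition \ref{sskey} each $U_{\alpha}\cap U_{\beta}$ is then affine. Restricting the closed immersion $\Delta$ over the affine open $U_{\alpha}\times_{\spec(\ZZ)}U_{\beta}$ yields a closed immersion of affine schemes $U_{\alpha}\cap U_{\beta}\to U_{\alpha}\times_{\spec(\ZZ)}U_{\beta}$, and therefore the associated ring homomorphism $A_{\alpha}\otimes A_{\beta}\to\Gamma(U_{\alpha}\cap U_{\beta},\CO_X)$ is surjective.

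For the ``if'' direction, suppose $X$ is semi-separated and that all the maps $A_{\alpha}\otimes A_{\beta}\to\Gamma(U_{\alpha}\cap U_{\beta},\CO_X)$ are surjective. By Proposition \ref{sskey} each $U_{\alpha}\cap U_{\beta}$ is affine, so $\Delta^{-1}(U_{\alpha}\times_{\spec(\ZZ)}U_{\beta})=U_{\alpha}\cap U_{\beta}\to U_{\alpha}\times_{\spec(\ZZ)}U_{\beta}$ is a morphism of affine schemes whose associated ring map is, by the identification of the first paragraph, surjective; hence it is a closed immersion. Since the $U_{\alpha}\times_{\spec(\ZZ)}U_{\beta}$ cover $X\times_{\spec(\ZZ)}X$ and being a closed immersion is local on the target, $\Delta$ is a closed immersion, that is, $X$ is separated.

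The whole argument is elementary bookkeeping around the classical criterion, so I do not expect any real obstacle; the only points deserving a moment's care are the explicit identification in the first paragraph of the map induced by $\Delta$ with the ``product of restrictions'' homomorphism, and the observation that semi-separatedness is precisely what makes the source $U_{\alpha}\cap U_{\beta}$ of this map affine, so that being a closed immersion can be read off as surjectivity on rings of sections.
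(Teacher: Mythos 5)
Your argument is correct, but it is worth noting that the paper does not actually prove this corollary at all: its ``proof'' consists of the single remark that the statement is a restatement of \cite[Proposition (5.3.6)]{ega1}, i.e.\ the classical affine-cover criterion for separatedness. What you have written is, in effect, a self-contained proof of that EGA proposition: you test the diagonal $\Delta\colon X\to X\times_{\spec(\ZZ)}X$ against the affine cover $\{U_{\alpha}\times_{\spec(\ZZ)}U_{\beta}\}$, identify $\Delta^{-1}(U_{\alpha}\times_{\spec(\ZZ)}U_{\beta})=U_{\alpha}\cap U_{\beta}$ and the induced ring map with the ``product of restrictions'' homomorphism, and use that being a closed immersion is local on the target. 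All the steps are sound; the one point you rightly isolate --- that semi-separatedness is exactly what makes the source $U_{\alpha}\cap U_{\beta}$ affine, so that ``closed immersion'' can be read off as surjectivity on global sections --- is indeed where the corollary's two conditions cleanly split the EGA criterion into ``intersections affine'' plus ``ring maps surjective''. The only thing your write-up buys beyond the paper is self-containedness; the only thing the paper's citation buys is brevity.
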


\begin{proof}
This a restatement of \cite[Proposition (5.3.6)]{ega1}\footnote{\cite[Proposition (5.5.6)]{ega160}}. \noqed
\end{proof}

\begin{ex}
Let $K$ be a field. The line ``with its origin doubled'', $X$ \ie the scheme obtained gluing two copies $\AF^1_K$ of along $\AF^1_K \setminus \{\mathbf{0}\}$ using the identity as gluing map is semi-separated. Note that an open subset of $X$ is the complementary subset of a finite number of closed points. The most problematic case arises from removing one of the doubled points. Denote by $\mathbf{0}_1$ and $\mathbf{0}_2$ the two points corresponding to the origin then $(X \setminus \mathbf{0}_1) \cap (X \setminus \mathbf{0}_2)$ is affine and for the rest of couples of affine subsets the fact that their intersection is affine is clear from this. However $X$ is not separated as it is well-known. If we do the same thing with the plane $\AF^2_K$, we obtain a scheme  $X'$ which is not semi-separated. Denote again by $\mathbf{0}_1$ and $\mathbf{0}_2$ the two points corresponding to the origin. The intersection of affine open subsets $(X' \setminus \mathbf{0}_1) \cap (X' \setminus \mathbf{0}_2)$ (both isomorphic to $\AF^2_K$), is $\AF^2_K \setminus \{\mathbf{0}\}$, which is not an affine open subset. Therefore $X'$ is not semi-separated. Note that $X'$ is quasi-separated because it is noetherian, being of finite type over a field. We conclude that the implications
\[   \text{ separated } 
\imp \text{ semi-separated } 
\imp \text{ quasi-separated }
\]
are all strict.
\end{ex}

\begin{cor} 
A morphism $f \colon X \to Y$ is semi-separated if, and only if, for every open subset $V$ of $Y$ such that $V$ is semi-separated, then the open subset $f^{-1}(V)$ of $X$ is also semi-separated.
\end{cor}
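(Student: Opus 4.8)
The plan is to deduce this purely formally from the \emph{sorites} already established, using four ingredients: semi-separatedness is stable under base change (Proposition \ref{sorissep}(iii)), it is closed under composition (Proposition \ref{sorissep}(ii)), it is right-cancellable (Proposition \ref{sorissep}(v)), and it is local on the base (Proposition \ref{covss}). Recall also from \ref{sssch} that ``$V$ is a semi-separated scheme'' means precisely that the structural morphism $V \to \spec(\ZZ)$ is semi-separated, so such conditions can be fed into the composition and cancellation lemmas.

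For the ``only if'' direction I would argue as follows. Assume $f$ is semi-separated and let $V \subseteq Y$ be an open subset that is semi-separated as a scheme. The restriction $f^{-1}(V) \to V$ is the base change of $f$ along the open immersion $V \hookrightarrow Y$, hence semi-separated by Proposition \ref{sorissep}(iii). Composing it with $V \to \spec(\ZZ)$, which is semi-separated by hypothesis and \ref{sssch}, Proposition \ref{sorissep}(ii) yields that $f^{-1}(V) \to \spec(\ZZ)$ is semi-separated; that is, $f^{-1}(V)$ is a semi-separated scheme.

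For the ``if'' direction I would pick an affine open covering $\{V_\alpha\}_{\alpha \in L}$ of $Y$. Each $V_\alpha$ is affine, hence separated, hence semi-separated, so by hypothesis $f^{-1}(V_\alpha)$ is a semi-separated scheme, i.e. $f^{-1}(V_\alpha) \to \spec(\ZZ)$ is semi-separated. Factoring this morphism as $f^{-1}(V_\alpha) \to V_\alpha \to \spec(\ZZ)$ and invoking the right-cancellation property Proposition \ref{sorissep}(v), the restriction $f^{-1}(V_\alpha) \to V_\alpha$ is semi-separated for every $\alpha$. Since $\{V_\alpha\}_{\alpha \in L}$ covers $Y$, Proposition \ref{covss} then gives that $f$ itself is semi-separated.

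I do not expect any genuine obstacle: the statement is a formal consequence of the closure properties of the class of semi-separated maps together with the elementary fact that affine schemes are separated. The only point worth spelling out in the write-up is the translation between ``$V$ semi-separated as a scheme'' and ``$V \to \spec(\ZZ)$ semi-separated'' via \ref{sssch}, which is what allows the composition and cancellation lemmas to be applied in both directions.
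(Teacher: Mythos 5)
Your proposal is correct and follows essentially the same route as the paper, which disposes of the forward implication by the sorites of Proposition \ref{sorissep} and of the converse by the locality statement of Proposition \ref{covss}; you have merely made explicit which parts of the sorites are used (base change and composition for one direction, right-cancellation plus the semi-separatedness of affine schemes for the other). The extra detail is sound and fills in steps the paper leaves implicit.
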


\begin{proof}
The direct implication is a consequence of Proposition \ref{sorissep}. The converse follows immediately from Proposition \ref{covss}.
\end{proof}

\begin{rem}
As an example of the importance of the semi-separatedness condition let us mention the following result of Totaro \cite[Proposition 8.1]{to}. For a smooth scheme $X$ over a field the fact that every coherent sheaf on $X$ is a quotient of a vector bundle is equivalent to $X$ being semi-separated. This gives a condition that implies that the natural map $K^\text{naive}_0(X) \to K_0(X)$ in K-theory is an isomorphism. In order to study quasi-coherent sheaves on more general situations like algebraic spaces or algebraic stacks we expect that this condition is the right one to obtain results generalizing those that hold for schemes. 
\end{rem}

\section{Closed structure} \label{closed}

\begin{cosa}
\textbf{Derived tensor product}.
From now on, we will abbreviate and denote simply by $\D(X)$ the category $\D(\A(X))$ and by $\K(X)$ the category $\K(\A(X))$. We recall the definition of tensor products in this category. A complex $\CP^{\bullet}$ is called K-flat if for every acyclic complex $\CA^{\bullet} \in \K(X)$ the complex $\CP^\bullet \otimes^{\LL}_{\CO_X} \CA^\bullet$ is acyclic \cite[Definition 5.1]{S}. Given complexes $\CF^\bullet$ and $\CG^\bullet$ in $\D(X)$, we may compute 
$\CF^\bullet \otimes^{\LL}_{\CO_X} \CG^\bullet$ taking a K-flat resolution \cite[Propostion 6.5]{S} of either $\CF^\bullet$, $\CP^{\bullet}_\CF \to \CF^\bullet$, or of $\CG^\bullet$, $\CP^{\bullet}_\CG \to \CG^\bullet$. In other words
\[
\CP^{\bullet}_\CF \otimes_{\CO_X} \CG^\bullet \liso
\CF^\bullet \otimes^{\LL}_{\CO_X} \CG^\bullet \losi
\CF^\bullet \otimes_{\CO_X} \CP^{\bullet}_\CG.
\]
where the tensor product of complexes is defined as usual (\cfr \cite[(1.5.4)]{LDC}) and the isomorphism is understood in the derived category. This is not trivial because K-flat resolutions are not unique \emph{up to homotopy} ---\ie are not unique in $\K(X)$. See \S2.5, especially (2.5.7), in \lc for a discussion of $\otimes^{\LL}$ in $\D(X)$ (denoted there $\underline{\underline{\otimes}}$)\footnote{See \cite[\S 2.5]{LDC}. In \cite[Definition (2.5.1)]{LDC} the terminology $q$-flat is used for K-flat.}.
\end{cosa}

\begin{cosa} \label{inthomdef}
\textbf{Internal hom}.
There is another essential bifunctor defined on  the category $\D(X)$, namely, $\rshom_X^\bullet$. Given complexes $\CF^\bullet$ and $\CG^\bullet$ in $\D(X)$, we define it deriving the functor $\shom_X^\bullet$ (defined, for instance in \cite[(2.4.5)]{LDC}). To compute it, fix a K-injective resolution $\CG^\bullet \to \CI^{\bullet}_\CG$. So we have
\[
\rshom_X^\bullet(\CF^\bullet, \CG^\bullet) =
 \shom_X^\bullet(\CF, \CI^{\bullet}_\CG).
\]

\end{cosa}

\begin{rem}
The existence of K-injective resolutions for sheaves of modules on a ringed space is due to Spalstenstein \cite{S}. The reader will find a proof valid for any Grothedieck category (in particular for $\A_\qc(X)$ for any scheme $X$) in \cite[Theorem 5.4]{AJS}.
\end{rem}

\begin{lem}
On a quasi-compact and \emph{semi-separated} scheme $X$, every $\CF \in \D(\A_\qc(X))$ has a K-flat resolution (in $\K(X)$) $\CP^{\bullet}_\CF \to \CF^\bullet$ made up of quasi-coherent sheaves.
\end{lem}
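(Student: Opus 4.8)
The plan is to reduce everything to commutative algebra over the affine pieces of a finite cover. Since, by \cite[Proposition 5.4]{S}, a bounded\-above complex of flat $\CO_X$\-modules is K\-flat in $\K(X)$, it suffices to show that $\A_\qc(X)$ has ``enough flats'' (every quasi\-coherent sheaf is an epimorphic image of a quasi\-coherent $\CO_X$\-module that is flat \emph{as an $\CO_X$\-module}) and then to run Spaltenstein's resolution construction inside $\A_\qc(X)$. As $X$ is quasi\-compact, fix a finite affine open cover $\{U_1,\dots,U_n\}$. Because $X$ is semi\-separated, Proposition \ref{sskey} shows that each inclusion $j_i\colon U_i\inc X$ is an \emph{affine} morphism (the preimage of an affine open of $X$ is an intersection of two affine opens, hence affine); therefore $(j_i)_*$ is exact on quasi\-coherent sheaves and carries $\A_\qc(U_i)$ into $\A_\qc(X)$.

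The key local point is that $(j_i)_*$ preserves flatness: if $\CG$ is a flat quasi\-coherent sheaf on the affine scheme $U_i$ (say $\CG=\widetilde F$ for a free $\CO_X(U_i)$\-module $F$), then $(j_i)_*\CG$ is a flat $\CO_X$\-module. This is checked on an affine open $V=\spec A$ of $X$: there $U_i\cap V=\spec B$ is again affine (semi\-separatedness), $B$ is flat over $A$ because the open immersion $\spec B\inc\spec A$ is flat, and $\bigl((j_i)_*\CG\bigr)\big|_V\cong\widetilde F$ with $F$ regarded as an $A$\-module through $A\to B$, which is flat since $F$ is flat over $B$ and $B$ is flat over $A$. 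From this one gets enough flats directly: given $\CF\in\A_\qc(X)$, choose for each $i$ a surjection $\CL_i\epi\CF|_{U_i}$ from a free quasi\-coherent sheaf on $U_i$ and form
\[
\bigoplus_{i=1}^{n}(j_i)_*\CL_i\lto\bigoplus_{i=1}^{n}(j_i)_*(\CF|_{U_i})\xrightarrow{\;\varepsilon\;}\CF,
\]
where $\varepsilon$ is the sum of the adjunction counits; the composite is an epimorphism (on each $U_k$ the $k$\-th counit is already an isomorphism onto $\CF|_{U_k}$) whose source is quasi\-coherent and flat over $\CO_X$ (a finite direct sum of flats is flat).

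Now the class of $\CO_X$\-flat objects of $\A_\qc(X)$ is stable under coproducts and under filtered direct limits, and $\A_\qc(X)$ is a Grothendieck category by Lemma \ref{grcat}. Hence Spaltenstein's procedure \cite[\S5]{S} (compare the construction of K\-injective resolutions in an arbitrary Grothendieck category in \cite[\S5]{AJS}, with ``flat'' replacing ``injective'') applies verbatim inside $\A_\qc(X)$: for a general $\CF^\bullet\in\CCC(\A_\qc(X))$ one writes $\CF^\bullet=\varinjlim_m\tau_{\le m}\CF^\bullet$, resolves the canonical truncations $\tau_{\le m}\CF^\bullet$ compatibly by bounded\-above complexes of $\CO_X$\-flat quasi\-coherent sheaves (applying the horseshoe lemma to the short exact sequences $0\to\tau_{\le m}\CF^\bullet\to\tau_{\le m+1}\CF^\bullet\to Q_m^\bullet\to0$, using that $\A_\qc(X)$ has enough flats), and sets $\CP^\bullet_\CF:=\varinjlim_m\CP_m^\bullet$. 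Then $\CP^\bullet_\CF$ has quasi\-coherent, $\CO_X$\-flat terms (filtered colimits of flats are flat), $\CP^\bullet_\CF\to\CF^\bullet$ is a quasi\-isomorphism (filtered colimits are exact in $\A_\qc(X)$), and $\CP^\bullet_\CF$ is K\-flat in $\K(X)$ because each $\CP_m^\bullet$ is (bounded above with flat terms) and tensor product and cohomology commute with filtered colimits, so a filtered colimit of K\-flat complexes is K\-flat.

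I expect the only genuine content, and the exact place where the hypotheses are indispensable, to be the local flatness statement of the second paragraph: semi\-separatedness is precisely what makes $U_i\cap V$ affine, $j_i$ an affine morphism, $(j_i)_*$ exact on quasi\-coherent sheaves, and the ring map $A\to B$ flat; drop it and neither the exactness nor the preservation of flatness survives, and the whole reduction collapses. The homological bookkeeping of the third paragraph is routine Spaltenstein\-style machinery; the one spot that requires a little care is arranging the resolutions $\CP_m^\bullet$ of the successive truncations to be \emph{compatible}, which must be done by hand via the horseshoe lemma since $\A_\qc(X)$ has no projectives through which to lift maps.
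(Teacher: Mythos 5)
Your overall strategy --- show that $\A_\qc(X)$ has enough $\CO_X$-flat objects and then run Spaltenstein's colimit-of-truncations machinery --- is the standard one, and it is essentially the content of \cite[Proposition (1.1)]{AJL1}, which is all the paper itself invokes (observing only that the separatedness used there can be weakened to semi-separatedness via Proposition \ref{sskey}). Your third paragraph is routine and correct in outline, and so is the local flatness computation: for $U_i$ affine and $X$ semi-separated the inclusion $j_i$ is an affine morphism, $(j_i)_*$ is exact and preserves quasi-coherence, and $(j_i)_*$ of a flat quasi-coherent sheaf on $U_i$ is $\CO_X$-flat because each $U_i\cap V$ is affine and the corresponding ring map is flat.

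There is, however, a genuine gap at precisely the step you identify as the crux: the epimorphism onto $\CF$ does not exist as you construct it. For an open immersion $j_i\colon U_i\inc X$ the adjunction is $j_i^*\dashv (j_i)_*$; its counit is a map $j_i^*(j_i)_*\CG\to\CG$ of sheaves \emph{on $U_i$}, while the natural map on $X$ is the unit $\CF\to (j_i)_*(\CF|_{U_i})$, which points the wrong way. In general there is no nonzero map $(j_i)_*(\CF|_{U_i})\to\CF$ at all: for $X=\spec(k[t])$, $U=D(t)$ and $\CF=\CO_X$ one has $\Hom_{\CO_X}(j_*\CO_U,\CO_X)=\Hom_{k[t]}(k[t,t^{-1}],k[t])=0$. (The surjection $\oplus_i\, j_{i!}\,j_i^*\CF\epi\CF$ you may be half-remembering uses extension by zero, whose values are indeed $\CO_X$-flat but are not quasi-coherent, which defeats the purpose.) Producing, for every $\CF\in\A_\qc(X)$, an epimorphism from a flat \emph{quasi-coherent} module is exactly the nontrivial point of \cite[Proposition (1.1)]{AJL1}; it is obtained there by a more involved argument built on the affine inclusions $\lambda_I$ of the intersections of a finite affine cover (the place where semi-separatedness enters, as the paper notes), not from a counit. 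As written, your second paragraph does not establish that $\A_\qc(X)$ has enough flats, and the Spaltenstein machinery of the third paragraph then has nothing to feed on.
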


\begin{proof}
The lemma is a very slight generalization of \cite[Proposition (1.1)]{AJL1}. We have substituted the hypothesis of separated by semi-separated. In the proof  of the cited result it is only used the fact that a finite intersection of affine open subsets of $X$ is affine, as for instance, to assert that the maps $\lambda_i$ (Remark (b) at the end of page 11 in \lc\negthickspace) are affine. But this holds precisely in the semi-separated case as follows from Proposition \ref{sskey}.
\end{proof}

\begin{cosa} Using the previous lemma we are able to define a bifunctor
\[
-\otimes^{\LL}_{\CO_X}\!\!\!- \colon 
\D(\A_\qc(X)) \times \D(\A_\qc(X)) \to \D(\A_\qc(X))
\]
whenever $X$ is a quasi-compact and semi-separated scheme. Indeed, the tensor product of quasi-coherent modules is again quasi-coherent by \cite[Corollaire (1.3.12), (i)]{ega1}\footnote{\cite[Corollaire (1.3.12), (i)]{ega160}}. Therefore if $\CF, \CG \in \D(\A_\qc(X))$ we have
\(
\CF^\bullet \otimes^{\LL}_{\CO_X} \CG^\bullet \cong
\CP^{\bullet}_\CF \otimes_{\CO_X} \CG^\bullet
\) and this last complex is made of quasi-coherent sheaves.
\end{cosa}

\begin{cosa} \label{qctor}
\textbf{The quasi-coherator}.
Recall briefly that, for a quasi-compact and quasi-separated scheme $X$, the canonical inclusion $i \colon \A_\qc(X) \to \A(X)$ has a right adjoint $Q_X \colon \A(X) \to \A_\qc(X)$. The functor $i$ is exact so it induces a $\Delta$-functor $\mathbf{i} \colon \D(\A_\qc(X)) \to \D(X)$ with right adjoint $\R{}Q_X \colon \D(X) \to \D(\A_\qc(X))$. We denote this adjunction by $\mathbf{i} \dashv \R{}Q_X$. Most of the time one omits writing the functor $\mathbf{i}$, leaving it implicit. The definition of $Q_X$ goes back to \cite[p. 187, Lemme 3.2]{I}. One can find its construction also in \cite[B.12, Lemma]{tt}. This functor is sometimes called the \emph{quasi-coherator}. It is clear that the essential image of $\mathbf{i}$ is contained in $\D_\qc(X)$ where by this we denote the full subcategory of $\D(X)$ formed by the complexes whose homologies are quasi-coherent sheaves. If $X$ is a quasi-compact and semi-separated scheme then the induced functor $\D(\A_\qc(X)) \to \D_\qc(X)$ is an equivalence of categories with quasi-inverse induced by $\R{}Q_X$ as is provided by \cite[Corollary 5.5]{BN} or \cite[Proposition (1.3)]{AJL1}. In both references the separated hypothesis can be weakened to semi-separated. 
\end{cosa}

\begin{cosa}
One must be careful with the functor $Q_X$ and its derived functor $\R{}Q_X$, they \emph{do not} commute with localization. If $j \colon U \inc X$ is the canonical inclusion of an open subset $U$ into its ambient scheme $X$, then for $\CF^\bullet \in \D(X)$ the canonical map $j^* \R{}Q_X \CF^\bullet \to \R{}Q_U j^*\CF^\bullet$ need not be an isomorphism, even in simple cases. 

Let us illustrate this fact by an example. First of all note that the functor $Q_X$ being a right adjoint is left exact and therefore $R^0Q_X \CF = Q_X \CF$ with $\CF \in \A(X)$, therefore it is enough to treat the case of $Q_X$. Now consider $X = \spec(R)$ with $R$ a discrete valuation ring. This affine scheme has two points, the generic point $\xi$ corresponding to the zero ideal, and the closed point $x$ corresponding to the maximal ideal. There is just a nonempty open subset different from $X$, $U =\{\xi\}$. To give a sheaf of $\widetilde{R}$-Modules $\CF$ is the same as to give a $R$-linear map $M \to L$ where $M$ is a $R$-module and $L$ is a $K(R)$-vector space where $K(R)$ denotes the field of fractions of $R$. Note that $M = \CF(X)$ and $L = \CF(U)$. The sheaf $\CF$ is quasi-coherent precisely when $M \otimes_R K(R)Ê\cong L$ and the map $M \to L$ is the canonical restriction map. Note that $Q_X \CF$ is the sheaf whose associated map is $M \to M \otimes_R K(R)$ and the map $Q_X \CF \to \CF$ is the obvious one. Take any $\CF$ that is not quasi-coherent. Then the sheaf $Q_U j^*\CF$ corresponds to the $K(R)$-vector space $L$ while $j^*Q_X \CF$ corresponds to $M \otimes_R K(R)$ and the natural map $M \otimes_R K(R)Ê\to L$ is not an isomorphism by assumption. (Note that here Enochs and Estrada's description of sheaves as module representations of the quiver ring $R \to K(R)$ is \emph{isomorphic} to the category of sheaves over $\spec(R)$.)
\end{cosa}

\begin{cosa} \label{inthomdefqc}
\textbf{The internal hom}.
Let $X$ be a quasi-compact and semi-separated scheme. The functor $Q_X$ allows us to introduce an internal hom-functor (or function space functor) in $\A_\qc(X)$ and, deriving it, in $\D(\A_\qc(X))$. It is well-known that for $\CF, \CG \in \A_\qc(X) \subset \A(X)$ it is not guaranteed that $\shom_X(\CF, \CG) \in \A_\qc(X)$. However,  $Q_X\!\shom_X(\CF, \CG)$ is obviously a quasi-coherent sheaf. With this in mind, we define
\[
\dhom^\bullet_X(\CF^\bullet, \CG^\bullet) := 
\R{}Q_X\!\rshom^\bullet_X(\CF^\bullet, \CG^\bullet)
\]
for $\CF^\bullet, \CG^\bullet \in \D(\A_\qc(X))$. In what follows we will often argue by localizing to small open subsets. The definition of $\dhom$ makes it clear that does not localize by the previous discussion on $\R{}Q_X$. In every instance that we need to use a localization argument we will refer the property we want to show to a property of $\shom_X$ that is compatible with localizations (by its definition).
\end{cosa}


\begin{prop}\label{inthom37}
Let $X$ be a quasi-compact and semi-separated scheme. Let $\CF, \CG, \CH \in \D(\A_\qc(X))$ we have a natural isomorphism in $\D(\A_\qc(X))$
\[
 \Hom_{\D(\A_\qc(X))}(\CF, \dhom^\bullet_X(\CG, \CH)) \iso
 \Hom_{\D(\A_\qc(X))}(\CF \otimes^{\LL}_{\CO_X} \CG, \CH).
\]
In other words, we have an adjunction $-\otimes^{\LL}_{\CO_X}\CG \dashv \dhom^\bullet_X(\CG, -)$ in the category  $\D(\A_\qc(X))$.
\end{prop}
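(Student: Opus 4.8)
The strategy is to reduce the claimed adjunction in $\D(\A_\qc(X))$ to the already-known adjunctions at the level of $\D(X)$, namely the derived tensor-internal hom adjunction on a ringed space and the adjunction $\mathbf{i} \dashv \R Q_X$ from \ref{qctor}. First I would unwind the definition of $\dhom^\bullet_X$ and chain together the relevant adjunctions. Explicitly, for $\CF,\CG,\CH \in \D(\A_\qc(X))$, I want to compute
\[
\Hom_{\D(\A_\qc(X))}(\CF, \dhom^\bullet_X(\CG,\CH))
= \Hom_{\D(\A_\qc(X))}(\CF, \R Q_X \rshom^\bullet_X(\CG,\CH)).
\]
Applying the adjunction $\mathbf{i} \dashv \R Q_X$ (here $\rshom^\bullet_X(\CG,\CH)$ is computed in $\D(X)$, using a K-injective resolution of $\CH$, after applying $\mathbf{i}$ to $\CG$ and $\CH$), this equals
\[
\Hom_{\D(X)}(\mathbf{i}\CF, \rshom^\bullet_X(\mathbf{i}\CG, \mathbf{i}\CH)).
\]
Then the classical derived adjunction $-\otimes^{\LL}_{\CO_X}\mathbf{i}\CG \dashv \rshom^\bullet_X(\mathbf{i}\CG,-)$ in $\D(X)$ (see e.g. \cite[\S2.6]{LDC}) turns this into
\[
\Hom_{\D(X)}(\mathbf{i}\CF \otimes^{\LL}_{\CO_X} \mathbf{i}\CG, \mathbf{i}\CH).
\]

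The key remaining point is to identify $\mathbf{i}\CF \otimes^{\LL}_{\CO_X} \mathbf{i}\CG$ with $\mathbf{i}(\CF \otimes^{\LL}_{\CO_X}\CG)$, i.e. to check that the derived tensor product computed inside $\D(\A_\qc(X))$ agrees, under $\mathbf{i}$, with the one computed in $\D(X)$. This is exactly where the quasi-coherent K-flat resolutions of the lemma preceding \ref{qctor} enter: a K-flat resolution $\CP^\bullet_\CF \to \CF^\bullet$ by quasi-coherent sheaves is, after applying the exact functor $i$, still a K-flat resolution in $\K(X)$ (K-flatness is tested against acyclic complexes and is not affected by the embedding, and $i$ is exact so acyclicity and the tensor computation are preserved). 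Hence $\mathbf{i}(\CF \otimes^{\LL}_{\CO_X}\CG) = i(\CP^\bullet_\CF \otimes_{\CO_X} \CG^\bullet) \cong \mathbf{i}\CF \otimes^{\LL}_{\CO_X}\mathbf{i}\CG$ in $\D(X)$. With this identification in hand, the last $\Hom$ group above becomes $\Hom_{\D(X)}(\mathbf{i}(\CF \otimes^{\LL}_{\CO_X}\CG), \mathbf{i}\CH)$, and since $\CF \otimes^{\LL}_{\CO_X}\CG$ lands in $\D(\A_\qc(X))$ and $\mathbf{i}$ is fully faithful onto $\D_\qc(X)$ (again by \ref{qctor}, using semi-separatedness), this equals $\Hom_{\D(\A_\qc(X))}(\CF \otimes^{\LL}_{\CO_X}\CG, \CH)$, as desired.

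Finally I would check that the composite isomorphism is natural in all three variables; this is routine since each of the three adjunction isomorphisms used is natural and the identification of the two derived tensor products is natural in $\CF$ and $\CG$ (one may pass to functorial quasi-coherent K-flat resolutions, or note that both sides represent the same bifunctor $\D(\A_\qc(X))\to\D_\qc(X)$). I expect the main obstacle to be precisely the compatibility of the two derived tensor products under $\mathbf{i}$ — one must be careful that ``K-flat in $\K(\A_\qc(X))$'' is strong enough to give ``K-flat in $\K(X)$'' after applying $i$; but this follows because $i$ is exact and tensor products of quasi-coherent sheaves are computed the same way in $\A_\qc(X)$ as in $\A(X)$, so the issue is purely bookkeeping rather than a genuine difficulty. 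Everything else is a formal chain of adjunctions together with the full faithfulness of $\mathbf{i}$ restricted to $\D(\A_\qc(X))$, which is available precisely because $X$ is quasi-compact and semi-separated.
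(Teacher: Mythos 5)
Your proposal is correct and follows essentially the same route as the paper: the chain $\mathbf{i}\dashv\R{}Q_X$, then the derived tensor--hom adjunction in $\D(X)$ from \cite[(3.5.2) (d)]{LDC}, then full faithfulness of $\mathbf{i}$ onto $\D_\qc(X)$. The only difference is that you make explicit the compatibility $\mathbf{i}(\CF\otimes^{\LL}_{\CO_X}\CG)\cong\mathbf{i}\CF\otimes^{\LL}_{\CO_X}\mathbf{i}\CG$ via quasi-coherent K-flat resolutions, which the paper leaves implicit (having already set it up in the paragraph following the K-flat resolution lemma).
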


\begin{proof}
Consider the following chain of isomorphisms:
\begin{align*}
\Hom_{\D(\A_\qc(X))}(\CF^\bullet, &\dhom^\bullet_X(\CG^\bullet, 
            \CH^\bullet)) \cong \\
    & \cong \Hom_{\D(X)}(\CF^\bullet, \rshom^\bullet_X(\CG^\bullet, \CH^\bullet)) 
                           \tag{$\mathbf{i} \dashv \R{}Q_X$}\\                            
    & \cong \Hom_{\D(X)}(\CF^\bullet \otimes^{\LL}_{\CO_X} \CG^\bullet, 
            \CH^\bullet) 
                          \tag{\cite[(3.5.2), (d)]{LDC}} \\
    & \cong \Hom_{\D(\A_\qc(X))}(\CF^\bullet \otimes^{\LL}_{\CO_X} \CG^\bullet, 
            \CH^\bullet).
\end{align*}
For the first and last isomorphisms note that the canonical embedding functor $\mathbf{i} \colon \D(\A_\qc(X)) \to \D(X)$ is full because it factors through the equivalence of categories $\D(\A_\qc(X)) \to \D_\qc(X)$ in \ref{qctor} and the full embedding $\D_\qc(X) \inc \D(X)$. 
The composed isomorphism is the one we were looking for.
\end{proof}

\begin{rem}
We also have that the adjunction $-\otimes^{\LL}_{\CO_X}\CG \dashv \dhom^\bullet_X(\CG, -)$ holds internally in $\D(\A_\qc(X))$ for every $\CG \in \D(\A_\qc(X))$, \ie we have the following canonical isomorphism for all $\CF$ and $\CH$ in $\D(\A_\qc(X))$ 
\[
 \dhom^\bullet_X(\CF, \dhom^\bullet_X(\CG, \CH)) \iso
 \dhom^\bullet_X(\CF \otimes^{\LL}_{\CO_X} \CG, \CH).
\]
This follows formally from the axioms of closed category, see \cite[Exercise (3.5.3) (e)]{LDC}.
\end{rem}

\begin{thm} \label{asht2}
Let $X$ be a quasi-compact semi-separated scheme. The category $\D(\A_\qc(X))$ has a natural structure of symmetric closed category. In other words, axiom (\ref{scl}) of \ref{axioms} holds.
\end{thm}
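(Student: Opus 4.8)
The plan is to verify directly that the data already assembled in Section \ref{closed} fulfils the axioms of a symmetric monoidal closed category as recalled in \ref{axioms} (following \cite{lajolla}, \cite[Definition A.2.1]{hps}), and nothing more. The tensor bifunctor is $-\otimes^{\LL}_{\CO_X}-$, landing in $\D(\A_\qc(X))$ by the lemma on quasi-coherent K-flat resolutions together with \cite[Corollaire (1.3.12), (i)]{ega1}; the unit object is $\CO_X$ viewed as a complex concentrated in degree $0$; and the internal hom is $\dhom^\bullet_X(-,-)$ from \ref{inthomdefqc}. The required adjunction $-\otimes^{\LL}_{\CO_X}\CG \dashv \dhom^\bullet_X(\CG,-)$ is exactly Proposition \ref{inthom37}, and its internal-hom form is recorded in the remark following it; so the only thing left is to produce the associativity, commutativity, and unit coherence isomorphisms for $\otimes^{\LL}_{\CO_X}$ and to note they satisfy the pentagon and hexagon axioms.

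First I would recall that all of this structure is already available on $\D(X) = \D(\A(X))$: the derived tensor product of complexes of $\CO_X$-modules is a symmetric monoidal closed structure on $\D(X)$ with unit $\CO_X$, associativity, symmetry and unit constraints, and coherence, as developed in \cite[\S 2.5, \S 3.5]{LDC} (in particular \cite[(2.5.7)]{LDC} and \cite[(3.5.2), (3.5.3)]{LDC}). Then I would invoke \ref{qctor}: for $X$ quasi-compact and semi-separated the functor $\mathbf{i}\colon \D(\A_\qc(X)) \to \D_\qc(X)$ is an equivalence of categories, with quasi-inverse induced by $\R{}Q_X$. Since the derived tensor product of complexes of quasi-coherent sheaves stays in $\D_\qc(X)$ (again by \cite[Corollaire (1.3.12), (i)]{ega1} applied termwise to a quasi-coherent K-flat resolution), $\D_\qc(X)$ is a full monoidal subcategory of $\D(X)$ closed under $\otimes^{\LL}_{\CO_X}$ and containing $\CO_X$; hence it inherits a symmetric monoidal structure, and the associativity/commutativity/unit isomorphisms together with all coherence diagrams are simply restricted from $\D(X)$. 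Transporting this structure along the equivalence $\mathbf{i}$ endows $\D(\A_\qc(X))$ with a symmetric monoidal structure for which $\mathbf{i}$ is a symmetric monoidal equivalence.

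It then remains to see that this monoidal structure on $\D(\A_\qc(X))$ is closed with the internal hom $\dhom^\bullet_X$. This is where the quasi-coherator enters in an essential way, and it is the one genuinely non-formal point: on $\D_\qc(X)$ the naive internal hom $\rshom^\bullet_X$ need not preserve quasi-coherence, so one must post-compose with $\R{}Q_X$, and one has to check that the resulting $\dhom^\bullet_X$ is honestly right adjoint to $-\otimes^{\LL}_{\CO_X}\CG$ inside $\D(\A_\qc(X))$. But this has already been done: Proposition \ref{inthom37} gives the bijection
\[
 \Hom_{\D(\A_\qc(X))}(\CF, \dhom^\bullet_X(\CG, \CH)) \iso
 \Hom_{\D(\A_\qc(X))}(\CF \otimes^{\LL}_{\CO_X} \CG, \CH),
\]
natural in all three variables, obtained by combining the adjunction $\mathbf{i} \dashv \R{}Q_X$ with the closed structure \cite[(3.5.2), (d)]{LDC} on $\D(X)$ and the fullness of $\mathbf{i}$. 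Thus $\D(\A_\qc(X))$ is a symmetric monoidal closed category, which is precisely axiom (\ref{scl}) of \ref{axioms}.

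The step I expect to be the main obstacle is conceptual rather than computational: making sure the internal hom of the closed structure is the quasi-coherator-corrected functor $\dhom^\bullet_X$ and not the uncorrected $\rshom^\bullet_X$, and that the adjunction genuinely holds at the level of $\D(\A_\qc(X))$ (equivalently $\D_\qc(X)$) rather than only after applying $\mathbf{i}$ — a subtlety that is real because, as emphasised in the discussion after \ref{qctor}, $\R{}Q_X$ does not commute with open restriction. All of this is however packaged in Proposition \ref{inthom37}, so the proof here is essentially an assembly of \ref{qctor}, Proposition \ref{inthom37} and the monoidal coherence of $\D(X)$ from \cite{LDC}; the pentagon and hexagon identities, and the unit triangle, hold on $\D(\A_\qc(X))$ because they hold on $\D(X)$ and are preserved by the monoidal equivalence $\mathbf{i}$.
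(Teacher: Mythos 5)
Your proposal follows essentially the same route as the paper: obtain the symmetric monoidal structure on $\D(\A_\qc(X))$ by restricting the one on $\D(X)$ from \cite[Examples (3.5.2) (d)]{LDC} to the full subcategory $\D_\qc(X)$ (closed under $\otimes^{\LL}_{\CO_X}$ and containing $\CO_X$), transport it along the equivalence of \ref{qctor}, and then invoke Proposition \ref{inthom37} for the adjunction defining the closed structure with internal hom $\dhom^\bullet_X$. The coherence diagrams come for free by fullness, exactly as you say.

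The one item the paper checks that your write-up omits is the compatibility of this structure with the triangulation, which is part of \cite[Definition A.2.1]{hps} (the reference given in \ref{axioms} alongside \cite{lajolla}): namely that $\otimes^{\LL}_{\CO_X}$ and $\dhom^\bullet_X$ are $\Delta$-functors in each variable, and that the symmetry isomorphism $\theta$ interacts with suspension through the sign rule, i.e.\ the square
\begin{diagram}[height=2em,w=2em,p=0.3em,labelstyle=\scriptstyle]
\CO_X[r] \otimes^{\LL}_{\CO_X} \CO_X[s] & \rTo^{\sim}  &  \CO_X[r + s] \\
\dTo_\theta                             &              & \dTo_{(-1)^{rs}} \\
\CO_X[s] \otimes^{\LL}_{\CO_X} \CO_X[r] & \rTo^{\sim}  &  \CO_X[r + s] \\
\end{diagram}
commutes (condition (4) of \emph{loc.\ cit.}, where $\theta$ corresponds to $T$). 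The paper verifies this explicitly using the sign built into the definition of $\theta$ in \cite[(1.5.4.1)]{LDC} and the fact that $\CO_X$ is K-flat as a complex concentrated in degree $0$. This is a short, essentially formal verification, but it is not subsumed by the Eilenberg--Kelly axioms alone, so you should add it to complete the argument.
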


\begin{proof}
First, the data $(\D(\A_\qc(X)), \otimes^{\LL}_{\CO_X}, \CO_X)$ together with the compatibility diagrams correspond to a monoidal category. Indeed, by \cite[Examples (3.5.2) (d)]{LDC} this is the case for $(\D(X), \otimes^{\LL}_{\CO_X}, \CO_X)$. But the category $\D(\A_\qc(X))$ is equivalent to $\D_\qc(X)$, that is a full subcategory of $\D(X)$. Once we know that the bifunctor $\otimes^{\LL}_{\CO_X}$ restricts to $\D(\A_\qc(X))$, all the diagrams corresponding to the coherence axioms belong to this subcategory. The adjunction $-\otimes^{\LL}_{\CO_X}\CG \dashv \dhom^\bullet_X(\CG, -)$ for every $\CG \in \D(\A_\qc(X))$, holds by Proposition \ref{inthom37}. It is clear that both bifunctors are $\Delta$-functors in either variable. Finally, consider the diagram
\begin{diagram}[height=2em,w=2em,p=0.3em,labelstyle=\scriptstyle]
\CO_X[r] \otimes^{\LL}_{\CO_X} \CO_X[s] & \rTo^{\sim}  &  \CO_X[r + s] \\
\dTo_\theta                             &              & \dTo_{(-1)^{rs}} \\ 
\CO_X[s] \otimes^{\LL}_{\CO_X} \CO_X[r] & \rTo^{\sim}  &  \CO_X[r + s] \\
\end{diagram}
where $\theta$ is defined as in \cite[(1.5.4.1)]{LDC}. With the sign introduced in this definition it is clear that the square is commutative,  having in mind that $\CO_X$ is K-flat considered as a complex concentrated in degree 0. Note that $\theta$ corresponds to $T$ in \cite[Definition A.2.1(4)]{hps}.
\end{proof}

\section{Strongly dualizable generators}

\begin{cosa}\label{compact}
Let $\T$ be a triangulated category. An object $E$ of $\T$ is called \emph{compact} if the functor $\Hom_{\T}(E,-)$ commutes with arbitrary coproducts. Another way of expressing this condition is that a map from $E$ to a coproduct factors through a finite subcoproduct.
\end{cosa}

\begin{prop} \label{cgenss}
Let $X$ be a quasi-compact semi-separated scheme. The category $\D(\A_\qc(X))$ is generated (in the sense of \ref{genadd}) by compact objects.
\end{prop}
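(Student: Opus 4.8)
The plan is to reduce the statement to Neeman's theorem that $\D_\qc(X)$ is compactly generated for a quasi-compact separated (or semi-separated) scheme, transported through the equivalence $\D(\A_\qc(X)) \simeq \D_\qc(X)$ recalled in \ref{qctor}. So the first step is to invoke \cite{BN} (Neeman's ``The Grothendieck duality theorem via Bousfield's techniques and Brown representability'', or equivalently the Bondal--Van den Bergh treatment): for $X$ quasi-compact and semi-separated, $\D_\qc(X)$ is compactly generated, and the compact objects are exactly the perfect complexes. Under the equivalence of \ref{qctor} a perfect complex, being locally on an affine cover isomorphic to a bounded complex of finite-rank free modules, corresponds to a compact object of $\D(\A_\qc(X))$; the point to check is that the equivalence $\D(\A_\qc(X)) \to \D_\qc(X)$ is an equivalence of \emph{triangulated} categories preserving coproducts (coproducts in $\D(\A_\qc(X))$ are computed as in $\D(X)$, by Theorem \ref{asht145}, and likewise in $\D_\qc(X)$), so compactness is reflected.

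Next I would spell out why a set of compact generators exists concretely, so as not to lean entirely on an ``existence'' statement. Cover $X$ by finitely many affine opens $U_1, \dots, U_n$ (possible since $X$ is quasi-compact); on each $U_i$ the structure sheaf $\CO_{U_i}$ generates $\D_\qc(U_i)$, and using the semi-separated hypothesis the (derived) pushforwards along the affine inclusions $U_{i_1} \cap \cdots \cap U_{i_k} \hookrightarrow X$ behave well. The cleanest route is: take a perfect complex $\CK_i$ on $X$ supported on the closed complement $X \setminus U_i$ fitting in a triangle $\CK_i \to \CO_X \to \R j_{i*}\CO_{U_i}$, and let the set of generators be built from the $\CK_i$ and their tensor products/Koszul complexes; alternatively, simply cite that by \cite[Theorem 3.1.1]{BN} (or \cite{tt}) $X$ admits a single perfect complex $P$ with $\supp P = X$ and the finitely many shifts/twists one needs form a generating set. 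Either way the upshot is a \emph{set} $\{E_\alpha\}$ of compact objects.

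Then I would verify the generation condition in the sense of \ref{genadd}: if $\CM \in \D(\A_\qc(X))$ satisfies $\Hom_{\D(\A_\qc(X))}(E_\alpha[n], \CM) = 0$ for all $\alpha$ and all $n \in \ZZ$, then $\CM = 0$. Transporting to $\D_\qc(X)$ and using that the $E_\alpha$ restrict to generators on each affine $U_i$ (where $\Hom(\CO_{U_i}[n], \CM|_{U_i}) = \h^{-n}(\R\Gamma(U_i, \CM|_{U_i})) = \h^{-n}(\CM)(U_i)$ because $U_i$ is affine and we may use a K-injective, hence flasque-enough, representative), one gets $\h^{-n}(\CM)|_{U_i} = 0$ for all $i$ and $n$, hence $\CM$ is acyclic, hence zero in the derived category. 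This also uses that a map from a compact object detects non-vanishing up to shift, i.e. the ``smallest localizing subcategory'' formulation in footnote to \ref{sdg} coincides with the vanishing criterion of \ref{genadd} for a compactly generated triangulated category with coproducts --- a standard fact (Neeman, \cite{Ntc}).

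The main obstacle is not any single hard computation but keeping the bookkeeping honest across the equivalence of \ref{qctor}: one must make sure that ``compact in $\D(\A_\qc(X))$'' and ``compact in $\D_\qc(X)$'' agree (which needs coproducts to match on both sides, handled above) and that Neeman's generation statement, usually phrased for $\D_\qc(X)$, is genuinely available under only the semi-separated hypothesis --- this is exactly the point flagged at the end of \ref{qctor} that the references \cite{BN} and \cite{AJL1} work with semi-separation rather than full separation. Once those identifications are in place the proof is essentially a citation plus the localization argument for the vanishing criterion.
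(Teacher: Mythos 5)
Your core strategy --- reduce to Neeman's compact generation theorem and observe that separatedness enters only through ``intersections of affine opens are affine,'' which Proposition \ref{sskey} supplies under semi-separation --- is exactly the paper's proof, which consists of citing \cite[Proposition 2.5]{Ngd} and making precisely that observation about the Lemma 2.6 used there. Two bookkeeping points: the relevant reference is \cite{Ngd} (Neeman's Brown representability paper), not \cite{BN} (B\"okstedt--Neeman), and Neeman's Proposition 2.5 is already stated for $\D(\A_\qc(X))$ itself, so the detour through the equivalence with $\D_\qc(X)$ and the attendant checks that coproducts and compactness match on both sides, while correct, are not needed here.

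The one genuine problem is in the part you chose to spell out rather than cite. Your verification of generation argues that if $\Hom_{\D(\A_\qc(X))}(E_\alpha[n],\CM)=0$ for all $\alpha$ and $n$, then ``the $E_\alpha$ restrict to generators on each affine $U_i$'' and hence $\h^{-n}(\CM)|_{U_i}=0$. This step does not follow: restriction $j_i^*$ is \emph{left} adjoint to $\R j_{i*}$, so
\[
\Hom_{\D(\A_\qc(U_i))}(j_i^*E_\alpha[n],\, j_i^*\CM)\;\cong\;\Hom_{\D(\A_\qc(X))}(E_\alpha[n],\, \R j_{i*}j_i^*\CM),
\]
and $\R j_{i*}j_i^*\CM$ is not $\CM$, so the global vanishing hypothesis gives no control over these local groups (and there is no quasi-coherent extension-by-zero to run the adjunction the other way). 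Closing this gap is exactly the content of Neeman's Lemma 2.6: one inducts on the number of affines needed to cover $X$, using the localization triangle relating $\CM$, $\R j_*j^*\CM$ and the part of $\CM$ supported on the closed complement, together with the fact that compact objects supported on that complement extend from an open subset to all of $X$. The Koszul-type complexes $\CK_i$ you introduce are the right ingredients, but it is this induction, not a direct restriction argument, that makes them generate. Your fallback of simply citing \cite[Theorem 3.1.1]{bb} or \cite[Proposition 2.5]{Ngd} for the full statement (checking only that semi-separation suffices) is sound, and is what the paper does.
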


\begin{proof}
This is a generalization of \cite[Proposition 2.5]{Ngd}, where it is stated for a quasi-compact, \emph{separated} scheme. The proof of \lc is based on Lemma 2.6 there. Note that the only property of separated schemes that it is used in the proof of the Lemma is that the intersection of open affine subschemes is affine, something that holds when $X$ is just semi-separated by Proposition \ref{sskey}.
\end{proof}

\begin{cosa}
A complex $\CF^\bullet \in \D(\A_\qc(X))$ is called \emph{strongly dualizable} if, and only if, the canonical map
\[
\dhom^\bullet_X(\CF^\bullet, \CO_X) \otimes^{\LL}_{\CO_X} \CG^\bullet \lto
\dhom^\bullet_X(\CF^\bullet, \CG^\bullet)
\]
is an isomorphism for all $\CG^\bullet \in \D(\A_\qc(X))$, \cite[Definition 1.1.2]{hps}.

A complex $\CE^\bullet \in \CCC(X)$ is called \emph{perfect} if for every $x \in X$ there is an open neighborhood $x \in U \subset X$ such that, denoting $j \colon U \inc X$ the canonical inclusion, the complex $j^*(\CE^\bullet)$ is quasi-isomophic to a bounded complex made up of locally free finite-type Modules over $U$.
\end{cosa}

\begin{prop} \label{perfsd}
Let $X$ be a quasi-compact semi-separated scheme. A perfect complex in $\D(\A_\qc(X))$ is strongly dualizable.
\end{prop}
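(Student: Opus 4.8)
The plan is to reduce the statement to a local, purely module-theoretic computation. First I would observe that the question is local on $X$: since $X$ is quasi-compact, it is covered by finitely many affine open subsets, and by the discussion in \ref{inthomdefqc} the functor $\dhom^\bullet_X$ does \emph{not} itself localize, but the \emph{property} of the natural transformation
\[
\dhom^\bullet_X(\CF^\bullet, \CO_X) \otimes^{\LL}_{\CO_X} \CG^\bullet \lto
\dhom^\bullet_X(\CF^\bullet, \CG^\bullet)
\]
being an isomorphism can be referred back to the corresponding transformation for $\rshom_X^\bullet$ via the equivalence $\D(\A_\qc(X)) \simeq \D_\qc(X)$ of \ref{qctor} and the full embedding $\D_\qc(X) \inc \D(X)$. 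Concretely, a map in $\D(\A_\qc(X))$ is an isomorphism iff its image in $\D(X)$ is, and on $\D(X)$ one has the classical (non-quasi-coherent) internal hom $\rshom_X^\bullet$, which \emph{does} commute with restriction to opens. So it suffices to prove that for a perfect complex $\CE^\bullet$ the canonical map
\[
\rshom_X^\bullet(\CE^\bullet, \CO_X) \otimes^{\LL}_{\CO_X} \CG^\bullet \lto
\rshom_X^\bullet(\CE^\bullet, \CG^\bullet)
\]
is an isomorphism in $\D(X)$ for all $\CG^\bullet$, and this may be checked after restricting to each member of an affine open cover.

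Having localized, the argument becomes standard. By the definition of perfect complex, on a suitable affine open $U = \spec(A)$ the restriction $\CE^\bullet|_U$ is quasi-isomorphic to a bounded complex $\CP^\bullet$ of locally free $\CO_U$-modules of finite type; replacing $U$ by a smaller basic affine if necessary we may even take the terms of $\CP^\bullet$ to be free of finite rank. For such a complex the natural map above is the sheafification of the evident map of complexes of $A$-modules, and one reduces by the way-out/induction on the length of $\CP^\bullet$ (using the triangles associated to the stupid truncations $\sigma_{\geq n}\CP^\bullet$) to the case of a single free module of finite rank in one degree — where the map $\Hom_A(A^n,A)\otimes_A M \to \Hom_A(A^n,M)$ is an obvious isomorphism. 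Here one uses that each term of $\CP^\bullet$, being locally free of finite rank, is K-flat, so the derived tensor product is computed naively, and that $\CP^\bullet$ may be used to compute both $\rshom_X^\bullet(\CE^\bullet,-)$ and, being also a complex of flat modules bounded above and below, behaves well under $\otimes^{\LL}$.

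The main obstacle — and the point that must be handled with some care rather than waved away — is the bookkeeping needed to pass between the quasi-coherent internal hom $\dhom^\bullet_X$ and the ambient $\rshom^\bullet_X$, precisely because $\R Q_X$ does not commute with localization. The clean way is the one indicated in \ref{inthomdefqc}: never try to localize $\dhom^\bullet_X$ directly; instead, note that the source and target of the map in question lie in $\D(\A_\qc(X))$, that $\mathbf{i}\colon \D(\A_\qc(X)) \to \D(X)$ is fully faithful with image $\D_\qc(X)$, and that $\mathbf{i}$ carries $\dhom^\bullet_X(\CF^\bullet, \CG^\bullet) = \R Q_X\rshom_X^\bullet(\CF^\bullet, \CG^\bullet)$ to $\rshom_X^\bullet(\CF^\bullet, \CG^\bullet)$ whenever the latter already has quasi-coherent homology — which it does when $\CF^\bullet$ is perfect, since locally $\rshom_X^\bullet(\CE^\bullet, \CG^\bullet)$ is computed by a bounded complex of finite free duals tensored against $\CG^\bullet$ and hence has quasi-coherent cohomology. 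Thus $\mathbf{i}$ identifies the $\dhom$-map with the $\rshom$-map, and the displayed isomorphism follows. A small additional check is that the comparison map is indeed the canonical one, i.e.\ that the identification is compatible with the two sides' canonical maps into $\rshom_X^\bullet(\CE^\bullet, \CG^\bullet)$; this is a routine diagram chase using the adjunction $\mathbf{i} \dashv \R Q_X$ and the projection-type morphism defining the map.
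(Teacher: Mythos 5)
Your proposal is correct and follows essentially the same route as the paper's proof: identify $\dhom^\bullet_X(\CE^\bullet,-)$ with $\rshom^\bullet_X(\CE^\bullet,-)$ via the equivalence $\D(\A_\qc(X))\simeq\D_\qc(X)$, using that $\rshom^\bullet_X(\CE^\bullet,\CG^\bullet)$ has quasi-coherent homology for $\CE^\bullet$ perfect (the paper cites \cite[Theorem 2.4.1]{tt} where you argue it locally), then reduce to the local statement for a bounded complex of finite free modules. Your added details (induction on length via truncations, the compatibility check for the comparison map) only make explicit what the paper leaves as ``clear.''
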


\begin{proof}
Let $\CE^\bullet$ be a perfect complex and $\CG^\bullet \in \D(\A_\qc(X))$. Choose a K-injective resolution $\CG^\bullet \to \CI^{\bullet}_\CG$ in such a way that \[\rshom^\bullet_X(\CE^\bullet, \CG^\bullet) = \shom^\bullet_X(\CE^\bullet, \CI^{\bullet}_\CG).\] Being $\CE^\bullet$ a perfect complex and $\CI^{\bullet}_\CG$ of quasi-coherent homology, it follows that $\shom^\bullet_X(\CE^\bullet, \CI^{\bullet}_\CG)$ has quasi-coherent homology too, by \cite[Theorem 2.4.1]{tt}. Now, using the equivalence of categories $\D(\A_\qc(X)) \iso \D_\qc(X)$ from \ref{qctor} we have that $\dhom^\bullet_X(\CE^\bullet, \CG^\bullet) \cong \rshom^\bullet_X(\CE^\bullet, \CG^{\bullet})$. The same argument applies to $\dhom^\bullet_X(\CE^\bullet, \CO_X)$, so we are reduced to prove that
\[
\rshom^\bullet_X(\CE^\bullet, \CO_X) \otimes^{\LL}_{\CO_X} \CG^\bullet \lto
\rshom^\bullet_X(\CE^\bullet, \CG^\bullet)
\]
is an isomorphism in $\D(X)$. After taking appropriate resolutions, we see that this is a local problem. Restricting to a small enough open set we can assume that each $\CE^i$ are free modules of finite type for every $i \in \ZZ$ and in this case the fact that the map is an isomorphism is clear.
\end{proof}

\begin{cosa} \label{strong}
In \ref{genadd} we have defined what it means for an object of an additive category to be a generator. Note, however that the notion used in the definition of stable homotopy theory is stronger (see \cite[Definition 1.1.4 (c)]{hps}). To distinguish both notions, we will say, for a triangulated category $\T$ in which all coproducts exist, that a set of objects $\CS$ \emph{generates $\T$ in the strong sense} if the smallest triangulated subcategory closed for coproducts that contains $\CS$ is all of $\T$, in accordance with \ref{axioms} (\ref{sdg}) (=\cite[1.1.4 (d)]{hps}). These two notions agree, for the cases we are interested in, by \cite[Lemma 3.2]{Ngd} which says that $\T$ is a triangulated category generated by a set of compact objects $\{T_\lambda \, / \, \lambda \in \Lambda\}$ containing all translations (suspensions) of its members if and only if this set generates $\T$ in the strong sense.
\end{cosa}

\begin{lem}\label{mj} 
Let $X$ be a quasi-compact semi-separated scheme and let $U$ be an affine open subset of $X$ and denote by $j \colon U \inc X$, the canonical inclusion. If $\CE^\bullet$ is a compact object in $\D(\A_\qc(X))$ then its restriction, $j^*\CE^\bullet$, is a compact object in $\D(\A_\qc(U))$.
\end{lem}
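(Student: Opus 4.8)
The plan is to reduce the statement that $j^{*}\CE^{\bullet}$ is compact in $\D(\A_{\qc}(U))$ to the compactness of $\CE^{\bullet}$ in $\D(\A_{\qc}(X))$ by exhibiting $j_{*}$ as a coproduct-preserving right adjoint to $j^{*}$. More precisely, since $U$ is affine and $X$ is semi-separated, the inclusion $j\colon U\inc X$ is an affine morphism (its graph factors through $U\times_{X} X\cong U$, and intersections of affine opens are affine by Proposition \ref{sskey}); in particular $j$ is quasi-compact and separated, so $\R{}j_{*}$ carries $\D(\A_{\qc}(U))$ into $\D(\A_{\qc}(X))$ and, being the derived pushforward along an affine map, it is exact and commutes with arbitrary coproducts of quasi-coherent sheaves (a coproduct of quasi-coherent sheaves pushed forward along an affine map is computed sectionwise on affine opens, hence agrees with the coproduct; equivalently one invokes that $\D(\A_{\qc}(X))$ is compactly generated and $\R{}j_{*}=j_{*}$ has a left adjoint, so it automatically preserves coproducts by Neeman's criterion, but the affine-map description makes it elementary). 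We then have the adjunction $j^{*}\dashv \R{}j_{*}=j_{*}$ between $\D(\A_{\qc}(X))$ and $\D(\A_{\qc}(U))$.

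First I would record this adjunction carefully and note that $j^{*}$ is itself exact and commutes with all coproducts (it is a restriction functor, computed termwise). Then, given a family $\{\CG^{\bullet}_{\lambda}\}_{\lambda\in\Lambda}$ in $\D(\A_{\qc}(U))$, I would run the standard chain of natural isomorphisms:
\begin{align*}
\Hom_{\D(\A_{\qc}(U))}\bigl(j^{*}\CE^{\bullet},\textstyle\coprod_{\lambda}\CG^{\bullet}_{\lambda}\bigr)
&\cong \Hom_{\D(\A_{\qc}(X))}\bigl(\CE^{\bullet},\R{}j_{*}\textstyle\coprod_{\lambda}\CG^{\bullet}_{\lambda}\bigr)
   \tag{$j^{*}\dashv \R{}j_{*}$}\\
&\cong \Hom_{\D(\A_{\qc}(X))}\bigl(\CE^{\bullet},\textstyle\coprod_{\lambda}\R{}j_{*}\CG^{\bullet}_{\lambda}\bigr)
   \tag{$\R{}j_{*}$ preserves $\coprod$}\\
&\cong \textstyle\coprod_{\lambda}\Hom_{\D(\A_{\qc}(X))}\bigl(\CE^{\bullet},\R{}j_{*}\CG^{\bullet}_{\lambda}\bigr)
   \tag{$\CE^{\bullet}$ compact}\\
&\cong \textstyle\coprod_{\lambda}\Hom_{\D(\A_{\qc}(U))}\bigl(j^{*}\CE^{\bullet},\CG^{\bullet}_{\lambda}\bigr)
   \tag{$j^{*}\dashv \R{}j_{*}$}
\end{align*}
and check that the composite is the canonical map, which shows that $\Hom_{\D(\A_{\qc}(U))}(j^{*}\CE^{\bullet},-)$ commutes with coproducts, i.e. $j^{*}\CE^{\bullet}$ is compact.

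The one step that requires genuine care, rather than formal nonsense, is the claim that $\R{}j_{*}$ commutes with arbitrary coproducts; this is exactly where semi-separatedness (via affineness of $j$) enters, since for a non-affine quasi-compact open immersion $\R{}j_{*}$ can fail to be nice. I would justify it either by the affine-morphism argument (for $j$ affine, $j_{*}$ is exact on $\A_{\qc}$ and commutes with direct limits, hence with coproducts, so $\R{}j_{*}=j_{*}$ preserves coproducts of complexes) or, more slickly, by noting that $j^{*}$ preserves compact objects would follow once we know $\R{}j_{*}$ has the coproduct property, and appeal to the dual of Neeman's argument: $\R{}j_{*}$ has the left adjoint $j^{*}$ which preserves compactness of a generating set, whence $\R{}j_{*}$ preserves coproducts. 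Either route closes the gap, and the rest is the purely formal adjunction computation above.
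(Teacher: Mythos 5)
Your proposal is correct and follows essentially the same route as the paper: semi-separatedness of $X$ plus affineness of $U$ make $j$ an affine morphism, so $j_{*}$ is exact, preserves quasi-coherence and commutes with coproducts, and the compactness of $j^{*}\CE^{\bullet}$ then falls out of the standard four-step adjunction computation with $j^{*}\dashv j_{*}$, exactly as in the paper's proof. The only difference is cosmetic: the paper justifies the coproduct-preservation of $j_{*}$ directly from exactness of the underived pushforward along an affine map, which is the more elementary of the two justifications you sketch.
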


\begin{proof}
$X$ is semi-separated and $U$ affine implies that $j$ is an affine map. As a consequence the functor $j_*$ is exact and induces a functor in the derived category without need to derive it. Note that $j_*$ preserves quasi-coherence. Let $\{\CF^{\bullet}_\lambda \, / \,\lambda \in \Lambda\}$ be a set of objects in $\D(\A_\qc(U))$. The canonical morphism $\phi \colon \oplus_{\lambda \in \Lambda} j_*\CF^{\bullet}_\lambda \,\iso\, j_*\oplus_{\lambda \in \Lambda}\CF^{\bullet}_\lambda$ is an isomorphism. Therefore we have the following
\begin{align*}
\oplus_{\lambda \in \Lambda}\Hom&_{\D(\A_\qc(U))}(j^*\CE^\bullet, \CF^{\bullet}_\lambda)
 \cong \\
    & \cong \oplus_{\lambda \in \Lambda}\Hom_{\D(\A_\qc(X))}(\CE^{\bullet}, 
            j_*\CF^{\bullet}_\lambda)
                       \tag{$j^* \dashv j_*$}\\
    & \cong \Hom_{\D(\A_\qc(X))}(\CE^{\bullet}, 
            \oplus_{\lambda \in \Lambda}j_*\CF^{\bullet}_\lambda) 
                       \tag{$\CE^{\bullet}$ compact}\\
    & \cong \Hom_{\D(\A_\qc(X))}(\CE^{\bullet}, 
            j_*\oplus_{\lambda \in \Lambda}\CF^{\bullet}_\lambda) 
                       \tag{$\phi$ isomorphism}\\
    & \cong \Hom_{\D(\A_\qc(U))}(j^*\CE^\bullet, 
            \oplus_{\lambda \in \Lambda}\CF^{\bullet}_\lambda) 
                       \tag{$j^* \dashv  j_*$}.
\end{align*}
So we conclude that $j^*\CE^\bullet$ is a compact object.
\end{proof}

\begin{prop} \label{compsd}
Let $X$ be a quasi-compact semi-separated scheme. An object in $\D(\A_\qc(X))$ is compact if and only if it is a perfect complex.
\end{prop}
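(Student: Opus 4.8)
The plan is to prove the two implications separately, using the characterization of strong duality and compact generation already in hand. For the ``if'' direction, suppose $\CE^\bullet$ is perfect. By Proposition \ref{perfsd} it is strongly dualizable, and it is a general fact in a closed symmetric monoidal triangulated category with a compact unit that every strongly dualizable object is compact: indeed $\Hom_{\D(\A_\qc(X))}(\CE^\bullet,-) \cong \Hom_{\D(\A_\qc(X))}(\CO_X, \dhom^\bullet_X(\CE^\bullet,-)) \cong \Hom_{\D(\A_\qc(X))}(\CO_X, \dhom^\bullet_X(\CE^\bullet,\CO_X)\otimes^{\LL}_{\CO_X} -)$, and since $\CO_X$ is compact (it is a perfect complex) and $-\otimes^{\LL}_{\CO_X} \dhom^\bullet_X(\CE^\bullet,\CO_X)$ commutes with coproducts, the composite commutes with coproducts. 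So the real content is the ``only if'' direction.

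For the ``only if'' direction, suppose $\CE^\bullet \in \D(\A_\qc(X))$ is compact; I must show it is perfect, i.e. locally quasi-isomorphic to a bounded complex of finite free modules. The strategy is to reduce to the affine case by localization: choose a finite affine open cover $\{U_i\}$ of $X$ (possible since $X$ is quasi-compact), with inclusions $j_i \colon U_i \inc X$. By Lemma \ref{mj}, each restriction $j_i^* \CE^\bullet$ is a compact object of $\D(\A_\qc(U_i))$. Since perfectness is a local condition, it suffices to prove that a compact object in $\D(\A_\qc(U))$ for $U = \spec(A)$ affine is perfect, equivalently (via the equivalence $\D(\A_\qc(U)) \cong \D(A)$) that a compact object of $\D(A)$ is a perfect complex of $A$-modules in the usual sense of commutative algebra. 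This last fact is classical: in $\D(A)$ the compact objects are exactly the perfect complexes, i.e. those isomorphic in $\D(A)$ to bounded complexes of finitely generated projective modules — one can invoke this directly, or cite \cite[Theorem 2.4.3]{tt} or \cite[Proposition 6.4]{Ngd}.

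The remaining subtlety — and the step I expect to be the main obstacle — is that compactness does not automatically globalize: having $j_i^*\CE^\bullet$ perfect on each $U_i$ gives perfectness of $\CE^\bullet$ as an object of $\CCC(X)$ (the definition of perfect is purely local, so this is in fact immediate once each restriction is perfect), but I should make sure the restriction $j_i^*$ used in Lemma \ref{mj}, which is a derived pullback, agrees with the naive restriction up to quasi-isomorphism — and it does, since $j_i$ is a flat (indeed open) immersion, so $j_i^*$ is exact and needs no deriving. Concretely the argument runs: $\CE^\bullet$ compact $\Rightarrow$ each $j_i^*\CE^\bullet$ compact in $\D(\A_\qc(U_i))$ by Lemma \ref{mj} $\Rightarrow$ each $j_i^*\CE^\bullet$ is isomorphic in $\D(\A_\qc(U_i))$ to a bounded complex of finite free $\CO_{U_i}$-modules by the affine case $\Rightarrow$ $\CE^\bullet$ is perfect by definition. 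The converse having been dispatched above, this completes the proof. (I would also remark that the affine case itself — compact implies perfect over a ring — is where the genuine homological-algebra work sits, but since it is standard and available in the cited literature I would not reprove it here.)
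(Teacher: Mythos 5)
Your ``only if'' direction (compact $\Rightarrow$ perfect) is essentially the paper's own argument: restrict to affine opens via Lemma \ref{mj}, invoke the affine case (the paper cites \cite[Lemma 4.3]{AJST} where you cite Thomason--Trobaugh or Neeman, which is immaterial), and conclude by the locality of perfection. That half is fine.

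The ``if'' direction is where you diverge from the paper, and there is a genuine gap: you justify the compactness of the unit by writing ``since $\CO_X$ is compact (it is a perfect complex)'' --- but ``perfect implies compact'' is exactly the implication you are in the middle of proving, so this is circular. The compactness of $\CO_X$, i.e.\ the statement that $\Hom_{\D(\A_\qc(X))}(\CO_X,-) \cong \h^0\R\Gamma(X,-)$ preserves coproducts, is not formal; it is the substantive cohomological input for which quasi-compactness and semi-separation are actually needed (finite \v{C}ech cohomological dimension on quasi-coherent complexes, \cite[Corollary (3.9.3.3)]{LDC}). This is precisely the ingredient the paper's direct proof supplies: it first shows by a local induction on the length of $\CE^\bullet$ that $\oplus_\lambda \rshom^\bullet_X(\CE^\bullet,\CF^\bullet_\lambda) \to \rshom^\bullet_X(\CE^\bullet,\oplus_\lambda\CF^\bullet_\lambda)$ is an isomorphism, and then commutes $\R\Gamma(X,-)$ past the coproduct using that corollary. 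Your route through strong dualizability (Proposition \ref{perfsd} plus the formal fact that strongly dualizable objects are compact when the unit is compact, \cite[Theorem 2.1.3]{hps}) is a legitimate and arguably cleaner alternative, but only once you establish the compactness of $\CO_X$ by an independent argument --- for instance the very $\R\Gamma$--coproduct commutation above, or the special case $\CE^\bullet = \CO_X$ of the paper's direct computation. As written, the argument does not close.
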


\begin{proof}
Suppose first that $X$ is affine, namely $X = \spec(A)$. By \cite[Lemma 4.3]{AJST}, in $\D(A\md) \cong \D(\A_\qc(X))$ the compact objects are just the perfect ones.

Now, for general quasi-compact semi-separated $X$, let $\CE^\bullet$ be a compact object. Let $U$ be an affine open subset of $X$ and denote by $j \colon U \inc X$, the canonical inclusion. By Lemma \ref{mj}, $j^*\CE^\bullet$, is a compact object and therefore, by the previous discussion it is perfect. But to be perfect is a local condition, therefore $\CE^\bullet$ is perfect.

Let us see now that perfect implies compact. Let $\CE^\bullet$ be a perfect complex and $\{\CF^\bullet_\lambda \in \D(\A_\qc(X)) \,/\, \lambda \in \Lambda\}$. Consider the canonical map 
\[
\phi \colon
\oplus_{\lambda \in \Lambda} \rshom_X(\CE^\bullet, \CF^\bullet_\lambda) \lto
\rshom_X(\CE^\bullet, \oplus_{\lambda \in \Lambda} \CF^\bullet_\lambda)
\]
We want to show first that $\phi$ is an isomorphism in $\D(X)$, a local question, therefore we may take a point $x \in X$ and an open neighborhood $V \subset X$ of $x$ such that $\CE^\bullet |_V$ is a bounded complex of free finite type modules. Replace $X$ by $V$. If the complex $\CE^\bullet$ has length one then it is trivial. If the complex has length $n > 1$, suppose that $\CE^q$ is the first nonzero object for a certain $q \in \ZZ$. Then there is a distinguished triangle $\CE^q[-q] \to \CE^\bullet \to {\CE'}^\bullet \overset{+}\to$ with ${\CE'}^\bullet$ of length $n - 1$. The isomorphism holds for $\CE^q[-q]$ and  for ${\CE'}^\bullet$ by induction, therefore it has to hold for $\CE^\bullet$. Now, arguing as in \cite[Theorem 2.4.1]{tt} we have that 
$\rshom_X(\CE^\bullet, \CF^\bullet_\lambda) \in \D_\qc(X)$. We have the following chain of canonical isomorphisms
\begin{align*}
\oplus_{\lambda \in \Lambda} &\Hom_{\D(\A_\qc(X))}(\CE^\bullet, \CF^\bullet_\lambda) \cong \\
    & \cong \h^0(\oplus_{\lambda \in \Lambda} \rhom^\bullet_X(\CE^\bullet, \CF^\bullet_\lambda)) 
                   \tag{$\h^0$ commutes with $\oplus$}\\
    & \cong \h^0(\oplus_{\lambda \in \Lambda} \R\Gamma(X, \rshom^\bullet_X(\CE^\bullet, \CF^\bullet_\lambda))) \\
    & \cong \h^0(\R\Gamma(X, \oplus_{\lambda \in \Lambda}\rshom^\bullet_X(\CE^\bullet, \CF^\bullet_\lambda))) 
                   \tag{by \cite[Corollary (3.9.3.3)]{LDC}}\\
    & \cong \h^0(\R\Gamma(X, \rshom^\bullet_X(\CE^\bullet, \oplus_{\lambda \in \Lambda}\CF^\bullet_\lambda)))
                   \tag{via $\phi$}\\
    & \cong \Hom_{\D(\A_\qc(X))}(\CE^\bullet, \oplus_{\lambda \in \Lambda} \CF^\bullet_\lambda). 
\end{align*}
And this shows that $\CE^\bullet$ is compact in $\D(\A_\qc(X))$.
\end{proof}

\begin{rem}
The fact that for a quasi-compact and \emph{separated} scheme $X$, a perfect complex is a compact object in $\D(\A_\qc(X))$ and reciprocally is already stated in \cite[Example 1.13 and Corollary 2.3]{Ngd}.
\end{rem}

\begin{thm}\label{asht3} 
Let $X$ be a quasi-compact semi-separated scheme. Axiom (\ref{sdg}) holds in the category $\D(\A_\qc(X))$. 
\end{thm}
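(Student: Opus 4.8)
The plan is to assemble the pieces already established in the previous sections. Axiom (\ref{sdg}) asks for a set of strongly dualizable generators, and by \ref{strong} (via \cite[Lemma 3.2]{Ngd}) it suffices to exhibit a \emph{set} of compact objects that generates $\D(\A_\qc(X))$ in the ordinary sense of \ref{genadd}, provided each such object is strongly dualizable; moreover we may enlarge the set to be closed under translations at no cost. First I would invoke Proposition \ref{cgenss}: for $X$ quasi-compact and semi-separated, $\D(\A_\qc(X))$ is generated by compact objects. By Proposition \ref{compsd}, those compact objects are precisely the perfect complexes, and by Proposition \ref{perfsd} every perfect complex is strongly dualizable.

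So the argument runs as follows. Let $\CS$ be a set of compact generators of $\D(\A_\qc(X))$ as furnished by Proposition \ref{cgenss}; replacing $\CS$ by $\{\CE^\bullet[n] \,/\, \CE^\bullet \in \CS,\ n \in \ZZ\}$ we may assume $\CS$ is closed under all translations, and it is still a set of compact generators (a shift of a compact object is compact, and the shifted family generates whatever the original did). By Proposition \ref{compsd} each member of $\CS$ is a perfect complex, and by Proposition \ref{perfsd} each member is therefore strongly dualizable. By \cite[Lemma 3.2]{Ngd} (recalled in \ref{strong}), a triangulated category generated by a set of compact objects containing all translations of its members is generated in the strong sense by that set. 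Hence $\CS$ is a set of strongly dualizable generators of $\D(\A_\qc(X))$ in the sense of \ref{axioms} (\ref{sdg}), which is what axiom (\ref{sdg}) demands.

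There is no real obstacle here: every nontrivial ingredient — compact generation (Proposition \ref{cgenss}), the identification of compact objects with perfect complexes (Proposition \ref{compsd}), the strong dualizability of perfect complexes (Proposition \ref{perfsd}), and the reconciliation of the two notions of generator (\ref{strong}) — has already been proved in the excerpt. The only point requiring a word of care is the passage from a generating set of compacts to one closed under suspensions, so that the hypothesis of \cite[Lemma 3.2]{Ngd} is literally met; this is routine. I would therefore present the proof as a short paragraph citing these four results in sequence, with the translation-closure remark stated explicitly so the application of \cite[Lemma 3.2]{Ngd} is clean.
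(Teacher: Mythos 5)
Your proposal is correct and follows essentially the same route as the paper's own proof: compact generation via Proposition \ref{cgenss}, identification of compacts with perfect complexes via Proposition \ref{compsd}, strong dualizability of perfect complexes via Proposition \ref{perfsd}, and the passage to generation in the strong sense via \ref{strong}. The only difference is that you make explicit the (routine) closure of the generating set under translations before invoking \cite[Lemma 3.2]{Ngd}, a point the paper leaves implicit.
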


\begin{proof}
The proof is contained in the previous discussion. Indeed, by Proposition \ref{cgenss}, $\D(\A_\qc(X))$ is compactly generated. By the discussion in \ref{strong} this means that there exists a set of compact objects such that the smallest triangulated subcategory of $\D(\A_\qc(X))$ closed for coproducts is all of $\D(\A_\qc(X))$. Proposition \ref{compsd} tells us that the compact generators are perfect complexes. But perfect complexes are strongly dualizable by Proposition \ref{perfsd}, so we have completed the proof.
\end{proof}

\begin{cor} \label{main}
Let $X$ be a quasi-compact semi-separated scheme. The category $\D(\A_\qc(X))$ is a stable homotopy category in the sense of \cite{hps}. 
\end{cor}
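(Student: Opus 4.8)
The statement is essentially a bookkeeping exercise at this point: a stable homotopy category (in the sense of \cite{hps}) is a triangulated category satisfying the five conditions listed in \ref{axioms}, and each of those has been verified above under exactly the hypotheses of the corollary. So the proof I would write is a short paragraph that cites the relevant results in the order the axioms are listed: axioms (\ref{tri}), (\ref{cop}) and (\ref{brw}) hold by Theorem \ref{asht145} (valid for any scheme, hence a fortiori here); axiom (\ref{scl}), the symmetric closed structure, is Theorem \ref{asht2} (this is where quasi-compactness and semi-separation are first genuinely used, to build $-\otimes^{\LL}_{\CO_X}-$ and $\dhom^\bullet_X$ on $\D(\A_\qc(X))$ and establish the adjunction of Proposition \ref{inthom37}); and axiom (\ref{sdg}), the existence of a system of strongly dualizable generators, is Theorem \ref{asht3}.

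\textbf{A point worth spelling out is why the compact generators really do generate in the strong sense.} Proposition \ref{cgenss} only says $\D(\A_\qc(X))$ is generated by compact objects in the weak sense of \ref{genadd}; to match \cite[Definition 1.1.2]{hps} one needs the stronger notion of \ref{strong}. The bridge is \cite[Lemma 3.2]{Ngd}, already invoked in \ref{strong}: for a triangulated category with coproducts, a set of compact objects closed under suspension generates weakly if and only if it generates in the strong sense. So I would note that one takes the set of perfect complexes (equivalently, compact objects, by Proposition \ref{compsd}), which is closed under translation, and applies that equivalence; then Proposition \ref{perfsd} upgrades these generators to strongly dualizable ones. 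This is exactly the content of Theorem \ref{asht3}, so in the final write-up it suffices to cite that theorem.

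\textbf{I do not expect a real obstacle here} — all the hard work is in Theorems \ref{asht145}, \ref{asht2}, \ref{asht3} and their supporting lemmas. The only thing to be careful about is making sure the hypotheses line up: ``quasi-compact and semi-separated'' is precisely what Theorems \ref{asht2} and \ref{asht3} require, and Theorem \ref{asht145} needs nothing at all, so the corollary's hypotheses are exactly sufficient. Concretely, the proof I would write is:

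\begin{proof}
Combine Theorems \ref{asht145}, \ref{asht2} and \ref{asht3}: they establish, respectively, axioms (\ref{tri}), (\ref{cop}), (\ref{brw}); axiom (\ref{scl}); and axiom (\ref{sdg}) from \ref{axioms}, for $\D(\A_\qc(X))$ when $X$ is quasi-compact and semi-separated. Hence all five axioms of a stable homotopy category in the sense of \cite{hps} hold.
\end{proof}
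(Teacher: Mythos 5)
Your proposal is correct and coincides with the paper's own proof, which reads simply ``Combine Theorems \ref{asht145}, \ref{asht2} and \ref{asht3}.'' The extra remarks you make about upgrading weak generation to strong generation via \cite[Lemma 3.2]{Ngd} are accurate but already absorbed into the proof of Theorem \ref{asht3}, so nothing further is needed.
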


\begin{proof}
Combine Theorems \ref{asht145}, \ref{asht2} and \ref{asht3}.
\end{proof}

\begin{cor}
Let $X$ be a quasi-compact semi-separated scheme. The category $\D(\A_\qc(X))$ is an \emph{unital algebraic} stable homotopy category in the sense of \cite{hps}. 
\end{cor}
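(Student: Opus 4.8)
The plan is straightforward, since Corollary \ref{main} already establishes that $\D(\A_\qc(X))$ is a stable homotopy category; what remains is to check the two extra properties of \cite[Definition 1.1.4]{hps}, namely that the category is \emph{algebraic} (it admits a set of \emph{small} weak generators) and \emph{unital} (its unit object is small). Recall that in the terminology of \cite{hps} an object is \emph{small} precisely when the functor it corepresents commutes with coproducts, i.e.\ it is compact in the sense of \ref{compact}; so the two vocabularies coincide.

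For the algebraic part I would simply assemble the results of Section 4. By Proposition \ref{cgenss} the category $\D(\A_\qc(X))$ is generated, in the sense of \ref{genadd}, by a set of compact objects; closing this set under translations and invoking the equivalence of the two notions of generation recorded in \ref{strong}, one obtains a set of compact ---hence small--- objects that generates $\D(\A_\qc(X))$ in the strong sense. By Proposition \ref{compsd} these generators are exactly the perfect complexes. This is precisely what it means for a stable homotopy category to be algebraic.

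For the unital part, the point is that the tensor unit $\CO_X$, regarded as a complex concentrated in degree $0$, is a globally free $\CO_X$-module of finite rank, hence trivially a perfect complex; by Proposition \ref{compsd} it is therefore compact, i.e.\ small. So the unit object is small and the category is unital.

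I do not anticipate any real obstacle: the statement is an immediate consequence of the compact generation established in Section 4 and of the identification ``compact $=$ perfect'' of Proposition \ref{compsd}. The only thing requiring a little care is to match the vocabulary of \cite{hps} (``small'', ``weak generator'', ``algebraic'', ``unital'') with the one used in the body of this paper, and to isolate the one nontrivial input, namely that $\CO_X$ is perfect ---which is what forces the unit to be small, and which is exactly the feature that fails for the formal-scheme analogue treated in the last section.
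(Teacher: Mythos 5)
Your proposal is correct and follows essentially the same route as the paper: the paper likewise reduces ``algebraic'' to the compactness of the generating set (via Propositions \ref{cgenss} and \ref{compsd}) and ``unital'' to the compactness of the unit $\CO_X$, which it dismisses as clear and which you justify by observing that $\CO_X$ is perfect. Your extra care in matching the vocabulary of \cite{hps} with the paper's and in spelling out why $\CO_X$ is compact only makes the argument more explicit, not different.
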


\begin{proof}
The adjective ``algebraic'' just means that the set of generators is formed by compact objects \cite[after Definition 1.1.4]{hps} and this holds by Proposition \ref{compsd}. And ``unital'' means that the unit for the tensor product bifunctor, $\CO_X$, is compact, which is clear.
\end{proof}

\begin{rem}
Some readers may wonder if given a scheme $X$, replacing the category $\D(\A_\qc(X))$ by $\D_\qc(X)$, the semi-separation hypothesis could be relaxed. It does not seem so, at least for property (ii) ---being a closed category. One should invoke the equivalence mentioned in \ref{qctor}, and again, the semi-separated hypothesis is present. Note that the equivalence implies that in this case $\D_\qc(X)$ is also a stable homotopy theory.
\end{rem}

\begin{rem}
By \cite[Theorem 3.1.1.(2)]{bb} (or \cite[Theorem 4.2.]{LN} with a different construction in the separated case), the category $\D(\A_\qc(X))$ is not only algebraic, but even \emph{monogenic} (\ie generated by a single compact object). Note that in \cite{LN} it is used systematically the equivalence between $\D(\A_\qc(X))$ and $\D_\qc(X)$.
\end{rem}

\section{On the closed structure of $\D(\A_\qc(X))$}

In this section we will discuss a rather technical question, the relationship between the closed structures in $\A_\qc(X)$ and $\D(\A_\qc(X))$ where $X$ denotes a (quasi-compact and semi-separated) scheme. It is independent of the rest of the discussion in the present paper, so it can be skipped if the reader is not interested. In short, the definition in \ref{inthomdefqc} of the internal-hom functor in $\D(\A_\qc(X))$ is \emph{not} as the derived internal-hom functor in $\A_\qc(X)$, as a matter of fact it is defined as a certain composition of functors. This definition is convenient for our proofs. We will show however that, whenever $X$ is quasi-compact and semi-separated, this can be interpreted as the derived functor of the internal hom defined for $\A_\qc(X)$ as long as we derive in the sense of quasi-coherent sheaves \ie with injective quasi-coherent resolutions.

For a scheme $X$, recalling the definitions of \ref{qctor} and abbreviating as usual $\D(X)$ and $\K(X)$ for $\D(\A(X))$ and $\K(\A(X))$, respectively, we have a commutative diagram
\begin{diagram}[height=2.2em,w=3.3em,labelstyle=\scriptstyle]
\K(\A_\qc(X)) & \rTo^{i}          & \K(X) \\
\dTo_{q_\qc}  &                   & \dTo_{q} \\ 
\D(\A_\qc(X)) & \rTo^{\,\mathbf{i}} & \D(X)  \\  
\end{diagram}
where the map $i \colon \K(\A_\qc(X)) \to \K(X)$ is induced from the map $i \colon \A_\qc(X) \to \A(X)$ by additivity and the functors $q_\qc$ and $q$ are the canonical ones from a homotopy category to its derived category.

Let $X$ be a scheme. Let $\CG^\bullet \in \D(\A_\qc(X))$. Let us denote by $\R_\qc(Q_X \circ \shom^\bullet_X(\CG^\bullet,-))$ the derived functor of the functor
\[
Q_X \circ \shom^\bullet_X(\CG^\bullet,-) \colon \K(\A_\qc(X)) \to \D(\A_\qc(X))
\]
\ie for a complex $\CF^\bullet \in \K(\A_\qc(X))$, take a K-injective resolution \emph{in} $\A_\qc(X)$, $\CF^\bullet \to \CI^\bullet_\CF$, then
\[ \R_\qc(Q_X \circ \shom^\bullet_X(\CG^\bullet,\CF^\bullet)) =
Q_X (\shom^\bullet_X(\CG^\bullet,\CI^\bullet_\CF)).
\]

\begin{rem}
The reader should be aware that, in general, it can be very different to take derived functors in the sense of $\D(\A_\qc(X))$ or of $\D(X)$. We will use the notation $\R_\qc$ for right derived functors in $\D(\A_\qc(X))$. For an iluminating discussion and further references \cfr \cite[B.4]{tt}.
\end{rem}

\begin{prop}
Let $X$ be a quasi-compact semi-separated scheme. Let $\CG^\bullet \in \D(\A_\qc(X))$. There is natural tranformation of $\Delta$-functors
\[
\phi_{\CG^\bullet} = \phi \colon 
\R_\qc(Q_X \circ \shom^\bullet_X(\CG^\bullet,-)) \lto
\dhom^\bullet_X(\CG^\bullet,-).
\]
\end{prop}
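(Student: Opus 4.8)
The plan is to construct $\phi$ by comparing the two resolution recipes that define the source and target functors. Fix $\CG^\bullet \in \D(\A_\qc(X))$ and take $\CF^\bullet \in \D(\A_\qc(X))$. On the one hand, $\dhom^\bullet_X(\CG^\bullet,\CF^\bullet) = \R{}Q_X\,\shom^\bullet_X(\CG^\bullet,\CJ^\bullet_\CF)$, where $\CJ^\bullet_\CF$ is a K-injective resolution of $\CF^\bullet$ \emph{in} $\A(X)$ (and $\R{}Q_X$ is then computed by a further K-injective resolution in $\A(X)$ of the complex $\shom^\bullet_X(\CG^\bullet,\CJ^\bullet_\CF)$). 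On the other hand, $\R_\qc(Q_X\circ\shom^\bullet_X(\CG^\bullet,-))(\CF^\bullet) = Q_X\,\shom^\bullet_X(\CG^\bullet,\CI^\bullet_\CF)$, where $\CI^\bullet_\CF$ is a K-injective resolution of $\CF^\bullet$ \emph{in} $\A_\qc(X)$. First I would observe that the image $\mathbf i(\CI^\bullet_\CF)\in\K(X)$ is a complex quasi-isomorphic to $\CF^\bullet$, hence there is a map $\mathbf i(\CI^\bullet_\CF)\to\CJ^\bullet_\CF$ in $\K(X)$, unique up to homotopy, over $\CF^\bullet$ (using that $\CJ^\bullet_\CF$ is K-injective in $\A(X)$). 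Applying $\shom^\bullet_X(\CG^\bullet,-)$ gives a morphism of complexes $\shom^\bullet_X(\CG^\bullet,\CI^\bullet_\CF)\to\shom^\bullet_X(\CG^\bullet,\CJ^\bullet_\CF)$ in $\K(X)$.

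Next I would apply the universal property of the derived functor $\R{}Q_X$. The plain functor $Q_X$ applied to the left-hand complex lands in $\A_\qc(X)$, giving $Q_X\,\shom^\bullet_X(\CG^\bullet,\CI^\bullet_\CF)\in\K(\A_\qc(X))$, whose image in $\D(\A_\qc(X))$ is the source functor evaluated at $\CF^\bullet$; meanwhile there is always a canonical natural map $Q_X(-)\to\R{}Q_X(-)$ (comparison of the underived functor with its derived functor, both viewed in $\D(\A_\qc(X))$). Composing the map induced on $Q_X$ by the morphism of the previous paragraph with this comparison map yields
\[
Q_X\,\shom^\bullet_X(\CG^\bullet,\CI^\bullet_\CF)\lto
\R{}Q_X\,\shom^\bullet_X(\CG^\bullet,\CI^\bullet_\CF)\lto
\R{}Q_X\,\shom^\bullet_X(\CG^\bullet,\CJ^\bullet_\CF)=\dhom^\bullet_X(\CG^\bullet,\CF^\bullet),
\]
which is the desired $\phi_{\CG^\bullet}$ evaluated at $\CF^\bullet$. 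I would then check naturality in $\CF^\bullet$: a morphism $\CF^\bullet\to\CF'^\bullet$ in $\D(\A_\qc(X))$ lifts compatibly to the chosen K-injective resolutions in $\A_\qc(X)$ and in $\A(X)$ (again up to homotopy, by K-injectivity), and all the functors involved ($\shom^\bullet_X(\CG^\bullet,-)$, $Q_X$, $\R{}Q_X$, the comparison map) are natural, so the relevant squares commute in the respective derived categories. Independence of the choices of resolution is the standard argument that any two resolutions are linked by a homotopy-unique comparison morphism, so $\phi$ is well defined.

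Finally, I would verify that $\phi$ is a morphism of $\Delta$-functors, i.e.\ compatible with translation and with the triangle structure. Both $\R_\qc(Q_X\circ\shom^\bullet_X(\CG^\bullet,-))$ and $\dhom^\bullet_X(\CG^\bullet,-)$ are $\Delta$-functors: the first because deriving an additive functor $\K(\A_\qc(X))\to\D(\A_\qc(X))$ that is compatible with shifts and mapping cones yields a $\Delta$-functor, and the second by \ref{inthomdefqc} together with the fact that $\rshom^\bullet_X$ and $\R{}Q_X$ are $\Delta$-functors. One checks that the construction of $\phi$ above is built entirely from morphisms of complexes and the natural comparison transformations, all of which commute with the suspension functor and send mapping cones to mapping cones up to the usual sign conventions; hence $\phi$ commutes with the connecting morphisms of distinguished triangles. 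The main obstacle, and the only place where the hypotheses on $X$ are really used, is ensuring that $\R{}Q_X$ is well behaved on the complexes that appear here: one needs $X$ quasi-compact and semi-separated so that $Q_X$ exists and $\R{}Q_X$ computes the quasi-inverse of the equivalence $\D(\A_\qc(X))\iso\D_\qc(X)$ of \ref{qctor}; this is what guarantees that the comparison map $Q_X(-)\to\R{}Q_X(-)$ is the correct thing to insert and that the target $\dhom^\bullet_X$ is genuinely the internal hom in $\D(\A_\qc(X))$ rather than an ill-behaved composite. Everything else is bookkeeping with resolutions.
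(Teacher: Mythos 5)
Your construction is correct and is essentially the paper's own argument: the paper builds $\phi$ by composing the canonical comparison transformations $\xi \colon q \circ \shom^\bullet_X(\CG^\bullet,-) \to \R{}\shom^\bullet_X(\CG^\bullet,-)\circ q_\qc$ and $\xi' \colon q_\qc \circ Q_X \to \R{}Q_X \circ q$ and then factoring the resulting $\xi''$ through the universal map of the derived functor $\R_\qc$, which, when evaluated on a quasi-coherent K-injective resolution, unwinds to exactly the composite you write down (the two orders in which you and the paper insert the $Q_X \to \R{}Q_X$ comparison agree by naturality). The only difference is packaging: invoking the $2$-universal property of derived functors gives the paper naturality and $\Delta$-functoriality for free, whereas you verify them by hand.
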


\begin{proof}
Here and for the sake of simplicity we leave the functor $\mathbf{i}$ implicit in the formulas.
There are canonical natural transformations
\[
\xi \colon q \circ \shom^\bullet_X(\CG^\bullet,-) \lto 
\R{}\shom^\bullet_X(\CG^\bullet,-) \circ q_\qc
\]
and
\[
\xi' \colon q_\qc \circ Q_X \lto 
\R{}Q_X \circ q.
\]
Together they induce the natural transformation
\[
\xi'' \colon q_\qc \circ Q_X \circ \shom^\bullet_X(\CG^\bullet,-) \lto 
\R{}Q_X \circ \R{}\shom^\bullet_X(\CG^\bullet,-) \circ q_\qc
\]
but by the 2-universal property of derived functors this should factor through the universal map
\[
\eta \colon q_\qc \circ Q_X \circ \shom^\bullet_X(\CG^\bullet,-) \lto 
\R_\qc(Q_X \circ \shom^\bullet_X(\CG^\bullet,-)) \circ q_\qc
\]
by a natural transformation
\[
\phi \colon \R_\qc(Q_X \circ \shom^\bullet_X(\CG^\bullet,-))
\lto
\R{}Q_X \circ \R{}\shom^\bullet_X(\CG^\bullet,-)
\]
which is our desired map.
\end{proof}

\begin{cosa} \label{quienesson}
Let $\CK^\bullet \in \K(\A_\qc(X))$ and $\CK^\bullet \to \CI^\bullet_\CK$
be a K-injective resolution \emph{in} $\A(X)$, we have a commutative diagram
\begin{diagram}[height=2em,w=2em,labelstyle=\scriptstyle]
\CK^\bullet &                 & \rTo^{\alpha}      & &\CI^\bullet_\CK \\
            & \rdTo_{\alpha'} &                    & \ruTo_{\gamma} & \\
            &                 & Q_X\CI^\bullet_\CK & &\\  
\end{diagram}
where $\gamma$ is the counit of the adjunction $i \dashv Q_X$ and $\alpha'$ is the map adjoint to $\alpha$. Note that we omit the functor $i$ for simplicity. Now, $\alpha$ is a quasi-isomorphism and so is $\gamma$ because $\CI^\bullet_\CK$ has quasi-coherent homology \cite[Proposition (1.3)]{AJL1}, therefore, by the commutativity, $\alpha'$ is a quasi-isomorphism, too. It is clear that $Q_X\CI^\bullet_\CK$ is K-injective in $\K(\A_\qc(X))$. Indeed, let $\CA^\bullet$ be an acyclic object of $\K(\A_\qc(X))$, then
\[
\Hom_{\K(\A_\qc(X))}(\CA^\bullet, Q_X\CI^\bullet_\CK) \cong 
\Hom_{\K(X)}(i\CA^\bullet, \CI^\bullet_\CK) = 0
\]
the last equality due to the fact that the functor $i$ is exact ---therefore $i\CA^\bullet$ is acyclic--- and that $\CI^\bullet_\CK$ is K-injective.

There is a commutative diagram
\begin{diagram}[height=2em,w=2em,labelstyle=\scriptstyle]
Q_X(\shom^\bullet_X(\CG^\bullet,\CK^\bullet)) & & \rTo^{\xi''} & & 
\dhom^\bullet_X(\CG^\bullet,\CK^\bullet) \\
       & \rdTo(1,2)_{\eta} &                    & \ruTo(1,2)_{\phi} & \\ 
       &    & \R_\qc(Q_X \shom^\bullet_X(\CG^\bullet,\CK^\bullet)) & &\\  
\end{diagram}
that may be described as follows. The map $\eta$ is obtained applying the functor $Q_X \circ \shom^\bullet_X(\CG^\bullet,-)$ to $\alpha'$. To make $\xi''$ explicit, we first apply the functor $\shom^\bullet_X(\CG^\bullet,-)$ to $\alpha$ and obtain 
\[
Q_X(\xi) \colon Q_X(\shom^\bullet_X(\CG^\bullet,\CK^\bullet)) \to 
Q_X(\R{}\shom^\bullet_X(\CG^\bullet,\CK^\bullet))
\]
then we let $\CH^\bullet := \R{}\shom^\bullet_X(\CG^\bullet,\CK^\bullet)$ and $\alpha'' \colon \CH^\bullet \to \CI^\bullet_\CH$ a K-injective resolution. Applying $Q_X$ to this map we obtain \(\xi' \colon Q_X(\CH^\bullet) \to \R{}Q_X(\CH^\bullet)\). Now our desired description is $\xi'' = \xi' \circ Q_X(\xi)$.
Note that $\phi$ is the unique map that makes the diagram commute.
\end{cosa}

\begin{lem} \label{agaf}
Let $X = \spec(A)$ be an affine scheme. Then, the natural transformation $\phi_{X,\CG^\bullet}$ is an isomorphism for any $\CG^\bullet \in \D(\A_\qc(X)) \cong \D(A)$.
\end{lem}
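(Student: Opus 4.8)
The plan is to reduce everything to the affine case $X=\spec(A)$, where $\A_\qc(X)\cong A\md$, and show that $\phi$ is an isomorphism of functors $\D(A)\to\D(A)$. First I would recall that over an affine scheme the quasi-coherator $Q_X$ is essentially the global-sections functor composed with the sheafification $\widetilde{(-)}\colon A\md\to\A_\qc(X)$; more precisely, for $\CK\in\A(X)$ one has $Q_X\CK\cong\widetilde{\Gamma(X,\CK)}$, and under the equivalence $A\md\iso\A_\qc(X)$ the functor $Q_X\circ i$ is the identity. Consequently, taking a K-injective resolution $\CF^\bullet\to\CI^\bullet_\CF$ \emph{in} $\A_\qc(X)$, the complex $\R_\qc(Q_X\circ\shom^\bullet_X(\CG^\bullet,\CF^\bullet))$ is computed as $Q_X\shom^\bullet_X(\CG^\bullet,\CI^\bullet_\CF)$, which under the equivalence becomes $\R\Hom^\bullet_A(\Gamma(X,\CG^\bullet),-)$ applied to the corresponding $A$-module resolution.

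Next I would identify the target. By the definition in \ref{inthomdefqc}, $\dhom^\bullet_X(\CG^\bullet,\CF^\bullet)=\R Q_X\,\rshom^\bullet_X(\CG^\bullet,\CF^\bullet)$; over an affine scheme, using the equivalence $\D(\A_\qc(X))\iso\D_\qc(X)$ from \ref{qctor} (here both sides are genuinely equivalent to $\D(A)$, since $X$ is affine hence semi-separated), this is again $\R\Hom^\bullet_A(\Gamma(X,\CG^\bullet),-)$, via the classical fact that for a module $M$ over $A$ and $\CM=\widetilde M$, $\R\shom_X(\CM,-)$ computes $\R\Hom_A(M,-)$ on quasi-coherent targets (this is a special case of \cite[Theorem 2.4.1]{tt}, already invoked in the proof of Proposition \ref{perfsd}). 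So abstractly both functors compute the same derived Hom; the content of the lemma is that the \emph{specific map} $\phi$ constructed via the 2-universal property is the comparison isomorphism, not merely that source and target are abstractly isomorphic.

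To pin down that $\phi$ itself is an isomorphism, I would use the explicit description of $\phi$, $\eta$ and $\xi''$ given in \ref{quienesson}. Fix $\CK^\bullet\in\K(\A_\qc(X))$ and a K-injective resolution $\CK^\bullet\xto{\alpha}\CI^\bullet_\CK$ in $\A(X)$; then $Q_X\CI^\bullet_\CK$ is a K-injective resolution \emph{in} $\A_\qc(X)$ (shown in \ref{quienesson}), so $\R_\qc(Q_X\shom^\bullet_X(\CG^\bullet,\CK^\bullet))$ is represented by $Q_X\shom^\bullet_X(\CG^\bullet,Q_X\CI^\bullet_\CK)$, while $\dhom^\bullet_X(\CG^\bullet,\CK^\bullet)$ is represented by $\R Q_X\shom^\bullet_X(\CG^\bullet,\CI^\bullet_\CK)$. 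The key point to check is that over an affine scheme the natural map
\[
Q_X\shom^\bullet_X(\CG^\bullet,Q_X\CI^\bullet_\CK)\lto Q_X\shom^\bullet_X(\CG^\bullet,\CI^\bullet_\CK)
\]
induced by the counit $\gamma\colon i Q_X\CI^\bullet_\CK\to\CI^\bullet_\CK$ (which is a quasi-isomorphism since $\CI^\bullet_\CK$ has quasi-coherent homology, by \cite[Proposition (1.3)]{AJL1}) is a quasi-isomorphism, and that $\R Q_X$ here requires no further correction because $\shom^\bullet_X(\CG^\bullet,\CI^\bullet_\CK)$ already has quasi-coherent homology (again \cite[Theorem 2.4.1]{tt}, using that $\CG^\bullet\in\D_\qc(X)$ and $X$ is affine). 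Chasing the commutative triangle in \ref{quienesson} then forces $\phi$ to be the induced isomorphism.

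\textbf{Main obstacle.} The delicate step is the last one: verifying that the map induced by $\gamma$ on $Q_X\shom^\bullet_X(\CG^\bullet,-)$ is a quasi-isomorphism. The issue is that $\shom^\bullet_X(\CG^\bullet,-)$ need not preserve quasi-isomorphisms between arbitrary complexes — one must exploit that both $Q_X\CI^\bullet_\CK$ and $\CI^\bullet_\CK$ are K-injective (the first in $\A_\qc(X)$, the second in $\A(X)$), so that $\shom^\bullet_X(\CG^\bullet,-)$ applied to them computes $\rshom^\bullet_X$, and that $\gamma$ is a quasi-isomorphism of K-injectives hence a homotopy equivalence in the appropriate sense after applying $Q_X$. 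Once this is unwound, the affine case is essentially the statement that all the ambiguity in "derive in $\A_\qc$ versus derive in $\A$" collapses when $X$ is affine, which is exactly the phenomenon recorded in \cite[B.4]{tt}.
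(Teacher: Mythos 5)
Your reduction to the affine picture and your treatment of the \emph{source} of $\phi$ are essentially the paper's: over $X=\spec(A)$ one has $Q_X=\widetilde{\Gamma(X,-)}$, the complex $Q_X\CI^\bullet_\CK$ is a K-injective resolution in $\K(\A_\qc(X))$, and the adjunction $i\dashv Q_X$ (plus full faithfulness on quasi-coherents) identifies $\Hom^\bullet_X(\CG^\bullet,Q_X\CI^\bullet_\CK)$ with $\Hom^\bullet_X(\CG^\bullet,\CI^\bullet_\CK)$ as complexes. The gap is on the \emph{target} side. You justify that ``$\R{}Q_X$ requires no further correction'' by claiming that $\shom^\bullet_X(\CG^\bullet,\CI^\bullet_\CK)$ has quasi-coherent homology by \cite[Theorem 2.4.1]{tt}; but that theorem requires the first argument to be \emph{perfect} (or at least pseudo-coherent), whereas the lemma is asserted for arbitrary $\CG^\bullet\in\D(\A_\qc(X))$. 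For a quasi-coherent module $G$ that is not finitely presented, already $\shom_X(\widetilde{G},\widetilde{N})$ fails to be quasi-coherent and the sheaves $\sext^i_X(\widetilde{G},\widetilde{N})$ do not localize to $\ext^i_A(G,N)$ --- this is precisely why $\R{}Q_X$ appears in the definition of $\dhom^\bullet_X$ and why the lemma has content. The same false ``classical fact'' underlies your identification of $\dhom^\bullet_X(\CG^\bullet,-)$ with $\widetilde{\R{}\Hom^\bullet_A(\Gamma(X,\CG^\bullet),-)}$, so at that point you are assuming what is to be proved.

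The paper closes this gap with a device you do not use: choose a K-projective resolution $P^\bullet\to\Gamma(X,\CG^\bullet)$ in $\K(A)$ and set $\CP^\bullet:=\widetilde{P^\bullet}$. Since $\CP^\bullet$ is K-flat and $\CI^\bullet_\CK$ is K-injective, $\shom^\bullet_X(\CP^\bullet,\CI^\bullet_\CK)$ is K-injective in $\K(X)$; hence the quasi-isomorphism $\shom^\bullet_X(\CG^\bullet,\CI^\bullet_\CK)\to\shom^\bullet_X(\CP^\bullet,\CI^\bullet_\CK)$ is itself a K-injective resolution, and $\R{}Q_X\rshom^\bullet_X(\CG^\bullet,\CK^\bullet)$ is computed as $\widetilde{\Hom^\bullet_X(\CP^\bullet,\CI^\bullet_\CK)}$ with no quasi-coherence hypothesis on the homology of $\rshom^\bullet_X$. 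The map $\phi$ is then identified with the composite
\[
\widetilde{\Hom^\bullet_X(\CG^\bullet, Q_X\CI^\bullet_\CK)} \,\liso\,
\widetilde{\Hom^\bullet_X(\CG^\bullet, \CI^\bullet_\CK)}    \,\liso\,
\widetilde{\Hom^\bullet_X(\CP^\bullet, \CI^\bullet_\CK)},
\]
the second arrow being a quasi-isomorphism because $\CI^\bullet_\CK$ is K-injective and $\widetilde{(-)}$ is exact on $A$-modules. Your argument becomes correct once the appeal to \cite[Theorem 2.4.1]{tt} is replaced by this resolution of the first variable.
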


\begin{proof}
Let $\CK^\bullet \in \K(\A_\qc(X))$ as in \ref{quienesson} and $\CK^\bullet \to \CI^\bullet_\CK$ be a K-injective resolution in $\K(X)$. Note first that $Q_X(-) = \widetilde{\Gamma(X, -)}$ and being $X$ affine it is \emph{exact} whenever it is restricted to quasi-coherent sheaves. Take $P^\bullet \to \Gamma(X, \CG^\bullet)$ a K-projective resolution in $\K(A)$ and let $\CP^\bullet := \widetilde{P^\bullet}$. The map $\phi$ in the last diagram from \ref{quienesson} is identified with the composition of isomorphisms:
\[
\widetilde{\Hom^\bullet_X(\CG^\bullet, Q_X\CI^\bullet_\CK)} \,\liso\,
\widetilde{\Hom^\bullet_X(\CG^\bullet, \CI^\bullet_\CK)}    \,\liso\, 
\widetilde{\Hom^\bullet_X(\CP^\bullet, \CI^\bullet_\CK)}
\]
thus $\phi$ is an isomorphism in $\D(\A_\qc(X))$.
\end{proof}
\begin{lem} \label{agfst}
Let $X$ be a quasi-compact semi-separated scheme, $U$ a quasi-compact open subset of $X$ and $u \colon U \inc X$ the canonical embedding. There is a natural isomorphism $\mathbf{i}_X \circ \R_\qc{}u_{*} \,\iso\, \R{}u_{*} \circ \mathbf{i}_U$. (Notation as in the beginning of the section).
\end{lem}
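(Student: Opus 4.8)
The plan is to exploit the equivalences of categories from \ref{qctor}, which reduce the comparison of $\mathbf{i}_X\circ\R_\qc u_*$ and $\R u_*\circ\mathbf{i}_U$ to a statement entirely inside $\D(X)$ and $\D(U)$. First I would recall that for a quasi-compact quasi-separated $X$ the functor $\mathbf i_X\colon\D(\A_\qc(X))\to\D_\qc(X)$ is an equivalence, with quasi-inverse induced by $\R Q_X$; likewise for $U$, which is quasi-compact and (being an open subscheme of a semi-separated scheme) semi-separated by Proposition \ref{covss}. Under these identifications $\R_\qc u_*$ is, by definition, the composite $\R Q_X\circ \R u_*\circ \mathbf i_U$, where $\R u_*\colon\D(U)\to\D(X)$ is the usual (non-quasi-coherent) derived pushforward. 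So the claim becomes: the canonical map $\mathbf i_X\,\R Q_X\,\R u_*\,\mathbf i_U \to \R u_*\,\mathbf i_U$ (the counit of $\mathbf i_X\dashv\R Q_X$ applied to $\R u_*\mathbf i_U(-)$) is an isomorphism.

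The key point is then that this counit is an isomorphism precisely on objects of $\D_\qc(X)$, and that $\R u_*$ carries $\D_\qc(U)$ into $\D_\qc(X)$. For the first fact: the counit $\mathbf i_X\R Q_X\CF^\bullet\to\CF^\bullet$ is an isomorphism for every $\CF^\bullet\in\D_\qc(X)$, because $\mathbf i_X$ and $\R Q_X$ are mutually quasi-inverse equivalences between $\D(\A_\qc(X))$ and $\D_\qc(X)$ (this is exactly the content of \cite[Corollary 5.5]{BN} / \cite[Proposition (1.3)]{AJL1} as invoked in \ref{qctor}). For the second fact: $u$ is a quasi-compact (indeed quasi-compact and separated, even affine when one localizes further) morphism, so $\R u_*$ preserves quasi-coherence of homology — this is the standard base-change/quasi-coherence statement, provable by reducing to the case where $X$ is affine, covering $U$ by finitely many affines, and using the \v{C}ech resolution together with the fact that on an affine scheme higher cohomology of a quasi-coherent sheaf and of its restriction to an affine open agree with the module-theoretic computation; here semi-separatedness of $X$ guarantees finite intersections of the affines covering $U$ are again affine, so the \v{C}ech complex consists of pushforwards of quasi-coherent sheaves along affine maps. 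Combining the two facts, for $\CG^\bullet\in\D(\A_\qc(U))$ we have $\R u_*\,\mathbf i_U\CG^\bullet\in\D_\qc(X)$, hence the counit at this object is an isomorphism, which is exactly the asserted natural isomorphism $\mathbf i_X\circ\R_\qc u_*\iso\R u_*\circ\mathbf i_U$. Naturality and compatibility with the triangulated ($\Delta$-functor) structure are inherited from the naturality of the counit and from the fact that all functors involved are $\Delta$-functors.

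The main obstacle is the verification that $\R u_*$ preserves quasi-coherent homology in the generality needed here — i.e.\ for an arbitrary quasi-compact open immersion into a quasi-compact semi-separated scheme, with \emph{unbounded} complexes. The boundedness issue is handled by the existence of K-injective (or, dually, suitable K-flat / \v{C}ech) resolutions in $\A_\qc$ together with the fact that on a quasi-compact semi-separated scheme cohomology can be computed by a \emph{finite} \v{C}ech complex, so there is no unbounded-cohomological-dimension pathology; one then reduces, exactly as in the bounded case, to the affine-target situation and the classical computation. It is precisely at this reduction that semi-separatedness (finite intersections of affines are affine, Proposition \ref{sskey}) is used, and it is worth flagging that without it the \v{C}ech argument would break down.
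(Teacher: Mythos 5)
There is a genuine gap, and it sits in the sentence ``Under these identifications $\R_\qc u_*$ is, by definition, the composite $\R{}Q_X\circ \R{}u_*\circ \mathbf{i}_U$.'' That is not the definition. As the remark preceding the lemma states, $\R_\qc$ denotes right derivation \emph{inside} $\D(\A_\qc(-))$: $\R_\qc u_*\CK^\bullet$ is computed by choosing a K-injective resolution of $\CK^\bullet$ in $\K(\A_\qc(U))$ and applying the (underived) functor $u_*$ to it. The entire content of the lemma is precisely that this intrinsic derived functor agrees with $\R{}Q_X\circ\R{}u_*\circ\mathbf{i}_U$; by declaring the two equal ``by definition'' you have assumed the nontrivial step. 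The difficulty you are skipping is that a K-injective complex in $\K(\A_\qc(U))$ need not be K-injective in $\K(U)$, so it is not automatic that applying $u_*$ to a quasi-coherent K-injective resolution computes anything related to the full derived pushforward $\R{}u_*$ on $\D(U)$.

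The paper's proof is aimed exactly at this point: starting from a K-injective resolution $\mathbf{i}_U\CK^\bullet\to\CI^\bullet$ in $\K(U)$, it observes that $\CK^\bullet\to Q_U\CI^\bullet$ is a K-injective resolution in $\K(\A_\qc(U))$ (so $\R_\qc u_*\CK^\bullet\cong u_*Q_U\CI^\bullet$), then uses the compatibility $u_*Q_U\cong Q_Xu_*$ from \cite[Appendix B.13]{tt} together with the K-injectivity of $u_*\CI^\bullet$ to identify this with $\R{}Q_X\R{}u_*\mathbf{i}_U\CK^\bullet$, and only then concludes via \cite[Proposition (1.3)]{AJL1}. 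Your two supporting facts --- that the counit $\mathbf{i}_X\R{}Q_X\to\mathrm{id}$ is an isomorphism on $\D_\qc(X)$ and that $\R{}u_*$ preserves quasi-coherence of homology for a quasi-compact open immersion --- are correct and are indeed needed for the final step, but they only dispose of the problem once the identification $\R_\qc u_*\cong\R{}Q_X\circ\R{}u_*\circ\mathbf{i}_U$ has been established. To repair the argument you must add the comparison of the two kinds of K-injective resolutions and the commutation of the quasi-coherator with $u_*$ (or some equivalent device); as written, the proof proves a different, essentially tautological statement.
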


\begin{proof}
By \cite[Proposition (1.3)]{AJL1} it is enough to show that $\R_\qc{}u_{*} \,\iso\, \R{}Q_X \circ \R{}u_{*} \circ \mathbf{i}_U$. Let $\CK^\bullet \in \K(\A_\qc(U))$ and let $\mathbf{i}_U \CK^\bullet \to \CI^\bullet$ be a K-injective resolution (in $\K(U)$). The induced map $\CK^\bullet \to Q_U\CI^\bullet$ is a K-injective resolution in $\K(\A_\qc(U))$. We have the following isomorphisms
\begin{align*}
\R_\qc{}u_{*}\CK^\bullet & \cong u_{*}Q_U\CI^\bullet  \\
    & \cong Q_X u_{*}\CI^\bullet 
               \tag{\cite[Appendix B.13]{tt}}\\
    & \cong \R{}Q_X (u_{*}\CI^\bullet)
               \tag{$u_*\CI^\bullet$ K-injective}\\
    & \cong \R{}Q_X \R{}u_{*} \mathbf{i}_U\CK^\bullet .
               \tag{$\CI^\bullet$ is a resolution of $\mathbf{i}_U\CK^\bullet$}
\end{align*}
and our result follows. 
\end{proof}


\begin{thm}
Let $X$ be a quasi-compact semi-separated scheme. Then, the natural transformation $\phi_{\CG^\bullet}$ is an isomorphism for any $\CG^\bullet \in \D(\A_\qc(X))$.
\end{thm}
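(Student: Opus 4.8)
The plan is to reduce the statement, which is global in nature, to the affine case already handled in Lemma \ref{agaf}, by means of a local-to-global argument. Since $X$ is quasi-compact and semi-separated, fix a finite affine open cover $\{U_\alpha\}_{\alpha \in L}$. All the open subsets arising by finite intersections are affine (hence, in particular, quasi-compact), by Proposition \ref{sskey}. Both functors involved, $\R_\qc(Q_X \circ \shom^\bullet_X(\CG^\bullet,-))$ and $\dhom^\bullet_X(\CG^\bullet,-)$, are $\Delta$-functors on $\D(\A_\qc(X))$, and $\phi_{\CG^\bullet}$ is a natural transformation of $\Delta$-functors; to prove it is an isomorphism it suffices to prove that, after applying $j_U^*$ for every $U$ in the cover (and its intersections), the map becomes an isomorphism, and then glue by a Mayer--Vietoris (\v{C}ech) descent argument as in the proof of \cite[Theorem 2.4.1]{tt}. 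The point is that the target $\shom^\bullet_X$ is compatible with localization by construction, and Lemma \ref{agfst} provides the missing compatibility of $\R_\qc$ with restriction through direct images along the inclusions of the (quasi-compact) members of the cover.

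Concretely, first I would show that for the inclusion $j \colon U \inc X$ of an affine (more generally, quasi-compact) open subset, there is a base-change type isomorphism
\[
 j^* \, \R_\qc\bigl(Q_X \circ \shom^\bullet_X(\CG^\bullet, \CK^\bullet)\bigr) \,\iso\,
 \R_\qc\bigl(Q_U \circ \shom^\bullet_U(j^*\CG^\bullet, j^*\CK^\bullet)\bigr),
\]
compatible with $\phi$. For this one resolves $\CK^\bullet$ by a K-injective complex $\CI^\bullet$ in $\A(X)$, so $Q_X\CI^\bullet$ is K-injective in $\A_\qc(X)$ (as in \ref{quienesson}); one then checks $j^*\shom^\bullet_X(\CG^\bullet,-) \cong \shom^\bullet_U(j^*\CG^\bullet, j^*(-))$ on the level of complexes, uses that $j^*$ is exact and sends K-injectives of $\A_\qc(X)$ to complexes with K-injective $\A_\qc(U)$-resolution controlled by Lemma \ref{agfst}, and that $j^*Q_X \to Q_U j^*$, while not an isomorphism in general, \emph{is} an isomorphism on the relevant complexes because the homology of $\shom^\bullet_X(\CG^\bullet,\CI^\bullet)$ is quasi-coherent by \cite[Theorem 2.4.1]{tt}. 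With this compatibility in hand, and since $\phi_{U,j^*\CG^\bullet}$ is an isomorphism by Lemma \ref{agaf} for each affine $U$, the source and target of $\phi_{\CG^\bullet}$ agree after restriction to every member of the affine cover and to every finite intersection thereof.

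To conclude globally I would invoke the standard descent machinery: both functors, being built from $\R Q$ and $\R\shom$, satisfy \v{C}ech descent for the finite affine cover $\{U_\alpha\}$ (this is where semi-separatedness is used, so that all the intersections are again affine, and one can use the \v{C}ech resolution $\CG^\bullet \otimes^{\LL} \check{\mathcal{C}}^\bullet(\mathbf{U})$, or equivalently a Mayer--Vietoris induction on the number of opens in the cover, exactly as in the proof of Proposition \ref{compsd}). A natural transformation of $\Delta$-functors between two functors satisfying such descent which is an isomorphism on each $U_\alpha$ and on the intersections is an isomorphism; formally, this follows by induction on $\#L$ using the Mayer--Vietoris triangle and the five lemma in the triangulated category $\D(\A_\qc(X))$, bootstrapping from the affine case.

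The main obstacle I anticipate is precisely the non-localizing behaviour of $Q_X$ and $\R Q_X$, emphasized in the paragraph after \ref{qctor}: one cannot naively write $j^*\R_\qc(Q_X\circ(-)) \cong \R_\qc(Q_U \circ j^*(-))$. The resolution of this difficulty is the content of Lemma \ref{agfst}, which trades the problematic restriction for a statement about $\R_\qc u_*$ matching $\R u_*$ on quasi-coherent data; the delicate part is to bookkeep the identifications in \ref{quienesson} carefully enough so that the descent isomorphisms are compatible with $\phi$, rather than merely existing abstractly. Once that compatibility is nailed down, the rest is the routine Mayer--Vietoris/five-lemma induction and poses no real difficulty.
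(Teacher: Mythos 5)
Your overall architecture (affine base case via Lemma \ref{agaf}, Mayer--Vietoris induction on the size of an affine cover, the five lemma, and Lemma \ref{agfst} as the bridge) matches the paper's, but the mechanism you propose for localizing contains a genuine gap. Your argument pivots on the base-change isomorphism
\[
 j^* \, \R_\qc\bigl(Q_X \circ \shom^\bullet_X(\CG^\bullet, \CK^\bullet)\bigr) \,\iso\,
 \R_\qc\bigl(Q_U \circ \shom^\bullet_U(j^*\CG^\bullet, j^*\CK^\bullet)\bigr),
\]
which you justify by claiming that $\shom^\bullet_X(\CG^\bullet,\CI^\bullet)$ has quasi-coherent homology by \cite[Theorem 2.4.1]{tt}. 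That theorem requires the first argument to be a perfect (or at least pseudo-coherent) complex; here $\CG^\bullet$ is an \emph{arbitrary} object of $\D(\A_\qc(X))$, and the claim fails already for $\CG^\bullet = \oplus_{n}\CO_X$ on an affine scheme, where $\shom_X(\CG^\bullet,\CF)$ is an infinite product of quasi-coherent sheaves and need not be quasi-coherent. Without quasi-coherence of the homology you are back in the situation warned about after \ref{qctor}: $j^*\R{}Q_X \to \R{}Q_U j^*$ is not an isomorphism, so neither your base-change map nor the identification of $j^*\dhom^\bullet_X(\CG^\bullet,\CK^\bullet)$ with $\dhom^\bullet_U(j^*\CG^\bullet,j^*\CK^\bullet)$ is available, and the descent induction cannot start. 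Checking $\phi$ after restriction to the members of the cover therefore does not reduce to Lemma \ref{agaf}.

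The paper sidesteps this by never restricting the source of $\phi$. It proves the stronger statement that $\phi_{\CG^\bullet}$ is an isomorphism on every object of the form $\R_\qc u_*\CK^\bullet$, where $u \colon U \inc X$ is a quasi-compact open immersion, arguing by induction on the minimal number $s(U)$ of affines covering $U$; the theorem is the case $U = X$. The localization is achieved on the \emph{target} side of the hom, via the pushforward compatibility $\mathbf{i}_X \circ \R_\qc u_* \cong \R u_* \circ \mathbf{i}_U$ of Lemma \ref{agfst} together with the adjunction-type isomorphism $\R\shom^\bullet_X(\CG^\bullet, \R u_*\CK^\bullet) \cong \R u_*\R\shom^\bullet_U(u^*\CG^\bullet,\CK^\bullet)$ and the exactness of $u_*$ for $U$ affine (semi-separatedness). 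The Mayer--Vietoris triangle is applied to $\R u_*$ of the cover $U = U_1 \cup V$, producing a triangle of pushed-forward objects in $\D(\A_\qc(X))$ on which $\phi$ can be compared by the five lemma. If you reorganize your induction around objects $\R_\qc u_*\CK^\bullet$ rather than around restrictions of $\phi$, the rest of your outline goes through.
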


\begin{proof}
Again we omit for simplicity the functors $i$ and $\mathbf{i}$.
Let $U$ be a quasi-compact open subset of $X$ and $u \colon U \inc X$ the canonical embedding. We will see that for every $\CG^\bullet \in \D(\A_\qc(X))$ the morphism $\phi_{\CG^\bullet} \circ \R_\qc{}u_{*}$ is an isomorphism. The theorem is the special case in which $X = U$. Let $s(U)$ denote the smallest number of open affines that are needed to cover $U$, we will argue by induction on $s(U)$.

If $s(U) = 1$ then $U$ is an affine open subset of $X$ and being $X$ semi-separated, the functor $u_* \colon \A_\qc(U) \to \A_\qc(X)$ is exact, therefore
\[
\R_\qc(Q_X\shom^\bullet_X(\CG^\bullet, \R_\qc u_* \CK^\bullet))
\cong
\R_\qc(Q_X\shom^\bullet_X(\CG^\bullet, u_* Q_U \CI^\bullet))
\]
where $\CK^\bullet \to \CI^\bullet$ is a K-injective resolution, which implies that $\CK^\bullet \to Q_U \CI^\bullet$ is quasi-coherent K-injective resolution in $\K(\A_\qc(X))$. In this case it holds that $\phi_{\CG^\bullet} \circ \R_\qc{}u_{*} = \phi_{\CG^\bullet} \circ u_{*}$ and agrees with the composition of the following chain of isomorphisms:
\begin{align*}
\R_\qc(Q_X&\shom^\bullet_X(\CG^\bullet, u_* Q_U \CI^\bullet)) \cong \\
    & \cong Q_Xu_*\shom^\bullet_U(u^*\CG^\bullet, Q_U \CI^\bullet) 
               \tag{$u_* Q_U \CI^\bullet$ is K-inj. in $\K(\A_\qc(U))$}\\
    & \cong u_* \R_\qc(Q_U\shom^\bullet_U(u^*\CG^\bullet, \CK^\bullet)) \\
    & \cong \R{}u_* \R{}Q_U\R\shom^\bullet_U(u^* \CG^\bullet, \CK^\bullet) 
               \tag{by Lemma \ref{agaf} for $u$}\\
    & \cong \R{}Q_X\R{}u_*\R\shom^\bullet_U(u^*\CG^\bullet, \CK^\bullet) \\
    & \cong \R{}Q_X\R\shom^\bullet_X(\CG^\bullet, \R{}u_* \CK^\bullet) \\
    & \cong \R{}Q_X\R\shom^\bullet_X(\CG^\bullet, u_* \CK^\bullet)
               \tag{by the previous Lemma}.
\end{align*}

Take now $U$ such that $s(U) = n > 1$. Take a finite covering $U = \bigcup_{i=1}^n U_i$ where every $U_i$ is an affine open subset of $X$. Denote by $u_i \colon U_i \inc X$ and by $u'_i \colon U_i \inc U$ the canonical embeddings. Let $V := \bigcup_{i=2}^{n} U_i$ and $W := U_1 \cap V$. It is clear that $s(V) = n-1$. Also, observe that $W = \bigcup_{i=2}^{n} (U_i \cap U_1)$ and the open subsets $U_i \cap U_1$ are affine for every $i \in \{2, \dots, n\}$ by semi-separation (Proposition \ref{sskey}), therefore $s(W) \leq n-1$. Let $v \colon V \inc X$, $w \colon W \inc X$, $v' \colon V \inc U$ and $w' \colon W \inc U$ denote the canonical open embeddings. As $V$ and $W$ are quasi-compact the maps $v$, $w$ and $u_1$ are quasi-compact and quasi-separated. Consider the distiguished triangle obtained applying the functor $\R{}u_{*}$ to the the Mayer-Vietoris triangle in $\D(U) $ associated to $\CK^\bullet \in \D(\A_\qc(U))$ for the open cover $U = V \cup W$.
\[
\CK^\bullet \lto 
\R{}u'_{1 \, *}{u'_1}^*\CK^\bullet \oplus \R{}v'_{*}{v'}^*\CK^\bullet \lto 
\R{}w'_{*}{w'}^*\CK^\bullet \overset{+}{\lto}
\]
(\cfr \cite[Proof of Lemma (4.7.5.1), p. 188]{LDC} and \cite[Proof of Corollary (1.3.1)]{AJL1}). By Lemma \ref{agfst}, it yields a triangle in $\D(\A_\qc(X))$ (ommiting $\mathbf{i}_X, \mathbf{i}_U \dots$ from the notation)
\[
\R_\qc{}u_{*}\CK^\bullet \lto 
\R_\qc{}u_{1 \, *}{u'_1}^*\CK^\bullet \oplus \R_\qc{}v_{*}{v'}^*\CK^\bullet \lto 
\R_\qc{}w_{*}{w'}^*\CK^\bullet \overset{+}{\lto}
\]
Let $\CK_1^\bullet := \R_\qc{}u_{1 \, *}{u'_1}^*\CK^\bullet$, $\CK_2^\bullet := \R_\qc{}v_{*}{v'}^*\CK^\bullet$ and $\CK_{1\,2}^\bullet := \R_\qc{}w_{*}{w'}^*\CK^\bullet$, then the previous triangle becomes
\[
\R_\qc{}u_{*}\CK^\bullet \lto 
\CK_1^\bullet \oplus \CK_2^\bullet \lto 
\CK_{1\,2}^\bullet \overset{+}{\lto}.
\]
Applying the natural transformation $\phi_{X,\CG^\bullet}$ we obtain the following commutative diagram (that we display transposed)
\begin{diagram}[height=1.8em,w=4em,labelstyle=\scriptstyle]
\R_\qc(Q_X \shom^\bullet_X(\CG^\bullet, \R_\qc{}u_{*}\CK^\bullet)) & 
\rTo^{\phi} &
\dhom^\bullet_X(\CG^\bullet, \R_\qc{}u_{*}\CK^\bullet)\\
\dTo & &\dTo \\
\R_\qc(Q_X  \shom^\bullet_X(\CG^\bullet, \CK_1^\bullet \oplus \CK_2^\bullet)) & \rTo^{\phi_1 \oplus \phi_2} &
\dhom^\bullet_X(\CG^\bullet, \CK_1^\bullet \oplus \CK_2^\bullet)\\
\dTo & &\dTo \\
\R_\qc(Q_X \shom^\bullet_X(\CG^\bullet, \CK_{1\,2}^\bullet))  & 
\rTo^{\phi_{1\,2}} &
\dhom^\bullet_X(\CG^\bullet, \CK_{1\,2}^\bullet)\\
\dTo_+ & &\dTo_+ \\
&&
\end{diagram}
Note that $\phi_2$ and $\phi_{1\,2}$ are isomorphisms in $\D(\A_\qc(X))$ by the induction hypothesis, while $\phi_1$ is an isomorphism because $s(U_1) =1$. We conclude that $\phi$ is an isomorphism as wanted.
\end{proof}

\section{The case of formal schemes}

In this last section, we will prove how to transport the previous results to the context of formal schemes and quasi-coherent torsion sheaves as defined in \cite{dfs}. Let us recall briefly some basic facts and notations that will be used throughout this section.

Let $(\FX, \CO_\FX)$ be a noetherian formal scheme with an ideal of definition  $\CI$. Denote by $\CA(\FX)$ the category of all $\CO_\FX$-modules. Consider the functor $\varGamma'_\FX \colon \CA(\FX) \to \CA(\FX)$ defined by
\[ 
\varGamma'_\FX \CF := \dirlim{n > 0} \shom_{\CO_\FX}(\CO_\FX/\CI^n, \CF)
\] 
for $\CF \in \CA(\FX)$. This functor does not depend on $\CI$ but only on the topology of the sheaf of rings $\CO_{\FX}$.  Let $\CA_\ts(\FX)$ be the full subcategory of $\CA(\FX)$ consisting of sheaves $\CF$ such that
$\varGamma'_\FX \CF = \CF$; it is a \emph{plump} subcategory of $\CA(\FX)$.
This means it is closed for kernels, cokernels and extensions (\emph{cfr.}
\cite[beginning of \S 1]{dfs}). We will consider the subcategory 
$\CA_\qct(\FX) : = \CA_\ts(\FX) \cap \CA_\qc(\FX)$. It is again a plump
subcategory of $\CA(\FX)$ by \cite[Corollary 5.1.3]{dfs} and it defines a
triangulated subcategory of the derived category $\D(\FX) := \D(\CA(\FX))$, namely $\D_\qct(\FX)$, the full subcategory of
$\D(\FX)$ formed by complexes whose homology lies in $\CA_\qct(\FX)$.
The inclusion functor $\CA_\qct(\FX) \to \CA(\FX)$ has a right adjoint
denoted $Q^\ts_\FX$ (see \cite[Corollary 5.1.5]{dfs}). By taking
of K-injective resolutions we have a $\Delta$-functor 
$\R Q^\ts_\FX : \D(\FX) \to \D(\CA_\qct(\FX))$.

\begin{lem}
Let $\FX$ be a locally noetherian formal scheme, the category $\A_\qct(\FX)$ is a Grothendieck category.
\end{lem}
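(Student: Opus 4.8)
The plan is to mimic the proof of Lemma \ref{grcat}, replacing every use of ``quasi-coherent'' by ``quasi-coherent torsion'' and invoking the structure theory of \cite{dfs} in place of the EGA references. First I would recall that $\CA_\qct(\FX)$ is abelian: it is a plump subcategory of $\CA(\FX)$ (being $\CA_\ts(\FX) \cap \CA_\qc(\FX)$, with plumpness noted already in the paragraph preceding the statement, via \cite[Corollary 5.1.3]{dfs}), and a plump subcategory of an abelian category is abelian with exact inclusion. Hence kernels and cokernels are computed in $\CA(\FX)$.

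Next I would verify AB5. Filtered direct limits are exact in $\CA(\FX)$ (the category of all $\CO_\FX$-modules is a Grothendieck category, being module sheaves on a ringed space, so AB5 holds there by \cite[Proposition 3.1.1]{toh} applied to the ringed space $(\FX,\CO_\FX)$). It then suffices to check that $\CA_\qct(\FX)$ is closed under filtered direct limits in $\CA(\FX)$; once that is known, since the inclusion is exact, filtered colimits computed in $\CA_\qct(\FX)$ agree with those in $\CA(\FX)$ and inherit exactness. Closure under direct limits splits into two parts: closure of $\CA_\qc(\FX)$ under direct limits, and closure of $\CA_\ts(\FX)$ under direct limits. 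For the torsion part, the functor $\varGamma'_\FX$ is itself defined as a filtered direct limit $\dirlim{n>0} \shom_{\CO_\FX}(\CO_\FX/\CI^n,-)$, and on a locally noetherian formal scheme each $\CO_\FX/\CI^n$ is, locally, a coherent (indeed noetherian) ring quotient, so $\shom_{\CO_\FX}(\CO_\FX/\CI^n,-)$ commutes with filtered direct limits; combining with the fact that filtered colimits commute with each other, one gets $\varGamma'_\FX(\dirlim{} \CF_j) \cong \dirlim{} \varGamma'_\FX\CF_j$, whence a filtered direct limit of objects fixed by $\varGamma'_\FX$ is again fixed by $\varGamma'_\FX$. (Alternatively one may cite \cite[\S 1]{dfs} or \cite[Corollary 5.1.3]{dfs} directly for the stability of $\CA_\qct(\FX)$ under direct limits.) The quasi-coherence part is the local statement that a direct limit of quasi-coherent $\CO_\FX$-modules is quasi-coherent on a locally noetherian formal scheme, which is standard.

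Finally I would produce a generator. The clean route is to cite \cite[Corollary 5.1.4]{dfs} (or the relevant statement in \cite{dfs}), where it is shown that $\CA_\qct(\FX)$ has enough injectives and, more to the point, admits a generator; alternatively, since $\CA_\qc(\FX)$ has a generator $\CG$ (by the Enochs--Estrada result \cite[Corollary 3.5]{ee} used in Lemma \ref{grcat}, which applies to the underlying locally noetherian scheme, or directly to formal schemes) and $\varGamma'_\FX$ is a right-exact-preserving... more precisely, since $\varGamma'_\FX \colon \CA_\qc(\FX) \to \CA_\qct(\FX)$ is right adjoint to the inclusion and the inclusion is exact, $\varGamma'_\FX$ preserves... one checks that $\varGamma'_\FX\CG$ generates $\CA_\qct(\FX)$: for $\CF \in \CA_\qct(\FX)$ one has $\Hom_{\CA_\qct(\FX)}(\varGamma'_\FX\CG,\CF) \cong \Hom_{\CA_\qc(\FX)}(\CG,\CF)$ by... wait, this adjunction goes the wrong way; instead one uses that $\CF = \varGamma'_\FX\CF$ is a quotient of a direct sum of copies of $\CG$ in $\CA_\qc(\FX)$, and applying the (right-exact, as a filtered colimit) functor $\varGamma'_\FX$ realizes $\CF$ as a quotient of a direct sum of copies of $\varGamma'_\FX\CG$ in $\CA_\qct(\FX)$. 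Either way a generator exists.

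The main obstacle, and the step needing the most care, is the generator: the cleanest argument applying $\varGamma'_\FX$ to a presentation requires knowing $\varGamma'_\FX$ is right exact on quasi-coherent sheaves (true on a locally noetherian formal scheme, since it is a filtered colimit of the left-exact functors $\shom(\CO_\FX/\CI^n,-)$ and the transition maps assemble so that the colimit is exact — this is essentially \cite[Corollary 3.1.3 or \S 1]{dfs}), and that $\varGamma'_\FX\CG$ lands in $\CA_\qct(\FX)$ (clear, as $\varGamma'_\FX$ is idempotent on its image). If one prefers to avoid re-deriving these facts, the safe option is to invoke \cite{dfs} directly for the existence of a generator of $\CA_\qct(\FX)$ and simply record here that AB5 and abelianness follow as above; I would write the proof that way to keep it short, pointing to \cite[Corollary 5.1.3]{dfs} and \cite[Proposition 3.1.1]{toh}.
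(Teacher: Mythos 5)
Your proof, in the form you say you would finally write it, is essentially the paper's proof: abelianness via plumpness from \cite[Corollary 5.1.3]{dfs}, AB5 because direct limits in $\A_\qct(\FX)$ are computed in $\A(\FX)$ where they are exact, and a generator by direct citation to \cite{dfs} (the paper invokes \cite[Lemma 5.1.4]{dfs}, the generators being representatives of isomorphism classes of coherent torsion sheaves).

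One warning about the alternative generator argument you sketch: it is not correct as stated. The functor $\varGamma'_\FX = \dirlim{n}\shom_{\CO_\FX}(\CO_\FX/\CI^n,-)$ is \emph{left} exact (a filtered colimit of left exact functors, with filtered colimits exact), but it is \emph{not} right exact --- its higher right derived functors are local cohomology, which do not vanish in general; already for $\ZZ \twoheadrightarrow \ZZ/p$ the map $\varGamma_{(p)}(\ZZ) \to \varGamma_{(p)}(\ZZ/p)$ fails to be surjective. So applying $\varGamma'_\FX$ to a presentation $\oplus\,\CG \twoheadrightarrow \CF$ does not exhibit $\CF$ as a quotient of copies of $\varGamma'_\FX\CG$, and (as you yourself noticed) the adjunction $i \dashv \varGamma'_\FX$ points the wrong way to make $\varGamma'_\FX\CG$ a generator formally. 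Stick with the direct citation for the generator.
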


\begin{proof}
The fact that it is abelian and possesses direct limits follows from \cite[Corollary 5.1.3]{dfs}. Indeed, the category $\A_\qct(\FX)$ is a plump subcategory of $\A(\FX)$ and a plump subcategory of an abelian category is abelian, being stable for kernels, cokernels and extensions (\cfr \cite[(1.9.1)]{LDC}). Direct limits in $\A_\qct(\FX)$ are exact because they are just direct limits in $\A(\FX)$. Finally, the existence of a generating set follows from \cite[Lemma 5.1.4]{dfs}, a set of generators is given by a set of representatives of isomophism classes of \emph{coherent} torsion sheaves.
\end{proof}

\begin{thm} \label{fs-asht145}
Let $\FX$ be a locally noetherian formal scheme. For the category $\D(\A_\qct(\FX))$ the properties (\ref{tri}),  (\ref{cop}) and (\ref{brw}) always hold.
\end{thm}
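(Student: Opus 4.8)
The plan is to mirror, essentially verbatim, the proof of Theorem \ref{asht145}, since the three axioms (\ref{tri}), (\ref{cop}) and (\ref{brw}) are exactly the ones that hold for the derived category of \emph{any} Grothendieck category, and we have just shown in the preceding Lemma that $\A_\qct(\FX)$ is such a category. So the structure is: first dispose of (\ref{tri}), then (\ref{cop}), then (\ref{brw}).

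For (\ref{tri}) there is nothing to do: $\D(\A_\qct(\FX))$ is a derived category, hence triangulated by construction (\cfr \cite[Example (1.4.4)]{LDC}). For (\ref{cop}), I would argue that since $\A_\qct(\FX)$ is a Grothendieck category it has arbitrary coproducts, and by AB5 (exactness of filtered direct limits, in particular of coproducts) these coproducts are exact; therefore the coproduct of a family of complexes computes the coproduct in $\D(\A_\qct(\FX))$, exactly as in the proof of Theorem \ref{asht145}. One should perhaps remark, as the authors do elsewhere, that here the role played before by \cite[Corollaire (2.2.2)]{ega1} is now played by the plumpness of $\A_\qct(\FX)$ inside $\A(\FX)$ together with \cite[Corollary 5.1.3]{dfs}, which guarantees that coproducts taken in $\A(\FX)$ of torsion quasi-coherent sheaves remain in $\A_\qct(\FX)$. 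For (\ref{brw}), the representability of cohomological functors $\D(\A_\qct(\FX))^{\op}\to\ab$ follows directly from \cite[Theorem 5.8]{AJS}, applicable precisely because $\A_\qct(\FX)$ is a Grothendieck category (by the previous Lemma). Implicitly one also uses \cite[Corollary 5.3]{AJS} to know $\D(\A_\qct(\FX))$ has small hom-sets so that these statements make sense.

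I do not expect any genuine obstacle here: the whole content is the observation that $\A_\qct(\FX)$ is Grothendieck, which has already been secured, and everything else is an appeal to the general machinery of \cite{AJS} and \cite{LDC}. If there is a delicate point at all, it is the compatibility of coproducts between $\A_\qct(\FX)$ and the ambient $\A(\FX)$ — i.e. checking that the inclusion preserves coproducts, so that a termwise coproduct of complexes of torsion quasi-coherent sheaves really is the categorical coproduct in $\D(\A_\qct(\FX))$ — and this is exactly what the plumpness results of \cite[\S5]{dfs} provide. Concretely:

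\begin{proof}
The proof follows that of Theorem \ref{asht145} \emph{mutatis mutandis}, using that $\A_\qct(\FX)$ is a Grothendieck category by the previous Lemma.

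Property (\emph{\ref{tri}}) is immediate, as $\D(\A_\qct(\FX))$ is a derived category, hence triangulated by construction (\cfr \cite[Example (1.4.4)]{LDC}).

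For (\emph{\ref{cop}}), recall that $\A_\qct(\FX)$, being a Grothendieck category, possesses arbitrary coproducts, and by AB5 these are exact; moreover the inclusion $\A_\qct(\FX) \to \A(\FX)$ preserves coproducts, since $\A_\qct(\FX)$ is a plump subcategory and a coproduct in $\A(\FX)$ of torsion quasi-coherent sheaves is again torsion and quasi-coherent by \cite[Corollary 5.1.3]{dfs}. Therefore the termwise coproduct of a family of complexes represents their coproduct in $\D(\A_\qct(\FX))$.

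Finally, (\emph{\ref{brw}}) holds because a cohomological functor from the derived category of a Grothendieck category to $\ab$ is representable by \cite[Theorem 5.8]{AJS}; that $\D(\A_\qct(\FX))$ has small hom-sets, so that the statement is meaningful, is \cite[Corollary 5.3]{AJS}.
\end{proof}
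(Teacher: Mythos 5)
Your proposal is correct and follows essentially the same route as the paper's own proof: (\emph{i}) is immediate, (\emph{iv}) is handled by observing that coproducts in $\A(\FX)$ of objects of $\A_\qct(\FX)$ stay in $\A_\qct(\FX)$ by \cite[Corollary 5.1.3]{dfs} and are exact, and (\emph{v}) is \cite[Theorem 5.8]{AJS} applied to the Grothendieck category $\A_\qct(\FX)$. Your extra remarks about small hom-sets and the inclusion preserving coproducts are harmless elaborations of what the paper leaves implicit.
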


\begin{proof}
This is similar to the proof of Theorem \ref{asht145}.
Once again, (\emph{\ref{tri}}) is trivial because a derived category is triangulated.
The existence of coproducts (\emph{\ref{cop}}) is due to the fact that a coproduct in the category $\A(\FX)$ of sheaves belonging to $\A_\qct(\FX)$, remains quasi-coherent torsion by \cite[Corollary (5.1.3)]{dfs}. But coproducts of sheaves are exact as recalled before.
Finally, (\emph{\ref{brw}}) is satisfied using again \cite[Theorem 5.8]{AJS}.
\end{proof}

\begin{cosa}
Let $f\colon\FX \to \FY$ be a map of locally noetherian formal schemes and let $\CJ \subset \CO_{\FX}$ and $\CK \subset \CO_{\FY}$ be ideals of definition such that $f^{*}(\CK)\CO_{\FX} \subset \CJ$. If $f_{n}\colon X_{n} \to Y_{n}$ with $X_{n}:=(\FX,\CO_{\FX}/\CJ^{n+1})$ and $Y_{n}:=(\FY,\CO_{\FY}/\CK^{n+1})$ is the morphism induced by $f$, for each $n \in \NN$, then $f$ can be expressed as \cite[\S10.6]{ega1}\footnote{\cite[\S 10.6]{ega160}} 
\[f = \dirlim {n\in \NN} f_{n}.\]

We say that a locally noetherian formal scheme $\FX$ is \emph{semi-separated} if the diagonal map $\Delta_f \colon \FX \to \FX \times_{\spec{\ZZ}} \FX$ is an affine morphism where $f \colon \FX \to \spec{\ZZ}$ is the canonical map.
By \cite[Proposition (10.16.2)]{ega1}\footnote{Unfortunately, there is no reference for this in \cite{ega160}} the morphism $f$ is affine if, and only if, $f_0$ is. As a consequence, we can transport the results of \S2 to the context of locally noetherian formal schemes. In particular, the following is true:
\end{cosa}

\begin{prop}\label{fs-sskey}
Let $\FX$ be a locally noetherian formal scheme and $\{\FU_{\alpha}\}_{\alpha \in L}$ be an affine open covering of $\FX$. The formal scheme $\FX$ is semi-separated if, and only if, for any pair of indices $\alpha, \beta \in L$ the open subset $\FU_{\alpha} \cap \FU_{\beta}$ is affine.
\end{prop}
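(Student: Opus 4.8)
The plan is to reduce the statement to Proposition \ref{sskey} for ordinary schemes, by passing to the level-$0$ scheme $X_0 := (\FX,\CO_\FX/\CI)$ attached to an ideal of definition $\CI$ of $\FX$. The two facts that make this work are: a locally noetherian formal scheme and the scheme $X_0$ share the same underlying topological space; and, by \cite[Proposition (10.16.2)]{ega1}, a morphism of locally noetherian formal schemes is affine if and only if the morphism it induces on level-$0$ schemes is affine. Applying the latter to structure maps to $\spec\ZZ$ shows, in particular, that an open $\FU\subset\FX$ is an affine formal scheme exactly when the corresponding open $U_0\subset X_0$ is an affine scheme.

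First I would check that truncation modulo an ideal of definition commutes with the fibre product $\FX\times_{\spec\ZZ}\FX$; locally this is the identification $(A\,\widehat{\otimes}_{\ZZ}B)/(\text{ideal of definition})\cong(A/I)\otimes_{\ZZ}(B/J)$, which gives $(\FX\times_{\spec\ZZ}\FX)_0\cong X_0\times_{\spec\ZZ}X_0$ and, under this isomorphism, $(\Delta_f)_0=\Delta_{f_0}$ for the structure maps $f\colon\FX\to\spec\ZZ$ and $f_0\colon X_0\to\spec\ZZ$. Combined with \cite[Proposition (10.16.2)]{ega1}, this gives the chain of equivalences: $\FX$ semi-separated $\Leftrightarrow$ $\Delta_f$ affine $\Leftrightarrow$ $\Delta_{f_0}$ affine $\Leftrightarrow$ $X_0$ a semi-separated scheme.

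It then suffices to invoke Proposition \ref{sskey} for $X_0$ equipped with the affine open covering $\{U_{\alpha,0}\}_{\alpha\in L}$: it asserts that $X_0$ is semi-separated if and only if $U_{\alpha,0}\cap U_{\beta,0}$ is affine for all $\alpha,\beta\in L$. Since $U_{\alpha,0}\cap U_{\beta,0}$ is the level-$0$ scheme of $\FU_\alpha\cap\FU_\beta$, the open $\FU_\alpha\cap\FU_\beta$ is affine precisely when $U_{\alpha,0}\cap U_{\beta,0}$ is, and splicing this into the previous equivalences finishes the proof.

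The step needing the most care is the compatibility of truncation with the fibre product, i.e. $(\FX\times_{\spec\ZZ}\FX)_0\cong X_0\times_{\spec\ZZ}X_0$ together with $(\Delta_f)_0=\Delta_{f_0}$; once this is in hand, the rest is a transcription of \S2 and of Proposition \ref{sskey}. Alternatively, one can sidestep it and mimic the proof of \ref{sskey} directly: the affine opens $\FU_\alpha\times_{\spec\ZZ}\FU_\beta$ form an open covering of $\FX\times_{\spec\ZZ}\FX$, one has $\Delta_f^{-1}(\FU_\alpha\times_{\spec\ZZ}\FU_\beta)=\FU_\alpha\cap\FU_\beta$, and affineness of a morphism of formal schemes is local on the target and stable under taking preimages of affine opens --- these \emph{sorites} being the transport of \S2 furnished by \cite[Proposition (10.16.2)]{ega1}.
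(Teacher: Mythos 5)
Your proposal is correct and follows essentially the same route as the paper: the paper's proof is a one-liner that invokes the preceding remark (affineness of a morphism of locally noetherian formal schemes is detected on the level-$0$ morphism, by \cite[Proposition (10.16.2)]{ega1}) together with Proposition \ref{sskey}, which is precisely the reduction to $X_0$ that you carry out in detail. Your filling in of the compatibility $(\FX\times_{\spec\ZZ}\FX)_0\cong X_0\times_{\spec\ZZ}X_0$ and $(\Delta_f)_0=\Delta_{f_0}$ is the content the paper leaves implicit in the phrase ``we can transport the results of \S 2.''
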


\begin{proof}
Follows immediately from the previous remark and Proposition \ref{sskey}. \noqed
\end{proof}

\begin{cosa} \label{fs-equiv}
We will now recall briefly for noetherian formal schemes the notions given in Section \ref{closed} for ordinary schemes. For every $\CF^\bullet \in \D(\FX)$ there exists a K-flat resolution $\CP_\CF^\bullet \iso \CF^\bullet$ \cite[Proposition 5.6]{S}. As a consequence there exist a derived functor
\[
\CF \otimes^{\LL}_{\CO_\FX}\!\!\!- \colon 
\D(\FX) \times \D(\FX) \to \D(\FX)
\]
defined by 
$\CF^\bullet \otimes^{\LL}_{\CO_\FX} \CG^\bullet
\cong \CP_\CF^\bullet \otimes^{\LL}_{\CO_\FX} \CG^\bullet$. Given $\CF^\bullet, \CG^\bullet \in \D_\qct(\FX)$ the complex $\CF^\bullet \otimes^{\LL}_{\CO_\FX} \CG^\bullet$ has quasi-coherent torsion homologies. Indeed, by \cite[Proposition 5.2.1 (a)]{dfs}, this is a local question and we can assume that $\FX = \spf(A)$, where $A$ is a noetherian $I$-adic ring. Let $X = \spec(A)$ and $\kappa \colon \FX \to X$ be the completion morphism. Let $Z = V(I)$ be the closed subscheme of $X$ associated to the ideal of definition. The exact functors $\kappa^*$ and $\kappa_*$ restrict to inverse isomorphisms between the categories  $\D_\qct(\FX)$ and $\D_{\qc Z}(X) \subset \D_\qc(X)$ of those complexes whose homologies are supported in $Z$ \cite[Proposition 5.2.4]{dfs}. Thus for $\CF^\bullet, \CG^\bullet \in \D_\qct(\FX)$ we have
\[
\CF^\bullet \otimes^{\LL}_{\CO_\FX} \CG^\bullet \cong
\kappa^*\kappa_*\CF^\bullet \otimes^{\LL}_{\CO_\FX} \kappa^*\kappa_*\CG^\bullet \cong
\kappa^*(\kappa_*\CF^\bullet \otimes^{\LL}_{\CO_X} \kappa_*\CG^\bullet)
\]
and the result follows from the fact that 
$\kappa_*\CF^\bullet \otimes^{\LL}_{\CO_\FX} \kappa_*\CG^\bullet \in \D_{\qc Z}(X)$
by  \cite[Corollary 3.1.2]{AJL1} and \cite[(2.5.8)]{LDC}. This allows us to define a bifunctor
\[
-\otimes^{\LL}_{\CO_X}\!\!\!- \colon 
\D_\qct(\FX) \times \D_\qct(\FX) \to \D_\qct(\FX)
\]
together with the usual associativity coherence inherited from the bifunctor $-\otimes^{\LL}_{\CO_X}\!\!\!-$ in $\D(\FX)$.

Moreover, if $\FX$ is semi-separated (or of finite Krull dimension) the functor  $\R{}Q_\FX^\ts$ provides an equivalence of categories  between $\D_\qct(\FX)$ and $\D(\CA_\qct(\FX))$. To see it, it is enough to realize that in the proof of \cite[Proposition 5.3.1]{dfs}, separated case, it is only used the fact that the intersection of two open affine subsets is again affine (the semi-separation of $\FX$).
\end{cosa}

\begin{cosa}
The category $\D_\qc(\FX)$ is not well-behaved unless $\FX$ is an ordinary scheme. It is not guaranteed that  $\CA_\qc(\FX)$ ---and therefore $\D_\qc(\FX)$--- has all coproducts, see \cite[\S 3]{dfs}. On the contrary the subcategory 
$\D_\qct(\FX) \subset \D_\qc(\FX)$ has good properties, see \lc \S 3 and \S 5, and constitutes a suitable choice of cohomological coefficients for formal schemes. 

The only drawback of this choice is that $\D_\qct(\FX)$ does not contain the category $\D_\cc(\FX)$ ---the derived category of complexes with coherent cohomology. This can be repaired as follows. Denote by $\BG := \R\varGamma'_\FX$ the right-derived functor of $\varGamma'_\FX$ and by $\BL$ the functor
\[
\BL := \R\shom^\bullet_{\CO_\FX}(\R\varGamma'_\FX \CO_\FX,-) \colon
\D_\qc(\FX) \to \D(\FX).
\]
Consider the category $\widehat{\D}(\FX)$ defined as the essential image of the functor $\BL$ and $\widehat{\D}_\qc(\FX)$ the subcategory of $\widehat{\D}(\FX)$ which corresponds to the essential image of the functor $\BL$ restricted to $\D_\qc(\FX)$. The functors $\BL$ and $\BG$ restrict to quasi-inverse equivalences 
between the categories $\D_\qct(\FX)$ and $\widehat{\D}_\qc(\FX)$ \cite[Remarks 6.3.1 (1), (3)]{dfs}.

By the previous discussion, the results that we proved on $\D_\qct(\FX)$ could be transported by the equivalence to $\widehat{\D}_\qc(\FX)$. From now on, we will concentrate on $\D_\qct(\FX)$. We warn the reader that these equivalences are not compatible with their canonical inclusions in $\D(\FX)$. Observe that $\D_\cc(\FX)$ is now a subcategory of $\widehat{\D}_\qc(\FX)$ because for all $\CF^\bullet \in \D_\cc(\FX)$ the canonical map $\CF^\bullet \to \BL\CF^\bullet$ is an isomorphism as follows from applying \cite[Proposition 6.2.1]{dfs} to the case $\CE = \CO_\FX$. 
\end{cosa}

\begin{rem}
Let $\A_{\vec{\cc}}(\FX)$ be the category formed by those sheaves in $\A(\FX)$  which are direct limits of its coherent submodules. The categories $\A_\qct(\FX) \subset \A_{\vec{\cc}}(\FX) \subset \A_\qc(\FX)$ are plump subcategories of $\A(\FX)$. The category $\D_{\vec{\cc}}(\FX) \subset \D_\qc(\FX)$ is the subcategory formed by the subcomplexes whose homologies belong to $\A_{\vec{\cc}}(\FX)$, see \lc beginning of \S 1. We have that
\[\BG(\D_{\vec{\cc}}(\FX)) = \BG(\D_\qc(\FX)) = \D_\qct(\FX),\]
so the relation $\BL = \BL \BG$ in \lc Remark 6.3.1 (1) shows that 
\[\BL(\D_\qct(\FX)) = \BL(\D_{\vec{\cc}}(\FX)) = \BL(\D_\qc(\FX)) = \widehat{\D}_\qc(\FX).\] In \cite[Theorem 0.1]{fgm} it is proved that $\BL|_{\D_{\vec{\cc}}(\FX)}$ is isomorphic to the left derived functor of the completion functor. That is why we interpret $\widehat{\D}_\qc(\FX)$ as a completion of $\D_\qc(\FX)$ for the canonical topology of $\CO_\FX$. Note that $\BL\BL = \BL$ \cite[(b) in Remark 6.3.1 (1)]{dfs}.
\end{rem}

\begin{cosa}
Let $\FX$ be a semi-separated noetherian formal scheme. In $\D_\qct(\FX)$ there is an internal hom defined (and to our knowledge introduced) in \cite[\S1.2]{AJSB} as:
\[\dhom^\bullet_{\FX}(\CF^\bullet,\CG^\bullet) :=
\R{}Q^\ts_\FX\R\shom^\bullet_{\CO_\FX}(\CF^\bullet,\CG^\bullet).
\]  
for $\CF^\bullet, \CG^\bullet \in \D_\qct(\FX)$. Using the same techniques as in Proposition \ref{inthom37} the reader can check that 
\[
 \Hom_{\D_\qct(\FX)}(\CF^\bullet, \dhom^\bullet_\FX(\CG^\bullet, 
 \CH^\bullet)) \iso
 \Hom_{\D_\qct(\FX)}(\CF^\bullet \otimes^{\LL}_{\CO_\FX} \CG^\bullet, 
 \CH^\bullet).
\]
\ie\!\!\!, for $\CG \in \D_\qct(\FX)$ we have an adjunction $-\otimes^{\LL}_{\CO_\FX}\CG \dashv \dhom^\bullet_\FX(\CG, -)$ in $\D_\qct(\FX)$.
\end{cosa}

\begin{thm} \label{fs-asht2}
Let $\FX$ be a noetherian semi-separated formal scheme. The category $\D_\qct(\FX)$ has a natural structure of symmetric closed category. In other words, property (\ref{scl}) of \ref{axioms} holds.
\end{thm}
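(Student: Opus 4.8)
The plan is to mirror, step by step, the proof of Theorem \ref{asht2}, substituting the formal-scheme versions of each ingredient that have already been assembled in this section. First I would exhibit the symmetric monoidal structure $(\D_\qct(\FX), \otimes^{\LL}_{\CO_\FX}, \R\varGamma'_\FX \CO_\FX)$. The tensor bifunctor on $\D_\qct(\FX)$ and its associativity coherence were constructed in \ref{fs-equiv}; the key point to check is that the unit object is not $\CO_\FX$ but $\R\varGamma'_\FX\CO_\FX = \BG\CO_\FX$, since $\CO_\FX$ itself need not lie in $\D_\qct(\FX)$ (this is precisely where non-unitality will later come from, as advertised in the abstract). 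One verifies $\CF^\bullet \otimes^{\LL}_{\CO_\FX}\BG\CO_\FX \cong \CF^\bullet$ for $\CF^\bullet \in \D_\qct(\FX)$ using that $\BG\CO_\FX \otimes^{\LL}_{\CO_\FX}(-)$ computes $\BG$ on $\D_\qct(\FX)$ and that $\BG$ is the identity there; the commutativity and coherence diagrams then descend from $\D(\FX)$ exactly as in Theorem \ref{asht2}, since $\D_\qct(\FX)$ is a full triangulated subcategory.

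Next I would record the closed structure: the adjunction $-\otimes^{\LL}_{\CO_\FX}\CG \dashv \dhom^\bullet_\FX(\CG,-)$ in $\D_\qct(\FX)$ is exactly the statement already proved (via the techniques of Proposition \ref{inthom37}) in the paragraph immediately preceding this theorem, using $\mathbf{i} \dashv \R Q^\ts_\FX$, the equivalence $\D_\qct(\FX) \simeq \D(\CA_\qct(\FX))$ from \ref{fs-equiv}, and the adjunction $-\otimes^{\LL}_{\CO_\FX}\CG \dashv \R\shom^\bullet_{\CO_\FX}(\CG,-)$ in $\D(\FX)$. Both bifunctors are $\Delta$-functors in each variable, which follows from the fact that $\otimes^{\LL}$ and $\R\shom^\bullet$ are triangulated functors on $\D(\FX)$ and that $\R Q^\ts_\FX$ is a $\Delta$-functor. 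Finally one checks the sign/symmetry compatibility of the unit: the analogue of the commutative square in the proof of Theorem \ref{asht2} with $\CO_X$ replaced by $\BG\CO_\FX$, where the switch map $\theta$ carries the Koszul sign $(-1)^{rs}$ on shifts $\BG\CO_\FX[r]\otimes^{\LL}\BG\CO_\FX[s]$; since $\BG\CO_\FX$ is represented by a K-flat complex this is the same calculation as in \cite[(1.5.4.1)]{LDC}.

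I expect the main obstacle to be bookkeeping around the unit object: one must be careful never to write $\CO_\FX$ where $\R\varGamma'_\FX\CO_\FX$ is meant, and verify that $\R\varGamma'_\FX\CO_\FX$ genuinely acts as a two-sided unit for $\otimes^{\LL}_{\CO_\FX}$ restricted to $\D_\qct(\FX)$. This unit property is really a consequence of $\BG$ being idempotent and a tensor-idempotent (the complex $\R\varGamma'_\FX\CO_\FX$ is an idempotent for the smash product), together with $\D_\qct(\FX) = \BG\D_\qc(\FX)$; I would cite \cite[\S 5]{dfs} (and the relation $\BL=\BL\BG$, $\BG\BL=\BG$ from \cite[Remark 6.3.1]{dfs}) rather than reprove it. Everything else is a verbatim transcription of Theorem \ref{asht2}'s proof through the equivalence of \ref{fs-equiv}, so the proof can be kept short: assemble the monoidal data from \ref{fs-equiv}, invoke the displayed adjunction stated just above, note the $\Delta$-functoriality, and check the sign square for the unit.
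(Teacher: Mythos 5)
Your proposal is correct and follows essentially the same route as the paper's proof: identify $\R\varGamma'_\FX\CO_\FX$ as the unit, verify the unit isomorphism via the fact that tensoring with $\BG\CO_\FX$ implements $\BG$ (the paper does this through the chain $\CF \cong \BG\CF\otimes^{\LL}\CO_\FX \cong \CF\otimes^{\LL}\BG\CO_\FX$ using \cite[Corollary 3.1.2]{AJL1}), let the coherence diagrams descend from $\D(\FX)$ along the full tensor-stable inclusion, invoke the adjunction from the preceding paragraph, and check the sign square. The only difference is cosmetic (which reference you cite for the projection formula for $\BG$).
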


\begin{proof}
The unit object for $-\otimes^{\LL}_{\CO_\FX}\!\!\!-$ is $\R\varGamma'_\FX \CO_\FX$ 
. We remark that there is a canonical isomorphism $\R{}Q^\ts_\FX\R\varGamma'_\FX \CO_\FX \cong \R\varGamma'_\FX \CO_\FX$ in $\D_\qct(\FX)$. We will denote this object by $\CO'_\FX$ for convenience. Now reasoning as in Theorem \ref{asht2}, the data $(\D_\qct(\FX), \otimes^{\LL}_{\CO_\FX}, \CO'_X)$ together with the corresponding compatibility diagrams define a monoidal category because using \cite[Examples (3.5.2) (d)]{LDC} this is the case for $(\D(\FX), \otimes^{\LL}_{\CO_\FX}, \CO_\FX)$. The category $\D_\qct(\FX)$ is a full subcategory of $\D(\FX)$, and it is stable for the tensor product. To see that $\CO'_\FX$ is the unit object, consider the following chain of isomorphisms
\begin{align*}
\CF \cong \CF\otimes^{\LL}_{\CO_\FX}\CO_\FX & 
      \cong \R\varGamma'_\FX\CF\otimes^{\LL}_{\CO_\FX}\CO_\FX 
      \tag{$\CF \in \D_\qct(\FX)$}\\
    & \cong\CF\otimes^{\LL}_{\CO_\FX}\R\varGamma'_\FX\CO_\FX 
    \tag{\cite[Corllary (3.1.2)]{AJL1}}\\
    & = \CF\otimes^{\LL}_{\CO_\FX}\CO'_\FX.
\end{align*}
And analogously $\CF \cong \CO'_\FX\otimes^{\LL}_{\CO_\FX}\CF$. The adjunction $-\otimes^{\LL}_{\CO_\FX}\CG \dashv \dhom^\bullet_\FX(\CG, -)$, holds for any $\CG \in \D_\qct(\FX)$ by the discussion in the previous paragraph. Again, it is clear that both bifunctors are $\Delta$-functors in either variable and that the square
\begin{diagram}[height=2em,w=2em,p=0.3em,labelstyle=\scriptstyle]
\CO'_\FX[r] \otimes^{\LL}_{\CO_\FX} \CO'_\FX[s] & \rTo^{\sim}  
                                                 &  \CO'_\FX[r + s] \\
\dTo_\theta                             &              & \dTo_{(-1)^{rs}} \\ 
\CO'_\FX[s] \otimes^{\LL}_{\CO_\FX} \CO'_\FX[r] & \rTo^{\sim}  
                                        &  \CO'_\FX[r + s] \\
\end{diagram}
commutes by \cite[(1.5.4.1)]{LDC}.
\end{proof}

\begin{cosa}
Let $\FX$ be a formal scheme. A complex $\CE^\bullet \in \CCC(\FX)$ is called \emph{perfect} if for every $x \in \FX$ there is an open neighborhood $\FU$ of $x$ and a bounded complex of locally-free finite type modules $\CF^\bullet$ together with a quasi-isomorphism $\CF^\bullet \to \CE^\bullet|_\FU$ in $\CCC(\FU)$ or, what amounts to the same, an isomorphism $\CF^\bullet \iso \CE^\bullet|_\FU$ in $\D(\FU)$ (\cfr \cite[Corollaire 4.3]{I1}).
\end{cosa}

\begin{prop} \label{seisocho}
Let $\FX = \spf(A)$ be an affine formal scheme such that $A$ is an $I$-adic noetherian ring. The category $\D_\qct(\FX)$ is generated by a compact object.
\end{prop}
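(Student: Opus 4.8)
The plan is to reduce to the case of an affine ordinary scheme and then produce the compact generator as a Koszul complex. Write $X=\spec(A)$, put $Z=\spec(A/I)=V(I)$, and let $\kappa\colon\FX\to X$ be the completion morphism. By \cite[Proposition 5.2.4]{dfs}, already used in \ref{fs-equiv}, the functors $\kappa^{*}$ and $\kappa_{*}$ restrict to mutually quasi-inverse equivalences of triangulated categories between $\D_\qct(\FX)$ and the full subcategory $\D_{\qc Z}(X)\subset\D_\qc(X)$ of complexes whose homology sheaves are supported in $Z$. An equivalence preserves coproducts, compactness and generation, so it suffices to exhibit a single compact generator of $\D_{\qc Z}(X)$. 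Through the standard identification $\D_\qc(X)\iso\D(A)$ this translates into finding a compact generator of the full subcategory $\D_Z(A)\subset\D(A)$ of complexes all of whose cohomology modules are supported in $Z$, that is (as $A$ is noetherian) are $I$-power torsion; note that $\D_Z(A)$ is closed under coproducts in $\D(A)$.

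The candidate is the Koszul complex. Since $A$ is noetherian, fix generators $I=(f_1,\dots,f_r)$ and let $\CK^\bullet=\CK^\bullet(f_1,\dots,f_r;A)$ be the associated Koszul complex, a bounded complex of finite free $A$-modules. Its homology is annihilated by $I$, so $\CK^\bullet\in\D_Z(A)$. Being perfect, $\CK^\bullet$ is compact in $\D(A)\iso\D_\qc(X)$ by \cite[Lemma 4.3]{AJST} (the affine case of Proposition \ref{compsd}); and since $\D_Z(A)$ is closed under coproducts in $\D(A)$, the object $\CK^\bullet$ is compact in $\D_Z(A)$ as well.

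The substantive step is to prove that $\CK^\bullet$ generates $\D_Z(A)$ in the strong sense of \ref{strong}. By \cite[Lemma 3.2]{Ngd} applied to the set $\{\CK^\bullet[n]\mid n\in\ZZ\}$ of compact objects, it is enough to verify weak generation: if $M\in\D_Z(A)$ and $\Hom_{\D(A)}(\CK^\bullet[n],M)=0$ for all $n$, then $M=0$. Using that $\CK^\bullet$ is perfect and self-dual up to the shift $[-r]$, one identifies $\R\Hom_A(\CK^\bullet,M)\cong(\CK^\bullet\otimes^{\LL}_A M)[-r]$, so the hypothesis says exactly that $\CK^\bullet\otimes^{\LL}_A M$ is acyclic. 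I would then induct on $r$, the case $r=0$ being trivial: writing $\CK^\bullet(f_1,\dots,f_r;A)\cong\CK^\bullet(f_1,\dots,f_{r-1};A)\otimes_A(A\xto{f_r}A)$, acyclicity of $\CK^\bullet\otimes^{\LL}_A M$ means precisely that $f_r$ acts bijectively on $L:=\CK^\bullet(f_1,\dots,f_{r-1};A)\otimes^{\LL}_A M$. But each cohomology module of $L$ is a subquotient of a finite direct sum of cohomology modules of $M$, hence is supported in $Z\subseteq V(f_r)$; a module supported in $V(f_r)$ on which $f_r$ acts bijectively equals its own localization at $f_r$, which is zero, so $L=\CK^\bullet(f_1,\dots,f_{r-1};A)\otimes^{\LL}_A M=0$. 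Since the cohomology of $M$ is a fortiori supported in $V(f_1,\dots,f_{r-1})$, the inductive hypothesis gives $M=0$. Transporting $\CK^\bullet$ back along $\kappa^{*}$ --- equivalently, taking the Koszul complex of $\CO_\FX$ on the images of $f_1,\dots,f_r$ --- yields the desired compact generator of $\D_\qct(\FX)$.

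The formal reductions cost nothing once \cite[Proposition 5.2.4]{dfs} is in hand, and compactness of the Koszul complex is classical; the point requiring the most care is the weak-generation argument above, especially keeping track of supports while peeling off Koszul factors, checking that compactness is inherited by the subcategory $\D_Z(A)$, and invoking the (standard but slightly technical) self-duality of the Koszul complex that allows the passage from $\R\Hom_A(\CK^\bullet,-)$ to $\CK^\bullet\otimes^{\LL}_A(-)$.
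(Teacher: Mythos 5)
Your proposal is correct and follows the same route as the paper: reduce via the equivalence $\kappa_*\colon\D_\qct(\FX)\iso\D_I(A)$ from \cite[Proposition 5.2.4]{dfs} and take the Koszul complex on generators of $I$, which is compact because it is perfect. The only difference is that where the paper cites \cite[Proposition 6.1]{BN} (or \cite[Proposition 6.1]{dg}) for the fact that the Koszul complex generates, you supply the standard self-duality-plus-induction argument yourself, and that argument is sound.
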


\begin{proof}
Using \cite[Propositions 5.2.4]{dfs} it follows easily that the category $\D_\qct(\FX)$ is equivalent to $\D_{I}(A)$, the full subcategory of the derived category of $A$-modules such that its homologies are $I$-torsion. But this triangulated category is generated by any Koszul complex $K^\bullet$ associated to a sequence of generators of $I$ by \cite[Proposition 6.1]{BN}, or, for a somehow more detailed proof, \cite[Proposition 6.1]{dg}. Now $K^\bullet$ is a perfect complex of $A$-modules (in fact, it is strictly perfect), by \cite[Lemma 4.3]{AJST} it is also a compact object in $\D(A)$, and therefore in $\D_{I}(A)$. We conclude that $\D_{I}(A)$ is generated by a compact object, or, what amounts to the same so does the category $\D_\qct(\FX)$.
\end{proof}

\begin{lem}\label{fs-mj} 
Let $\FX$ be a noetherian formal scheme and let $\FU$ be an affine open subset of $\FX$ and denote by $j \colon \FU \inc \FX$, the canonical inclusion. If $\CE^\bullet$ is a compact object in $\D_\qct(\FX)$ then its restriction, $j^*\CE^\bullet$, is a compact object in $\D_\qct(\FU)$.
\end{lem}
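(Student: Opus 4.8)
The plan is to follow the proof of Lemma~\ref{mj}, with the exact functor $j_*$ of the scheme case replaced by the derived functor $\R{}j_*$. Restriction $j^*$ carries $\D_\qct(\FX)$ into $\D_\qct(\FU)$, since quasi-coherence and the torsion condition are local on the base (\cite[Proposition~5.2.1]{dfs}); the key claim to establish is that its right adjoint $\R{}j_*$ (defined on the full derived categories of modules) carries $\D_\qct(\FU)$ back into $\D_\qct(\FX)$ and commutes with coproducts. Both statements are local on $\FX$: for an affine open $\FV\subset\FX$ one has, by base change along the open immersion $\FV\inc\FX$, a natural isomorphism $(\R{}j_*\CK^\bullet)|_\FV\cong\R{}(j_\FV)_*(\CK^\bullet|_{\FU\cap\FV})$, where $j_\FV\colon\FU\cap\FV\inc\FV$. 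Now $\FU\cap\FV$ is an open subset of the affine --- hence separated, hence semi-separated --- formal scheme $\FV$, so it has a finite affine covering with affine pairwise intersections, and the \v{C}ech/Mayer-Vietoris presentation of $\R{}(j_\FV)_*$ used in \ref{fs-equiv} exhibits it as a finite iterated mapping cone of push-forwards along morphisms of \emph{affine} formal schemes. Each such push-forward is exact, preserves quasi-coherent torsion sheaves, and manifestly commutes with coproducts; these two properties are stable under finite iterated cones and may be tested locally on $\FX$. Thus $j^*\dashv\R{}j_*$ as a pair of functors $\D_\qct(\FX)\rightleftarrows\D_\qct(\FU)$, and $\R{}j_*$ commutes with coproducts, where one recalls that $\CA_\qct(\FX)$ is closed under coproducts in $\CA(\FX)$ (Lemma~\ref{fs-asht145}), so that coproducts in $\D_\qct(\FX)$ are those of $\D(\FX)$.

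Granting this, the argument of \ref{mj} applies: for a family $\{\CF^\bullet_\lambda\}_{\lambda\in\Lambda}$ in $\D_\qct(\FU)$,
\begin{align*}
\bigoplus_{\lambda\in\Lambda}\Hom_{\D_\qct(\FU)}(j^*\CE^\bullet,\CF^\bullet_\lambda)
  &\cong\bigoplus_{\lambda\in\Lambda}\Hom_{\D_\qct(\FX)}(\CE^\bullet,\R{}j_*\CF^\bullet_\lambda)
      \tag{$j^*\dashv\R{}j_*$}\\
  &\cong\Hom_{\D_\qct(\FX)}\Bigl(\CE^\bullet,\bigoplus_{\lambda\in\Lambda}\R{}j_*\CF^\bullet_\lambda\Bigr)
      \tag{$\CE^\bullet$ compact}\\
  &\cong\Hom_{\D_\qct(\FX)}\Bigl(\CE^\bullet,\R{}j_*\bigoplus_{\lambda\in\Lambda}\CF^\bullet_\lambda\Bigr)
      \tag{$\R{}j_*$ preserves coproducts}\\
  &\cong\Hom_{\D_\qct(\FU)}\Bigl(j^*\CE^\bullet,\bigoplus_{\lambda\in\Lambda}\CF^\bullet_\lambda\Bigr)
      \tag{$j^*\dashv\R{}j_*$}
\end{align*}
whence $j^*\CE^\bullet$ is compact in $\D_\qct(\FU)$.

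The point needing care --- and the only real difference from the scheme case \ref{mj} --- is that $\R{}j_*$ is now genuinely derived. In \ref{mj}, $\FX$ semi-separated and $\FU$ affine force $j$ to be affine, so $j_*$ is exact, visibly preserves quasi-coherence, and trivially satisfies $\bigoplus j_*\CF^\bullet_\lambda\iso j_*\bigoplus\CF^\bullet_\lambda$; here one must instead verify, via the local \v{C}ech reduction above, both that $\R{}j_*$ does not leave $\D_\qct(\FX)$ and that it commutes with coproducts. (If $\FX$ is moreover semi-separated, as it will be in the main results of this section, then $j$ is again affine by the formal-scheme analogue of Proposition~\ref{sskey} and the proof reverts verbatim to that of \ref{mj}.)
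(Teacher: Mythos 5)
Your proof is correct and has the same skeleton as the paper's: both reduce the statement to the two facts that $\R{}j_*$ carries $\D_\qct(\FU)$ into $\D_\qct(\FX)$ and commutes with coproducts, and then run the identical four-step adjunction computation. The difference lies in how those inputs are obtained. The paper simply cites the foundational results of \cite{dfs}: Proposition~5.2.6 for preservation of quasi-coherent torsion homologies by $\R{}j_*$, Proposition~3.5.2 for its commutation with coproducts, and Corollary~5.2.11\,(b) for the restriction of the adjunction $j^*\dashv\R{}j_*$ to the qct subcategories (noting that $j$ is adic, being an open immersion). You instead reprove these facts by localizing to an affine open $\FV\subset\FX$, using flat base change for the open immersion, and presenting $\R{}(j_\FV)_*$ as a finite iterated cone of pushforwards along affine morphisms, exploiting that $\FU\cap\FV$ is a quasi-compact open of the separated affine formal scheme $\FV$. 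This argument is sound --- both properties are indeed local on $\FX$, and your remark that coproducts in $\D_\qct(\FX)$ agree with those in $\D(\FX)$ is the right thing to check --- and it has the virtue of being self-contained and of isolating exactly where the derived nature of $\R{}j_*$ matters; the cost is that the ingredients needed to make it fully precise (exactness of affine pushforwards on quasi-coherent torsion sheaves, open base change for formal schemes) are essentially the same results of \cite{dfs} the paper invokes directly. One small caveat on your closing parenthesis: the semi-separated shortcut is not available in general, since the lemma is applied in Proposition~\ref{fs-compsd} to an arbitrary noetherian formal scheme, so the general case you treat is genuinely the one needed.
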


\begin{proof}
By \cite[Proposition 5.2.6]{dfs} we have that $\R{}j_*$ takes complexes in $\D_\qct(\FU)$ into complexes in $\D_\qct(\FX)$.
Let $\{\CF^{\bullet}_\alpha \, / \, \alpha \in A\}$ be a set of objects in $\D_\qct(\FU)$. By \cite[Proposition 3.5.2]{dfs} we have an isomorphism 
\[\phi \colon \oplus_{\alpha \in A} \R{}j_*\CF^{\bullet}_\alpha \,\iso\, \R{}j_*\oplus_{\alpha \in A}\CF^{\bullet}_\alpha,\] therefore
\begin{align*}
\oplus_{\alpha \in A}\Hom_{\D(\FU)}&(j^*\CE^\bullet, 
\CF^{\bullet}_\alpha)
\\
    & \cong \oplus_{\alpha \in A}\Hom_{\D(\FX)}(\CE^{\bullet}, 
            \R{}j_*\CF^{\bullet}_\alpha)
                           \tag{$j^* \dashv  \R{}j_*$}\\ 
    & \cong \Hom_{\D(\FX)}(\CE^{\bullet}, 
            \oplus_{\alpha \in A}\R{}j_*\CF^{\bullet}_\alpha) 
                          \tag{$\CE^{\bullet}$ compact}\\
    & \cong \Hom_{\D(\FX)}(\CE^{\bullet}, 
            \R{}j_*\oplus_{\alpha \in A}\CF^{\bullet}_\alpha) 
                          \tag{$\phi$ isomorphism}\\
    & \cong \Hom_{\D(\FU)}(j^*\CE^\bullet, 
            \oplus_{\alpha \in A}\CF^{\bullet}_\alpha). 
                                    \tag{$j^* \dashv  \R{}j_*$}
\end{align*}
Note that the map $j$ is adic because it is an open embedding. By \cite[Corollary 5.2.11 (b)]{dfs} the adjunction $j^* \dashv  \R{}j_*$ restricts to the subcategory of objects with quasi-coherent torsion homologies. So we conclude that $j^*\CE^\bullet$ is a compact object.
\end{proof}

\begin{prop} \label{fs-compsd}
Let $\FX$ be a noetherian formal scheme. The compact objects in $\D_\qct(\FX)$ are the perfect complexes.
\end{prop}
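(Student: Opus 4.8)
The plan is to mirror the proof of Proposition~\ref{compsd}, with an affine formal scheme playing the rôle of an affine scheme and $\D_\qct$ that of $\D_\qc$. Since being a perfect complex is a local condition, the assertion amounts to the following: an object $\CE^\bullet$ of $\D_\qct(\FX)$ is compact in $\D_\qct(\FX)$ if and only if it is a perfect complex of $\CO_\FX$-modules. First I would treat the \emph{affine case} $\FX = \spf(A)$, $A$ a noetherian $I$-adic ring. As in the proof of Proposition~\ref{seisocho}, the category $\D_\qct(\FX)$ is equivalent to $\D_I(A)$, the full triangulated subcategory of $\D(A)$ whose objects have $I$-torsion homology; it is stable under the coproducts of $\D(A)$ (a direct limit of torsion modules is torsion) and, by \cite[Proposition 6.1]{BN} (see also \cite[Proposition 6.1]{dg}), it is compactly generated by a Koszul complex $K^\bullet$ on a system of generators of $I$, which is strictly perfect. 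By Neeman's localization theorem the compact objects of $\D_I(A)$ form the thick subcategory it generates, which therefore coincides with the thick subcategory of $\D(A)$ generated by $K^\bullet$; since $K^\bullet$ is perfect and the perfect complexes of $A$-modules form a thick subcategory of $\D(A)$, every compact object of $\D_I(A)$ is a perfect complex of $A$-modules. Conversely a perfect complex of $A$-modules that lies in $\D_I(A)$ is compact in $\D(A)$ by \cite[Lemma 4.3]{AJST}, hence compact in $\D_I(A)$, its coproducts being computed in $\D(A)$. As the equivalence carries perfect complexes to perfect complexes, this settles the affine case.

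For a general noetherian $\FX$ I would then argue both implications as in Proposition~\ref{compsd}. If $\CE^\bullet\in\D_\qct(\FX)$ is compact and $j\colon\FU\inc\FX$ is an affine open subset, then Lemma~\ref{fs-mj} shows that $j^{*}\CE^\bullet$ is compact in $\D_\qct(\FU)$, hence perfect by the affine case; as perfectness is local, $\CE^\bullet$ is perfect. For the converse, let $\CE^\bullet\in\D_\qct(\FX)$ be perfect and $\{\CF^\bullet_\alpha\}_\alpha$ a family of objects of $\D_\qct(\FX)$. As in Proposition~\ref{compsd} one checks first that the canonical morphism
\[
\phi\colon \oplus_\alpha \R\shom^\bullet_\FX(\CE^\bullet,\CF^\bullet_\alpha)\lto
\R\shom^\bullet_\FX(\CE^\bullet, \oplus_\alpha\CF^\bullet_\alpha)
\]
is an isomorphism in $\D(\FX)$: this is local on $\FX$, so one reduces to the case where $\CE^\bullet$ is a bounded complex of free finite-type modules and proceeds by induction on its length using the triangle $\CE^q[-q]\to\CE^\bullet\to{\CE'}^\bullet\overset{+}{\lto}$. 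One also needs $\R\shom^\bullet_\FX(\CE^\bullet,\CF^\bullet_\alpha)\in\D_\qct(\FX)$, which is again local: locally $\CE^\bullet$ is finite free, so $\R\shom^\bullet_\FX(\CE^\bullet,\CF^\bullet_\alpha)\cong(\CE^\bullet)^{\vee}\otimes^{\LL}_{\CO_\FX}\CF^\bullet_\alpha$ lies in $\D_\qct$ because tensoring a torsion sheaf with an arbitrary $\CO_\FX$-module yields a torsion sheaf (\cfr \cite[Corollary 3.1.2]{AJL1} and the discussion in \ref{fs-equiv}); the same holds with $\oplus_\alpha\CF^\bullet_\alpha$ in place of $\CF^\bullet_\alpha$.

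With these two facts in hand I would conclude with the chain of natural isomorphisms
\begin{align*}
\oplus_\alpha \Hom_{\D_\qct(\FX)}(\CE^\bullet,\CF^\bullet_\alpha)
 &\cong \oplus_\alpha \h^0(\R\Gamma(\FX,\R\shom^\bullet_\FX(\CE^\bullet,\CF^\bullet_\alpha)))\\
 &\cong \h^0(\R\Gamma(\FX,\oplus_\alpha\R\shom^\bullet_\FX(\CE^\bullet,\CF^\bullet_\alpha)))\\
 &\cong \h^0(\R\Gamma(\FX,\R\shom^\bullet_\FX(\CE^\bullet,\oplus_\alpha\CF^\bullet_\alpha)))\\
 &\cong \Hom_{\D_\qct(\FX)}(\CE^\bullet,\oplus_\alpha\CF^\bullet_\alpha),
\end{align*}
in which the first and last isomorphisms use that $\D_\qct(\FX)\inc\D(\FX)$ is fully faithful together with $\h^0(\R\Gamma(\FX,\R\shom^\bullet_\FX(-,-)))=\Hom_{\D(\FX)}(-,-)$, the second uses that $\h^0$ and $\R\Gamma(\FX,-)$ both commute with coproducts (the latter applied to the objects $\R\shom^\bullet_\FX(\CE^\bullet,\CF^\bullet_\alpha)$, which lie in $\D_\qct(\FX)$ by the previous step), and the third applies $\R\Gamma(\FX,-)$ and $\h^0$ to $\phi$. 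This exhibits $\CE^\bullet$ as compact. The main obstacle is the input used in the second isomorphism, namely that $\R\Gamma(\FX,-)$ commutes with arbitrary coproducts on $\D_\qct(\FX)$ when $\FX$ is noetherian; I would prove it by the same Mayer--Vietoris induction on the number of affine opens needed to cover $\FX$ as in \cite[Corollary (3.9.3.3)]{LDC}, the base case being immediate because $\varGamma(\FX,-)$ is exact on $\A_\qct(\FX)$ (a category equivalent to one of torsion modules) and the inductive step using that coproducts of distinguished triangles are distinguished. Everything else is a routine transcription of the scheme case.
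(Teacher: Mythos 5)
Your proposal is correct and follows essentially the same route as the paper's proof: the affine case via the equivalence with $\D_I(A)$, the Koszul generator and Neeman's identification of compacts with the thick subcategory it generates, globalization of "compact $\Rightarrow$ perfect" through Lemma~\ref{fs-mj}, and "perfect $\Rightarrow$ compact" via the local isomorphism $\phi$ and the $\h^0\R\Gamma$ chain. The only point you flag as an obstacle---commutation of $\R\Gamma(\FX,-)$ with coproducts on $\D_\qct(\FX)$---is handled in the paper by citing \cite[Proposition 3.5.2]{dfs} rather than by your proposed Mayer--Vietoris induction, but that is a cosmetic difference.
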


\begin{proof}
Let us see that perfect implies compact. Let $\CE^\bullet$ be a perfect complex and let $\{\CF^\bullet_\lambda \,/\, \lambda \in \Lambda\}$ be a family of complexes in $\D_\qct(\FX)$. We will see first that the canonical map 
\[
\phi \colon
\oplus_{\lambda \in \Lambda} \rshom_\FX(\CE^\bullet, \CF^\bullet_\lambda) \lto
\rshom_\FX(\CE^\bullet, \oplus_{\lambda \in \Lambda} \CF^\bullet_\lambda)
\]
is an isomorphism in $\D(\FX)$. This is a local question therefore we may take a point $x \in \FX$ and an open neighborhood $\FV \subset \FX$ of $x$ such that $\CE^\bullet |_\FV$ is a bounded complex of free finite rank modules. Take $\FV$ for $\FX$ and let us check that $\phi$ is an isomorphism. But this is clear. If the complex $\CE^\bullet$ has length one then, it is trivial. If the complex has length $n > 1$, suppose that $q \in \ZZ$ is the first integer such that $\CE^q \neq 0$, that exists because $\CE^\bullet$ is bounded. Then there is a distinguished triangle $\CE^q[-q] \to \CE^\bullet \to {\CE'}^\bullet \overset{+}\to$ with ${\CE'}^\bullet$ of length $n - 1$. The fact holds for $\CE^q[-q]$ and  for ${\CE'}^\bullet$ by induction, therefore it has to hold for $\CE^\bullet$. Arguing as in \cite[Theorem 2.4.1]{tt} we have that 
$\rshom_\FX(\CE^\bullet, \CF^\bullet_\lambda) \in \D_\qct(\FX)$. Now we have the following chain of canonical isomorphisms
\begin{align*}
\oplus_{\lambda \in \Lambda} &\Hom_{\D(\FX)}(\CE^\bullet, \CF^\bullet_\lambda) \cong \\
    & \cong \h^0(\oplus_{\lambda \in \Lambda} \rhom^\bullet_\FX(\CE^\bullet, \CF^\bullet_\lambda)) 
                   \tag{$\h^0$ commutes with $\oplus$}\\
    & \cong \h^0(\oplus_{\lambda \in \Lambda} \R\Gamma(\FX, \rshom^\bullet_\FX(\CE^\bullet, \CF^\bullet_\lambda))) \\
    & \cong \h^0(\R\Gamma(\FX, \oplus_{\lambda \in \Lambda}\rshom^\bullet_\FX(\CE^\bullet, \CF^\bullet_\lambda))) 
                   \tag{by \cite[Proposition 3.5.2]{dfs}}\\
    & \cong \h^0(\R\Gamma(\FX, \rshom^\bullet_\FX(\CE^\bullet, \oplus_{\lambda \in \Lambda}\CF^\bullet_\lambda)))
                   \tag{via $\phi$}\\
    & \cong \Hom_{\D(\FX)}(\CE^\bullet, \oplus_{\lambda \in \Lambda} \CF^\bullet_\lambda). 
\end{align*}
which show that $\CE^\bullet$ is compact in $\D_\qct(\FX)$.

Conversely, let us see that a compact object $\CE^\bullet\in \D_\qct(\FX)$ is a perfect complex. Let us assume first that $\FX$ is affine, \ie $\FX = \spf(A)$ where $A$ is an $I$-adic noetherian ring. Consider the completion morphism $\kappa \colon \spf(A) \to \spec(A)$. Let $K^\bullet$ be a Koszul complex associated to a sequence of generators of $I$. By the proof of Proposition \ref{seisocho} $\CK^\bullet := \kappa^*\widetilde{K^\bullet}$ generates $\D_\qct(\FX)$, and by \ref{strong} the smallest triangulated subcategory stable for coproducts containing $\CK^\bullet$ is all of $\D_\qct(\FX)$. Applying \cite[Lemma 2.2]{Ntty} the thick subcategory\footnote{Thick subcategory = triangulated and stable for direct summands.} of $\D_\qct(\FX)$ formed by its compact objects is the smallest one that contains $\CK^\bullet$ but all its objects are perfect because the subcategory of perfect complexes of $\D_\qct(\FX)$ is thick and contains $\CK^\bullet$ as can be seen adapting the argument in \cite[Proposition 2.2.13]{tt}.

Assume now that $\FX$ is a noetherian formal scheme and that $\CE^\bullet$ is a compact object in $\D_\qct(\FX)$. Let $\FU$ be an affine open subset of $\FX$ and denote by $j \colon \FU \inc \FX$ the canonical inclusion. By Lemma \ref{fs-mj} $j^*\CE^\bullet = \CE^\bullet |_\FU$ is compact and by the previous discussion it is perfect, but being perfect is a local question, therefore $\CE^\bullet$ is a perfect complex.
\end{proof}

\begin{lem}
Let $\FX$ be a noetherian formal scheme and let $\FU$ be a open subset of $\FX$. Let $\CF^\bullet \in \D_\qct(\FX)$ and $\CG^\bullet \in \D_\qct(\FU)$ a compact object. If $\alpha \colon \CG^\bullet \to \CF^\bullet |_\FU$ is a morphism in $\D_\qct(\FU)$ there exist compact objects ${\CG'}^\bullet \in \D_\qct(\FU)$ and 
$\widetilde{\CG}^\bullet \in \D_\qct(\FX)$, an isomorphism $\beta \colon   \widetilde{\CG}^\bullet |_\FU \iso \CG^\bullet \oplus {\CG'}^\bullet$ and a morphism $\widetilde{\alpha} \colon \widetilde{\CG}^\bullet \to \CF^\bullet$ in $\D_\qct(\FX)$ such that it extends $\alpha \circ \pi_1 \colon \CG^\bullet \oplus {\CG'}^\bullet \to \CF^\bullet |_\FU$, \ie there is an isomorphism $\beta \colon \widetilde{\CG}^\bullet |_\FU \to \CG^\bullet \oplus {\CG'}^\bullet$ such that the diagram
\begin{diagram}[height=2em,w=2.8em,p=0.3em,labelstyle=\scriptstyle]
\widetilde{\CG}^\bullet |_\FU     &                           &   \\
\dTo^{\beta}_{\wr}                & \rdTo^{\widetilde{\alpha} |_\FU}  & \\ 
\CG^\bullet \oplus {\CG'}^\bullet & \rTo^{\alpha \circ \pi_1} & \CF^\bullet |_\FU \\
\end{diagram}
commutes.
\end{lem}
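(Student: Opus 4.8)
\emph{Plan.} The statement is the formal--scheme counterpart of the gluing lemma of Thomason and Trobaugh (\cfr \cite[Lemma 5.5.1]{tt}), and the plan is to rerun its proof for quasi-coherent torsion complexes. Write $\FZ := \FX \setminus \FU$, let $j^{*}\colon \D_\qct(\FX) \to \D_\qct(\FU)$ be the restriction functor and $\R{}j_{*}$ its right adjoint; by \cite[Proposition 5.2.6, Corollary 5.2.11]{dfs} these preserve quasi-coherent torsion homology, so that $j^{*}\dashv\R{}j_{*}$ holds inside $\D_\qct$. The kernel of $j^{*}$ is the full subcategory $\D_{\qct,\FZ}(\FX) \subset \D_\qct(\FX)$ of complexes supported on $\FZ$; by Proposition \ref{fs-compsd} its objects that are compact in $\D_\qct(\FX)$ are exactly the perfect complexes supported on $\FZ$, and, affine--locally, it is generated by a Koszul complex on a system of equations of $\FZ$ (Proposition \ref{seisocho}). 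We shall use that $\D_\qct(\FX)$ is compactly generated and that $\D_{\qct,\FZ}(\FX)$ is the localizing subcategory generated by the perfect complexes supported on $\FZ$ (\cite{dfs}, \cite{AJSB}; the torsion analogue of the reasoning in \cite{tt}).

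\emph{Extending the object.} By the Thomason--Neeman description of the compact objects in a Verdier quotient by a localizing subcategory generated by compacts (applied to the localization $j^{*}$; see \cite[Lemma 2.2]{Ntty} and \cite{tt}), the compact object $\CG^{\bullet}$ of $\D_\qct(\FU)$ is a direct summand of $j^{*}\widetilde{\CG}_{0}^{\bullet}$ for some compact $\widetilde{\CG}_{0}^{\bullet} \in \D_\qct(\FX)$. Fix an isomorphism $\beta_{0}\colon \widetilde{\CG}_{0}^{\bullet}|_{\FU} \iso \CG^{\bullet}\oplus{\CG'}^{\bullet}$; then ${\CG'}^{\bullet}$, a direct summand of a compact object, is again compact in $\D_\qct(\FU)$.

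\emph{Extending the morphism.} Set $\psi := (\alpha\circ\pi_{1})\circ\beta_{0}\colon j^{*}\widetilde{\CG}_{0}^{\bullet}\to j^{*}\CF^{\bullet}$ and let $\widetilde{\psi}\colon\widetilde{\CG}_{0}^{\bullet}\to\R{}j_{*}j^{*}\CF^{\bullet}$ be its adjoint in $\D_\qct(\FX)$. Completing the unit map to a distinguished triangle
\[
\CF^{\bullet} \lto \R{}j_{*}j^{*}\CF^{\bullet} \lto C^{\bullet} \overset{+}{\lto}
\]
in $\D_\qct(\FX)$, we have $C^{\bullet}\in\D_{\qct,\FZ}(\FX)$, since $j^{*}$ makes the first arrow invertible ($j$ being an open immersion). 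The obstruction to lifting $\widetilde{\psi}$ along $\CF^{\bullet}\to\R{}j_{*}j^{*}\CF^{\bullet}$ is the composite $\gamma\colon\widetilde{\CG}_{0}^{\bullet}\xrightarrow{\widetilde{\psi}}\R{}j_{*}j^{*}\CF^{\bullet}\to C^{\bullet}$. Since $\widetilde{\CG}_{0}^{\bullet}$ is compact and $C^{\bullet}$ lies in the localizing subcategory generated by the perfect complexes supported on $\FZ$, $\gamma$ factors as $\widetilde{\CG}_{0}^{\bullet}\xrightarrow{a}\CR^{\bullet}\xrightarrow{b}C^{\bullet}$ with $\CR^{\bullet}$ perfect and supported on $\FZ$. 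Completing $a$ to a triangle
\[
\CR^{\bullet}[-1] \lto \widetilde{\CG}^{\bullet} \xrightarrow{\rho} \widetilde{\CG}_{0}^{\bullet} \xrightarrow{a} \CR^{\bullet},
\]
the complex $\widetilde{\CG}^{\bullet}\in\D_\qct(\FX)$ is compact (it sits between compact objects) and $\gamma\circ\rho = b\circ a\circ\rho = 0$, so $\widetilde{\psi}\circ\rho$ lifts to a morphism $\widetilde{\alpha}\colon\widetilde{\CG}^{\bullet}\to\CF^{\bullet}$. Because $j^{*}\CR^{\bullet}=0$, the last triangle shows $j^{*}\rho$ is an isomorphism; put $\beta := \beta_{0}\circ j^{*}\rho$. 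Applying $j^{*}$ to the relation defining $\widetilde{\alpha}$ and using that the counit of $j^{*}\dashv\R{}j_{*}$ is invertible for an open immersion gives $\widetilde{\alpha}|_{\FU} = \psi\circ j^{*}\rho = (\alpha\circ\pi_{1})\circ\beta$, which is exactly the asserted commutativity.

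\emph{Main obstacle.} The crux, and the point needing care in the formal setting, is that $j^{*}$ is a Verdier localization whose kernel $\D_{\qct,\FZ}(\FX)$ is generated by objects already compact in $\D_\qct(\FX)$ (perfect complexes supported on $\FZ$); both the object extension and the factorization of $\gamma$ pass through this. Establishing it for a closed set $\FZ$ with merely quasi-compact complement requires gluing the affine Koszul-complex description of Proposition \ref{seisocho} along a cover while keeping track of the torsion condition via \cite{dfs}; this is the ``extra twist'' over the ordinary-scheme argument of \cite{tt}.
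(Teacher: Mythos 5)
Your overall strategy -- realize $\CG^\bullet$ as a summand of the restriction of a compact object via the Thomason--Neeman description of compacts in a Verdier quotient, then kill the obstruction to lifting the adjoint map $\widetilde{\CG}_0^\bullet \to \R{}j_*j^*\CF^\bullet$ through the triangle $\CF^\bullet \to \R{}j_*j^*\CF^\bullet \to C^\bullet \overset{+}{\to}$ by factoring it through a perfect complex supported on $\FZ$ -- is formally coherent, and the diagram chase at the end is correct. But the argument rests entirely on inputs you do not establish: (i) that $\D_\qct(\FX)$ is compactly generated for a general noetherian formal scheme, and (ii) that $j^*$ exhibits $\D_\qct(\FU)$ as the Verdier quotient of $\D_\qct(\FX)$ by $\D_{\qct,\FZ}(\FX)$, with the kernel generated as a localizing subcategory by objects compact in $\D_\qct(\FX)$. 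Point (i) is circular in the context of this paper: global compact generation of $\D_\qct(\FX)$ is Proposition \ref{fs-cgenss}, whose proof \emph{uses} the present lemma; at this stage only the affine case (Proposition \ref{seisocho}) is available. Point (ii) is precisely the hard content of the lemma in the formal-scheme setting -- there is no off-the-shelf torsion analogue of the Thomason--Trobaugh localization theorem to cite -- and your closing paragraph concedes that establishing it ``requires gluing the affine Koszul-complex description along a cover,'' which is the step you never carry out. So the proposal identifies the crux but does not cross it.

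The paper's proof sidesteps both issues by inducting on the number of affine opens needed to cover $\FX$ beyond $\FU$. In the affine case it transports the problem along the equivalence $\kappa_* \colon \D_\qct(\spf(A)) \iso \D_I(A)$, notes that the subcategory of complexes supported on $\FZ = V(\ia)$ corresponds to $\D_{\ia}(A)$, which is generated by a single Koszul (hence compact) object, and then invokes Neeman's Theorem 2.1 of \cite{Ngd}, which already packages the ``summand plus extension of the morphism'' statement you are reproving by hand (with $\CG'^\bullet = \CG^\bullet[1]$). The inductive step glues the extension over $\FW_1$ to the given data over $\FU$ by a Mayer--Vietoris triangle for the cover $\FU \cup \FW_1$, checking compactness of the glued object locally. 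If you want to salvage your global approach, you would need to first prove (i) and (ii) for arbitrary noetherian formal schemes -- and the natural proof of those is again an induction on an affine cover, i.e., essentially the paper's argument.
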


\begin{proof}
Suppose first that $\FX$ is affine, \ie $\FX = \spf(A)$ where $A$ is an $I$-adic noetherian ring. Let $\kappa \colon \FX \to \spec(A)$ be the completion morphism. Put $\FZ := \FX \setminus \FU$ and $\ia \supset I$ be an open ideal of $A$ such that $\FZ =V(\ia)$. Let $\CJ := \kappa^*\widetilde{\ia}$ and let $\D_\CJ(\FX)$ be the full subcategory of $\D(\FX)$ whose objects are the complexes with $\CJ$-torsion homology \cite[Proposition 5.2.8]{dfs}. Set $\D_{\qct \CJ}(\FX) = \D_\CJ (\FX) \cap \D_{\qct}(\FX)$. The equivalence of categories $\kappa_* \colon \D_\qct(\FX) \to \D_I(A)$ restricts to an equivalence between $\D_{\qct \CJ}(\FX)$ and $\D_{\ia}(A)$. Thus the triangulated category $\D_{\qct \CJ}(\FX)$ is generated by a compact object. Applying \cite[Theorem 2.1]{Ngd} we obtain a compact object $\widetilde{\CG}^\bullet \in \D_\qct(\FX)$ and a morphism $\widetilde{\alpha} \colon \widetilde{\CG}^\bullet \to \CF^\bullet$ in $\D_\qct(\FX)$ that extends 
$\alpha \circ \pi_1 \colon \CG^\bullet \oplus {\CG}^\bullet[1] \to \CF^\bullet |_\FU$.

If $\FX$ is not affine, then let $\FX = \FU \cup \FW_1 \cup \dots \cup \FW_n$, where $n \geq 1$ and $\FW_1 \dots \FW_n$ are affine open subsets of $\FX$. By the affine case we know that there is a compact object $\CG^\bullet_1 \in \D_\qct(\FW_1)$ and a morphism $\alpha' \colon \CG^\bullet_1 \to \CF^\bullet|_{\FW_1}$ in $\D_\qct(\FW_1)$ that extends $\alpha \circ \pi_1|_{\FU \cap \FW_1}$ to $\FW_1$. Let $u \colon \FU \to \FU \cup \FW_1$, $w \colon \FW_1 \to \FU \cup \FW_1$ and
$j \colon \FU \cap \FW_1 \to \FU \cup \FW_1$ be the canonical inclusion maps. In this setting, we obtain a morphism of triangles
\begin{diagram}[height=2em,w=4em,labelstyle=\scriptstyle]
\widetilde{\CG}^\bullet_1&\rTo&\R{}u_*(\CG^\bullet \oplus {\CG}^\bullet[1])\oplus\R{}w_*\CG^\bullet_1&\rTo&\R{}j_*(j^*(\CG^\bullet \oplus {\CG}^\bullet[1]))&\rTo^+\\
\dTo^{\widetilde{\alpha}_1}    &&\dTo^{\text{ via } \alpha \circ \pi_1 \text{ and } \alpha' \text{ }}&&\dTo^{\text{via } \alpha \circ \pi_1}&\\
\CF^\bullet|_{\FU \cup \FW_1}&\rTo&\R{}u_*(\CF^\bullet|_\FU)\oplus\R{}w_*(\CF^\bullet|_{\FW_1})&\rTo&\R{}j_*(j^*\CF^\bullet)&\rTo^+
\end{diagram}
Since $\widetilde{\CG}^\bullet_1|_\FU \cong \CG^\bullet \oplus {\CG}^\bullet[1]$ and $\widetilde{\CG}^\bullet_1|_{\FW_1} \cong  \CG^\bullet_1$ then $\widetilde{\CG}^\bullet_1$ is a compact object in $\FU \cup \FW_1$. Furthermore $\widetilde{\alpha}_1$ extends $\alpha \circ \pi_1 \colon \CG^\bullet \oplus {\CG}^\bullet[1] \to \CF^\bullet |_\FU$ to $\FU \cup \FW_1$.

If $n = 1$ take $\widetilde{\alpha}_1 = \widetilde{\alpha}$. If $n > 1$, proceed as before in $n$ steps obtaining a compact object $\widetilde{\CG}^\bullet$ together with a morphism $\widetilde{\alpha} \colon \widetilde{\CG}^\bullet \to \CF^\bullet$ in $\D_\qct(\FX)$ satisfying the desired conditions.
\end{proof}

\begin{prop} \label{fs-cgenss}
Let $\FX$ be a noetherian formal scheme. The category $\D_\qct(\FX)$ is generated by compact objects.
\end{prop}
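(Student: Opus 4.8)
The plan is to argue by induction on $s(\FX)$, the least number of affine open subsets needed to cover $\FX$, exactly as in the proof of Proposition \ref{cgenss} for ordinary schemes. By \ref{genadd} it is enough to produce a set $\CS$ of compact objects of $\D_\qct(\FX)$, which we may take to be closed under translations, such that $\Hom_{\D_\qct(\FX)}(\CE^\bullet,\CF^\bullet)=0$ for all $\CE^\bullet\in\CS$ forces $\CF^\bullet\cong 0$; equivalently, one shows that a $\CF^\bullet\in\D_\qct(\FX)$ receiving no nonzero maps from compact objects must vanish. Throughout I would use that a complex in $\D_\qct(\FX)$ is zero precisely when its restriction to each member of an affine open covering is zero, since this is tested on homology. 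The base case $s(\FX)=1$, i.e.\ $\FX$ affine, is Proposition \ref{seisocho}.

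For the inductive step, suppose $s(\FX)=n>1$, fix an affine open covering $\FX=\FU_1\cup\dots\cup\FU_n$, and put $\FU:=\FU_1\cup\dots\cup\FU_{n-1}$ and $\FW:=\FU_n$, so that $s(\FU)\le n-1$, $s(\FW)=1$, and $\{\FU,\FW\}$ covers $\FX$; by induction $\D_\qct(\FU)$ and $\D_\qct(\FW)$ are generated by their compact objects. Given $\CF^\bullet\in\D_\qct(\FX)$ with $\Hom_{\D_\qct(\FX)}(\CE^\bullet,\CF^\bullet)=0$ for every compact $\CE^\bullet$, it suffices by the locality remark to prove $\CF^\bullet|_\FU\cong 0$ and $\CF^\bullet|_\FW\cong 0$. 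The key step is the following: if $\CG^\bullet\in\D_\qct(\FU)$ is compact and $\alpha\colon\CG^\bullet\to\CF^\bullet|_\FU$ is any morphism, then the preceding extension Lemma furnishes compact objects ${\CG'}^\bullet\in\D_\qct(\FU)$ and $\widetilde{\CG}^\bullet\in\D_\qct(\FX)$, an isomorphism $\beta\colon\widetilde{\CG}^\bullet|_\FU\iso\CG^\bullet\oplus{\CG'}^\bullet$, and a morphism $\widetilde{\alpha}\colon\widetilde{\CG}^\bullet\to\CF^\bullet$ with $\widetilde{\alpha}|_\FU=(\alpha\circ\pi_1)\circ\beta$; compactness of $\widetilde{\CG}^\bullet$ and the hypothesis on $\CF^\bullet$ give $\widetilde{\alpha}=0$, whence $\alpha\circ\pi_1=0$ (as $\beta$ is invertible) and so $\alpha=0$ after precomposition with the inclusion $\CG^\bullet\inc\CG^\bullet\oplus{\CG'}^\bullet$. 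Thus $\CF^\bullet|_\FU$ receives no nonzero map from a compact object of $\D_\qct(\FU)$, so $\CF^\bullet|_\FU\cong 0$ by the inductive hypothesis, and likewise $\CF^\bullet|_\FW\cong 0$; hence $\CF^\bullet\cong 0$.

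Since the real content --- Proposition \ref{seisocho} and, especially, the preceding extension Lemma --- is already in hand, the proposition reduces to this globalization argument, and I expect the main obstacle to be purely organizational: keeping the induction on $s(\FX)$ straight and checking that the extension Lemma's conclusion is correctly compatible with restriction to $\FU$. Equivalently, applying the extension Lemma to a compact generator of each $\D_\qct(\FU_i)$ and closing under translations yields an explicit, finite-up-to-shift, set of compact generators of $\D_\qct(\FX)$.
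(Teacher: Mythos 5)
Your argument is correct and rests on exactly the two ingredients the paper uses --- the affine case (Proposition \ref{seisocho}) and the preceding extension Lemma --- so it is essentially the paper's proof in contrapositive form; the paper dispenses with the induction on the number of affines entirely, arguing directly that a nonzero $\CF^\bullet \in \D_\qct(\FX)$ is nonzero on some affine open $\FU$, hence admits there a nonzero map from a compact object of $\D_\qct(\FU)$, which the Lemma promotes to a nonzero map from a compact object of $\D_\qct(\FX)$ (your inductive step could likewise be replaced by applying the Lemma to each affine $\FU_i$ separately). The one claim you should retract is the closing sentence: the compact object $\widetilde{\CG}^\bullet$ furnished by the extension Lemma depends on the morphism $\alpha$, not just on $\CG^\bullet$, so extending a single compact generator of each $\D_\qct(\FU_i)$ does not obviously produce a fixed finite generating set --- indeed a finite set of compact generators would make $\D_\qct(\FX)$ monogenic, which the paper only states as a conjecture in its final remark.
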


\begin{proof}
Let $\CF^\bullet \in \D_\qct(\FX)$, $\CF^\bullet \neq 0$. There exists an affine open subset $\FU \subset \FX$ such that $\CF^\bullet |_\FU \neq 0$. By Proposition \ref{seisocho}, $\D_\qct(\FU)$ is compactly generated, therefore there is a compact object $\CG^\bullet \in \D_\qct(\FU)$ together with a map $\alpha \colon \CG^\bullet \to \CF^\bullet |_\FU$ such that $\alpha \neq 0$. By the previous lemma $\alpha$ provides a non zero morphism $\widetilde{\alpha} \colon \widetilde{\CG}^\bullet \to \CF^\bullet$ in $\D_\qct(\FX)$ with $\widetilde{\CG}^\bullet$ a compact object.
\end{proof}

\begin{lem}
Let $\FX$ be a noetherian semi-separated formal scheme, $\CE^\bullet\in \D_\qct(\FX)$ and $\CF^\bullet \in \D(\FX)$. We have the following isomorphism:
\[
\dhom^\bullet_\FX(\CE^\bullet, \R\varGamma'_\FX\CF^\bullet) \liso
\dhom^\bullet_\FX(\CE^\bullet, \CF^\bullet).
\]
\end{lem}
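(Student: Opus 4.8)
The plan is to show that the displayed map is an isomorphism already \emph{before} applying the quasi-coherator. Unravelling the definition $\dhom^\bullet_\FX(\CE^\bullet,-)=\R{}Q^\ts_\FX\circ\R\shom^\bullet_{\CO_\FX}(\CE^\bullet,-)$, the map of the statement is obtained by applying this functor to the canonical morphism $\varepsilon_{\CF^\bullet}\colon\R\varGamma'_\FX\CF^\bullet\to\CF^\bullet$. Since $\R{}Q^\ts_\FX$ preserves isomorphisms, it suffices to prove that
\[
\R\shom^\bullet_{\CO_\FX}(\CE^\bullet,\varepsilon_{\CF^\bullet})\colon
\R\shom^\bullet_{\CO_\FX}(\CE^\bullet,\R\varGamma'_\FX\CF^\bullet)\lto
\R\shom^\bullet_{\CO_\FX}(\CE^\bullet,\CF^\bullet)
\]
is an isomorphism in $\D(\FX)$ whenever $\CE^\bullet\in\D_\qct(\FX)$ and $\CF^\bullet\in\D(\FX)$.

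Complete $\varepsilon_{\CF^\bullet}$ to a triangle $\R\varGamma'_\FX\CF^\bullet\to\CF^\bullet\to\CN^\bullet\overset{+}{\lto}$. Applying the idempotent functor $\R\varGamma'_\FX$ to it (idempotency of $\R\varGamma'_\FX$ is one of the Greenlees--May relations, see \cite{dfs}) yields $\R\varGamma'_\FX\CN^\bullet=0$. As $\R\shom^\bullet_{\CO_\FX}(\CE^\bullet,-)$ is a $\Delta$-functor, the problem reduces to the vanishing $\R\shom^\bullet_{\CO_\FX}(\CE^\bullet,\CN^\bullet)=0$ for all $\CE^\bullet\in\D_\qct(\FX)$ and all $\CN^\bullet\in\D(\FX)$ with $\R\varGamma'_\FX\CN^\bullet=0$. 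This assertion is local on $\FX$ (using that $\R\varGamma'_\FX$ localizes, \cite[Proposition 5.2.1(a)]{dfs}), so we may assume $\FX=\spf(A)$ with $A$ an $I$-adic noetherian ring. The full subcategory of those $\CE^\bullet\in\D_\qct(\FX)$ for which $\R\shom^\bullet_{\CO_\FX}(\CE^\bullet,\CN^\bullet)=0$ is triangulated and closed under coproducts (because $\R\shom^\bullet_{\CO_\FX}(-,\CN^\bullet)$ turns coproducts into products and a product of zero objects vanishes), hence localizing; since $\D_\qct(\FX)$ is the smallest localizing subcategory containing the Koszul complex $\CK^\bullet=\kappa^*\widetilde{K^\bullet}$ of Proposition \ref{seisocho} (see also \ref{strong}), it suffices to check the vanishing for $\CK^\bullet$.

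Now $\CK^\bullet$ is a perfect complex, so $\R\shom^\bullet_{\CO_\FX}(\CK^\bullet,\CN^\bullet)\cong(\CK^\bullet)^{\vee}\otimes^{\LL}_{\CO_\FX}\CN^\bullet$, where $(\CK^\bullet)^{\vee}:=\R\shom^\bullet_{\CO_\FX}(\CK^\bullet,\CO_\FX)$; by Koszul self-duality $(\CK^\bullet)^{\vee}$ is a shift of $\CK^\bullet$, in particular it lies in $\D_\qct(\FX)$. So it is enough to see that $M\otimes^{\LL}_{\CO_\FX}\CN^\bullet=0$ for every $M\in\D_\qct(\FX)$. This follows from the projection formula $\R\varGamma'_\FX(-)\cong\R\varGamma'_\FX\CO_\FX\otimes^{\LL}_{\CO_\FX}(-)$ (\cite{dfs}, \cite{AJL1}) together with $\R\varGamma'_\FX M\cong M$ for $M\in\D_\qct(\FX)$ (as already used in the proof of Theorem \ref{fs-asht2}): indeed, by associativity and commutativity of $\otimes^{\LL}_{\CO_\FX}$,
\[
M\otimes^{\LL}_{\CO_\FX}\CN^\bullet\cong
(\R\varGamma'_\FX\CO_\FX\otimes^{\LL}_{\CO_\FX}M)\otimes^{\LL}_{\CO_\FX}\CN^\bullet\cong
M\otimes^{\LL}_{\CO_\FX}(\R\varGamma'_\FX\CO_\FX\otimes^{\LL}_{\CO_\FX}\CN^\bullet)=
M\otimes^{\LL}_{\CO_\FX}\R\varGamma'_\FX\CN^\bullet=0.
\]
Hence $\R\shom^\bullet_{\CO_\FX}(\CK^\bullet,\CN^\bullet)=0$, which gives the required vanishing in general; applying $\R{}Q^\ts_\FX$ to $\R\shom^\bullet_{\CO_\FX}(\CE^\bullet,\varepsilon_{\CF^\bullet})$ then concludes the proof.

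The main point to get right is the bookkeeping around $\R\varGamma'_\FX$: that it is idempotent, that it is computed by tensoring with $\R\varGamma'_\FX\CO_\FX$, that it localizes, and that it restricts to the identity on $\D_\qct(\FX)$ --- all of which are in \cite{dfs}. A more streamlined alternative avoids the reduction to a generator entirely: test the map of the statement on $\Hom_{\D_\qct(\FX)}(\CG^\bullet,-)$ for arbitrary $\CG^\bullet\in\D_\qct(\FX)$, pass to $\D(\FX)$ via the adjunction $\mathbf{i}\dashv\R{}Q^\ts_\FX$, apply the tensor--hom adjunction in $\D(\FX)$, use $\CG^\bullet\otimes^{\LL}_{\CO_\FX}\CE^\bullet\in\D_\qct(\FX)$ (\ref{fs-equiv}), and conclude with the adjunction $\R\varGamma'_\FX\dashv\BL$ and the relation $\BL\R\varGamma'_\FX\cong\BL$ (\cite{dfs}); Yoneda in $\D_\qct(\FX)$ then finishes it.
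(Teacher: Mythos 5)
Your proof is correct, and its opening move is exactly the paper's: strip off $\R{}Q^\ts_\FX$ and reduce to showing that $\R\shom^\bullet_{\CO_\FX}(\CE^\bullet,\R\varGamma'_\FX\CF^\bullet)\to\R\shom^\bullet_{\CO_\FX}(\CE^\bullet,\CF^\bullet)$ is an isomorphism in $\D(\FX)$. At that point the paper simply stops and cites \cite[Corollary 5.2.3]{dfs}, whereas you go on to reprove that corollary from scratch: complete $\varepsilon_{\CF^\bullet}$ to a triangle, reduce to the vanishing of $\R\shom^\bullet_{\CO_\FX}(\CE^\bullet,\CN^\bullet)$ when $\R\varGamma'_\FX\CN^\bullet=0$, localize to $\spf(A)$, and run a localizing-subcategory argument off the compact Koszul generator of Proposition \ref{seisocho}, finishing with Koszul self-duality and the formula $\R\varGamma'_\FX(-)\cong\R\varGamma'_\FX\CO_\FX\otimes^{\LL}_{\CO_\FX}(-)$. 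All the steps check out: the class $\{\CE^\bullet:\R\shom^\bullet_{\CO_\FX}(\CE^\bullet,\CN^\bullet)=0\}$ is indeed closed under coproducts because $\R\shom^\bullet_{\CO_\FX}(-,\CN^\bullet)$ sends coproducts to \emph{derived} products (via the tensor--hom adjunction in $\D(\FX)$ and Yoneda), and $(\CK^\bullet)^\vee\cong\CK^\bullet[-n]$ does lie in $\D_\qct(\FX)$. What your version buys is self-containedness and a transparent explanation of \emph{why} torsion test objects cannot see the cone of $\varepsilon_{\CF^\bullet}$; what it costs is length and a reliance on the generation results of Section 6, which the lemma's placement in the paper (before Proposition \ref{seisocho} is needed for it) deliberately avoids by outsourcing the content to \cite{dfs}. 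Your sketched alternative via $\R\varGamma'_\FX\dashv\BL$ and $\BL\R\varGamma'_\FX\cong\BL$ is also viable and closer in spirit to how \cite{dfs} itself proves the cited corollary.
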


\begin{proof}
It is enough to check that there is an isomorphism
\[
\R\shom^\bullet_\FX(\CE^\bullet, \R\varGamma'_\FX\CF^\bullet) \liso
\R\shom^\bullet_\FX(\CE^\bullet, \CF^\bullet)
\]
and this follows at once from \cite[Corollary 5.2.3]{dfs}.
\end{proof}

\begin{prop} \label{fs-perfsd}
Let $\FX$ be a noetherian semi-separated formal scheme and $\CE^\bullet \in  \D_\qct(\FX)$ a perfect complex, then $\CE^\bullet$ is strongly dualizable.
\end{prop}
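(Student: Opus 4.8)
The plan is to mimic the proof of Proposition \ref{perfsd}, with the modifications forced by the fact that the unit object here is $\CO'_\FX = \R\varGamma'_\FX\CO_\FX$ rather than $\CO_\FX$ itself. Concretely, we must show that for every $\CG^\bullet \in \D_\qct(\FX)$ the canonical morphism
\[
\dhom^\bullet_\FX(\CE^\bullet, \CO'_\FX) \otimes^{\LL}_{\CO_\FX} \CG^\bullet \lto
\dhom^\bullet_\FX(\CE^\bullet, \CG^\bullet)
\]
is an isomorphism in $\D_\qct(\FX)$.

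First I would get rid of the quasi-coherator in the internal-hom. Since $\CE^\bullet$ is perfect and both $\CO'_\FX$ and $\CG^\bullet$ lie in $\D_\qct(\FX)$, the argument used in the proof of Proposition \ref{fs-compsd} (\ie arguing as in \cite[Theorem 2.4.1]{tt}, a local statement that reduces to bounded complexes of free finite-rank modules) shows that $\R\shom^\bullet_\FX(\CE^\bullet, \CO'_\FX)$ and $\R\shom^\bullet_\FX(\CE^\bullet, \CG^\bullet)$ already have quasi-coherent torsion homology. Hence, under the equivalence $\R{}Q^\ts_\FX \colon \D_\qct(\FX) \iso \D(\CA_\qct(\FX))$ recalled in \ref{fs-equiv}, we may suppress $\R{}Q^\ts_\FX$ and identify $\dhom^\bullet_\FX(\CE^\bullet, \CO'_\FX) \cong \R\shom^\bullet_\FX(\CE^\bullet, \CO'_\FX)$ and $\dhom^\bullet_\FX(\CE^\bullet, \CG^\bullet) \cong \R\shom^\bullet_\FX(\CE^\bullet, \CG^\bullet)$; a routine compatibility check shows that under these identifications the displayed map becomes the natural evaluation morphism
\[
\R\shom^\bullet_\FX(\CE^\bullet, \CO'_\FX) \otimes^{\LL}_{\CO_\FX} \CG^\bullet \lto
\R\shom^\bullet_\FX(\CE^\bullet, \CO'_\FX \otimes^{\LL}_{\CO_\FX}\CG^\bullet).
\]
By the unit property established in Theorem \ref{fs-asht2} (which uses \cite[Corollary (3.1.2)]{AJL1}) we have $\CO'_\FX \otimes^{\LL}_{\CO_\FX}\CG^\bullet \cong \CG^\bullet$, so the target is indeed $\R\shom^\bullet_\FX(\CE^\bullet, \CG^\bullet)$. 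It then remains to prove that this last morphism is an isomorphism in $\D(\FX)$, which is a local question: after taking suitable resolutions we may restrict to an open $\FV \subset \FX$ over which $\CE^\bullet|_\FV$ is a bounded complex of free finite-rank modules, and there the map is manifestly an isomorphism, exactly as in the closing lines of the proof of Proposition \ref{perfsd}.

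The part that needs the most care is the bookkeeping around the quasi-coherator $\R{}Q^\ts_\FX$ and the torsion functor $\R\varGamma'_\FX$, \ie justifying that $\R{}Q^\ts_\FX$ may be dropped from $\R\shom^\bullet_\FX(\CE^\bullet, -)$ when the second argument is a torsion complex; this rests on the torsion-stability statement above together with the equivalence of \ref{fs-equiv}. Alternatively, one can invoke the lemma preceding this proposition to replace $\CO'_\FX = \R\varGamma'_\FX\CO_\FX$ by $\CO_\FX$ inside $\dhom^\bullet_\FX(\CE^\bullet, -)$ and then use \cite[Corollary 5.2.3]{dfs} to commute $\R\varGamma'_\FX$ past the derived sheaf-hom. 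Beyond that identification everything is either formal (the closed-category manipulations) or a direct local computation, so no serious obstacle is anticipated.
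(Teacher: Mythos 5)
Your proposal is correct and follows essentially the same path as the paper's proof: drop $\R{}Q^\ts_\FX$ because perfection of $\CE^\bullet$ forces $\rshom^\bullet_\FX(\CE^\bullet,-)$ to land in $\D_\qct(\FX)$, dispose of the $\R\varGamma'_\FX$ twist on the unit (the paper uses exactly the ``alternative'' you mention, namely the preceding lemma resting on \cite[Corollary 5.2.3]{dfs}), and finish with the local check on bounded complexes of free finite-rank modules as at the end of Proposition \ref{perfsd}. Your main variant --- routing through the evaluation map into $\rshom^\bullet_\FX(\CE^\bullet,\CO'_\FX\otimes^{\LL}_{\CO_\FX}\CG^\bullet)$ and the unit isomorphism --- is only a formal rearrangement of the same argument.
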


\begin{proof}
Let $\CG^\bullet \in \D_\qct(\FX)$. We need to check that
\[
\dhom^\bullet_\FX(\CE^\bullet, \CO'_\FX) \otimes^{\LL}_{\CO_\FX} \CG^\bullet \lto
\dhom^\bullet_\FX(\CE^\bullet, \CG^\bullet)
\]
is an isomorphism. We have remarked in the proof of Theorem \ref{fs-asht2} that $\CO'_\FX = \R\varGamma'_\FX \CO_\FX$. The fact that $\CE^\bullet$ is perfect implies that $\rshom^\bullet_\FX(\CE^\bullet, \R\varGamma'_\FX \CO_\FX)$ and $\rshom^\bullet_\FX(\CE^\bullet, \CG^\bullet)$ belong to
$\D_\qct(\FX)$. Therefore we are reduced to prove that the canonical map
\[
\rshom^\bullet_\FX(\CE^\bullet, \R\varGamma'_\FX \CO_\FX) 
\otimes^{\LL}_{\CO_\FX} \CG^\bullet \lto
\rshom^\bullet_\FX(\CE^\bullet, \CG^\bullet)
\]
is an isomorphism in $\D(\FX)$. By the previous lemma we have the isomorphism
\[
\rshom^\bullet_\FX(\CE^\bullet, \R\varGamma'_\FX \CO_\FX) \liso
\rshom^\bullet_\FX(\CE^\bullet, \CO_\FX) 
\]
which reduces us to check that the canonical map
\[
\rshom^\bullet_\FX(\CE^\bullet, \CO_\FX) \otimes^{\LL}_{\CO_\FX} \CG^\bullet \lto
\rshom^\bullet_\FX(\CE^\bullet, \CG^\bullet)
\]
is an isomorphism in $\D(\FX)$. But this is a local problem and we can argue as at the end of the proof of Proposition \ref{perfsd}.
\end{proof}

\begin{thm}\label{fs-asht3} 
Let $\FX$ be a noetherian semi-separated formal scheme. The property (\ref{sdg}) holds in the category $\D_\qct(\FX)$. 
\end{thm}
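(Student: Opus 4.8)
The plan is to follow the blueprint established in Section~4 for ordinary schemes, adapting it to the torsion setting. Recall that property~(\ref{sdg}) requires a \emph{set} of strongly dualizable objects that generates $\D_\qct(\FX)$ in the strong sense of~\ref{strong}. The three ingredients needed are: (a) $\D_\qct(\FX)$ is generated by compact objects in the sense of~\ref{genadd}; (b) the compact objects are exactly the perfect complexes; (c) perfect complexes are strongly dualizable. Ingredient~(a) is Proposition~\ref{fs-cgenss}, ingredient~(b) is Proposition~\ref{fs-compsd}, and ingredient~(c) is Proposition~\ref{fs-perfsd}. So the proof is essentially an assembly of results already in hand.

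The one subtlety, compared with the scheme case, is the passage from ``generated by compact objects'' (which is about the vanishing criterion $\Hom(\CE^\bullet,M^\bullet)=0\ \forall\,\CE^\bullet \Rightarrow M^\bullet=0$) to ``generates in the strong sense'' (the smallest localizing subcategory containing them is everything). For this one invokes~\cite[Lemma 3.2]{Ngd} as recalled in~\ref{strong}: a triangulated category with coproducts that is generated by a \emph{set} of compact objects closed under suspension is generated by that set in the strong sense. One must check that $\D_\qct(\FX)$ genuinely has a \emph{set} of compact generators, not a proper class; this follows because $\FX$ is noetherian, hence quasi-compact, so it admits a finite affine open cover and the construction in Proposition~\ref{fs-cgenss} (extending, by the preceding lemma, the compact generators of each affine piece $\D_\qct(\FU_i)\cong\D_I(A_i)$ from Proposition~\ref{seisocho}) produces only set-many compact objects. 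Then one closes this set under all integer suspensions, which keeps it a set and keeps the objects compact.

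I expect no real obstacle here; the work has already been done in the preceding propositions, and the only thing to be careful about is not to conflate the two notions of generator. Here is the write-up.

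\begin{proof}
By Proposition~\ref{fs-cgenss} the category $\D_\qct(\FX)$ is generated, in the sense of~\ref{genadd}, by compact objects. Since $\FX$ is noetherian it is in particular quasi-compact, so it admits a finite affine open cover $\FX = \FU_1 \cup \dots \cup \FU_n$; combining Proposition~\ref{seisocho} with the lemma preceding Proposition~\ref{fs-cgenss}, the compact generators obtained in that proof form a \emph{set} $\CS_0$, not merely a class. Closing $\CS_0$ under all integer suspensions yields a set $\CS$ of compact objects (compactness is preserved by suspension, as $\Hom_{\D_\qct(\FX)}(\CE^\bullet[n], -) \cong \Hom_{\D_\qct(\FX)}(\CE^\bullet, (-)[-n])$ commutes with coproducts) which still generates $\D_\qct(\FX)$ in the sense of~\ref{genadd}. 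By~\cite[Lemma 3.2]{Ngd}, as recalled in~\ref{strong}, the set $\CS$ therefore generates $\D_\qct(\FX)$ in the strong sense: the smallest triangulated subcategory of $\D_\qct(\FX)$ closed under coproducts and containing $\CS$ is all of $\D_\qct(\FX)$.

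It remains to observe that the members of $\CS$ are strongly dualizable. By Proposition~\ref{fs-compsd} every compact object of $\D_\qct(\FX)$ is a perfect complex, and by Proposition~\ref{fs-perfsd} every perfect complex in $\D_\qct(\FX)$ is strongly dualizable (here we use that $\FX$ is semi-separated). Hence $\CS$ is a set of strongly dualizable generators of $\D_\qct(\FX)$, which is precisely property~(\ref{sdg}) of~\ref{axioms}.
\end{proof}
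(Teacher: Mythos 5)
Your proof is correct and follows exactly the same route as the paper's: Proposition \ref{fs-cgenss} for compact generation, the discussion in \ref{strong} (via \cite[Lemma 3.2]{Ngd}) to upgrade to generation in the strong sense, then Propositions \ref{fs-compsd} and \ref{fs-perfsd} to conclude that the compact generators are strongly dualizable. The extra care you take over the set-versus-class issue and closure under suspensions is a reasonable elaboration of hypotheses the paper leaves implicit, but it does not change the argument.
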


\begin{proof}
By Proposition \ref{fs-cgenss}, $\D_\qct(\FX)$ is compactly generated. It follows then from \ref{strong} that there exists a set of compact objects $\CS$ such that the smallest triangulated subcategory of $\D_\qct(\FX)$ closed for coproducts containing $\CS$ is the whole category. As compact objects are perfect complexes by Proposition \ref{fs-compsd} and perfect complexes are strongly dualizable by Propostion \ref{fs-perfsd}, the result follows.
\end{proof}

\begin{cor} 
Let $\FX$ be a noetherian semi-separated formal scheme. The category $\D_\qct(\FX)$ is a stable homotopy category in the sense of \cite{hps}. 
\end{cor}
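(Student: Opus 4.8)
The plan is to assemble the statement from the three theorems proved in this section, exactly as Corollary \ref{main} was obtained in the case of ordinary schemes. The five axioms of \ref{axioms} get distributed as follows: axioms (\ref{tri}), (\ref{cop}) and (\ref{brw}) come from Theorem \ref{fs-asht145}; axiom (\ref{scl}), the symmetric closed structure with unit object $\CO'_\FX = \R\varGamma'_\FX\CO_\FX$, is Theorem \ref{fs-asht2}; and axiom (\ref{sdg}), the existence of a system of strongly dualizable generators, is Theorem \ref{fs-asht3}. Since these exhaust all five conditions, the corollary follows immediately.

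The only bookkeeping point is that Theorem \ref{fs-asht145} is phrased for $\D(\CA_\qct(\FX))$ rather than for $\D_\qct(\FX)$ itself. Here I would invoke \ref{fs-equiv}: because $\FX$ is semi-separated, the functor $\R{}Q^\ts_\FX$ induces an equivalence of triangulated categories $\D_\qct(\FX) \iso \D(\CA_\qct(\FX))$, and this equivalence carries coproducts to coproducts, hence transports the triangulated structure, the existence of arbitrary coproducts, and Brown representability from one category to the other. Everything in Theorems \ref{fs-asht2} and \ref{fs-asht3} is already stated directly for $\D_\qct(\FX)$, so no further translation is needed.

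There is essentially no obstacle left to overcome at this stage: the genuine work was already carried out in Theorems \ref{fs-asht2} and \ref{fs-asht3} — namely, the construction of the internal hom $\dhom^\bullet_\FX$ together with the adjunction $-\otimes^{\LL}_{\CO_\FX}\CG \dashv \dhom^\bullet_\FX(\CG,-)$, and the identification of the compact objects of $\D_\qct(\FX)$ with the perfect complexes (Proposition \ref{fs-compsd}) together with the fact that perfect complexes are strongly dualizable (Proposition \ref{fs-perfsd}). The present corollary merely records their combination, the subtle features specific to the formal setting (the non-unitality, traced to $\CO'_\FX \neq \CO_\FX$ in general) being noted separately rather than affecting the verification of the axioms themselves.
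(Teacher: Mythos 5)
Your proposal is correct and follows exactly the paper's own argument: combine Theorem \ref{fs-asht145} with the equivalence $\D(\A_\qct(\FX)) \simeq \D_\qct(\FX)$ from \ref{fs-equiv} to obtain axioms (\ref{tri}), (\ref{cop}) and (\ref{brw}), and then cite Theorems \ref{fs-asht2} and \ref{fs-asht3} for the remaining two axioms. The bookkeeping point you single out (transporting the properties along the equivalence) is precisely the only step the paper's proof makes explicit as well.
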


\begin{proof}
By Theorem \ref{fs-asht145}, $\D(\A_\qct(\FX))$ satisfies the properties (\ref{tri}),  (\ref{cop}) and (\ref{brw}), but by \ref{fs-equiv} this category is equivalent to $\D_\qct(\FX)$, so it has the same properties. The rest of the conditions are dealt with in \ref{fs-asht2} and \ref{fs-asht3}.
\end{proof}

\begin{cor}
Let $\FX$ be a noetherian semi-separated formal scheme. The category $\D_\qct(\FX)$ is an \emph{algebraic} stable homotopy category in the sense of \cite{hps}. 
\end{cor}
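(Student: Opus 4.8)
The plan is to reduce this statement to the definition of \emph{algebraic} given in \cite[after Definition 1.1.4]{hps}: a stable homotopy category is algebraic when it admits a set of \emph{compact} (= small) generators. By the preceding Corollary we already know that $\D_\qct(\FX)$ is a stable homotopy category, so the only thing that remains is to observe that the generating set furnished by axiom (\ref{sdg}) can be chosen to consist of compact objects.

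First I would recall that, by Theorem \ref{fs-asht145}, $\D_\qct(\FX)$ is a triangulated category with arbitrary coproducts, and by Proposition \ref{fs-cgenss} it is generated, in the sense of \ref{genadd}, by a set $\CS$ of compact objects. As explained in \ref{strong} (whose argument applies verbatim here), after enlarging $\CS$ by all the suspensions of its members we obtain a set that generates $\D_\qct(\FX)$ in the strong sense, i.e.\ the smallest triangulated subcategory closed under coproducts that contains it is the whole of $\D_\qct(\FX)$. By Proposition \ref{fs-compsd} the members of $\CS$ are precisely the perfect complexes, and by Proposition \ref{fs-perfsd} such complexes are strongly dualizable; thus $\CS$ is simultaneously a set of strongly dualizable generators (recovering axiom (\ref{sdg})) and a set of compact generators, which is exactly what the adjective ``algebraic'' adds to the stable homotopy category axioms.

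The only point worth spelling out is the bookkeeping that the notion of compactness from \ref{compact} coincides with the smallness used in \cite{hps} and that the strongly dualizable generators produced in the proof of Theorem \ref{fs-asht3} are the compact ones — but this is immediate, since in that proof the generators \emph{were} the perfect complexes. I expect no real obstacle here: in contrast with the unital condition, which would require the tensor unit $\CO'_\FX = \R\varGamma'_\FX\CO_\FX$ itself to be compact (and this fails as soon as $\FX$ is not an ordinary scheme, since $\R\varGamma'_\FX\CO_\FX$ is then a genuinely non-perfect complex), being algebraic imposes a condition only on the generators, and these have already been identified with compact objects in the course of establishing Theorem \ref{fs-asht3}. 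Hence the proof is a short assembly of Theorem \ref{fs-asht145}, Proposition \ref{fs-cgenss}, Proposition \ref{fs-compsd} and Proposition \ref{fs-perfsd}, together with the remarks in \ref{strong}.
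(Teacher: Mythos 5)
Your argument is correct and is essentially the paper's own proof, which simply notes that ``algebraic'' means the generating set consists of compact objects and invokes Proposition \ref{fs-compsd} (together with the compact generation already established in Proposition \ref{fs-cgenss} and Theorem \ref{fs-asht3}). Your additional remarks on \ref{strong} and the contrast with the unital condition are accurate but not needed beyond what the paper records.
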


\begin{proof}
The adjective ``algebraic'' just means that the set of generators is compact which is true by Proposition \ref{fs-compsd}.
\end{proof}

\begin{rem}
Note that the category $\D_\qct(\FX)$ is seldom a \emph{unital} stable homotopy category. Let $\FX = \spf(K[[T]])$ where $K$ is a field. Then, in this case, $\CO'_\FX$ may be represented by the complex $\CO_\FX \to \CM_\FX$ in degrees 0 and 1 with $\CM_\FX := \kappa^*(K((T))^\sim)$. And this complex is not perfect because $\CH^1(\CO'_\FX) = \kappa^*((K((T))/K[[T]])^\sim)$ is not coherent.
\end{rem}

\begin{rem}
We conjecture that the methods similar to those of \cite[\S 2]{bb} and \cite[\S 4]{LN} can be extended to prove that for a semi-separated noetherian formal scheme $\FX$, the category $\D_\qct(\FX)$ is monogenic.
\end{rem}


\end{document}